\newtheorem{theorem}{Theorem}[section]
\newtheorem{lemma}[theorem]{Lemma}
\newtheorem{corollary}[theorem]{Corollary}
\newtheorem{proposition}[theorem]{Proposition}
\theoremstyle{definition}
\newtheorem{definition}[theorem]{Definition}
\newtheorem{example}[theorem]{Example}
\newtheorem{remark}[theorem]{Remark}
\newcommand{\Z}{\mathbb{Z}}
\newcommand{\N}{\mathbb{N}}
\newcommand{\C}{\mathbb{C}}
\newcommand{\R}{\mathbb{R}}
\newcommand{\T}{\mathbb{T}}
\newcommand{\I}{\mathcal{I}}
\newcommand{\F}{\mathcal{F}}
\newcommand{\J}{\mathcal{J}}
\renewcommand{\O}{\mathcal{O}}
\renewcommand{\H}{\mathcal{H}}
\newcommand{\dist}{\mathrm{dist}}
\renewcommand{\Re}{\text{Re}\,}
\renewcommand{\Im}{\text{Im}\,}
\DeclareMathOperator{\Arg}{Arg}
\DeclareMathOperator{\ran}{R}
\DeclareMathOperator{\Span}{Span}
\renewcommand{\ker}{\text{N}}
\begin{document}

\title{The Whitney method of fundamental solutions with Lusin wavelets}
\author{Jakob Jonsson$\,^1$ \and Andreas Ros\'en$\,^2$  \and Emil Timlin}

\thanks{$^1\,$Corresponding author. \\$\,^2$Formerly Andreas Axelsson. Supported by the Swedish Research Council (Grant 2022-03996).}

\address{Jakob Jonsson\\Mathematical Sciences, Chalmers University of Technology and University of Gothenburg\\
SE-412 96 G{\"o}teborg, Sweden}
\email{jakob.jonsson@chalmers.se}

\address{Andreas Ros\'en\\Mathematical Sciences, Chalmers University of Technology and University of Gothenburg\\
SE-412 96 G{\"o}teborg, Sweden}
\email{andreas.rosen@chalmers.se}

\address{Emil Timlin\\Mathematical Sciences, Chalmers University of Technology and University of Gothenburg\\
SE-412 96 G{\"o}teborg, Sweden}
\email{timlin@chalmers.se}
\keywords{Elliptic boundary value problem, frame, Hardy subspace, Lusin wavelet, method of fundamental solutions, Whitney covering}

\subjclass{42B37, 42C40,  65N12, 65N80}

\begin{abstract}
We establish the theoretical foundation for a variant of the method of fundamental solutions (MFS), where the source points $\{q_j\}_{j=1}^\infty$ accumulate towards the domain in a Whitney fashion, meaning that their separation is proportional to the distance to the domain.
We prove that the normalized Lusin wavelets $\psi_j(w) = b_j(w-q_j)^{-2}$ constitute a generalized basis, known as a frame, for the Hardy subspace of $L_2$-traces of holomorphic functions on the domain.
Consequently, our method, where $\psi_j$ are used as basis functions in the MFS, enables a numerically stable approximation of solutions to Laplace boundary value problems, even when the solutions lack analytic continuation across the boundary.
Despite the source points accumulating towards the domain, our computations achieve at least 12 digits of accuracy uniformly up to the boundary, including cases when the solution lacks analytic continuation or when the boundary has corners.
\end{abstract}
\maketitle

\section{Introduction}
\label{sec:intro}
In this paper, we provide the theory behind a version of the \emph{method of fundamental solutions}, in which source points $\{q_j\}_{j=1}^\infty$ are arranged in a Whitney fashion; that is, where the balls $B_j$, which are centered at $q_j$ and have radii proportional to the distance between $q_j$ and the boundary of the domain, constitute a uniformly locally finite cover of the complement of the domain.
This method provides a new constructive way to solve elliptic boundary value problems.
Additionally, we present numerical results demonstrating that this method outperforms the classical method of fundamental solutions for non-smooth data on Lipschitz domains.
To establish a basis for comparison in this introduction, we first survey the boundary integral equation method alongside the classical method of fundamental solutions.

The methods described here apply, suitably adjusted, to general elliptic boundary value problems on bounded domains in $\R^n$ for partial differential equations with constant coefficients.
As a model problem, we consider in this paper the interior Neumann problem for the Laplace equation in $\R^2 = \C$.
Given a bounded open domain $\Omega^+$ with boundary $\gamma$, and a real-valued function $g$ on $\gamma$, the interior Neumann problem is to find $u : \Omega^+ \to \R$ such that
\begin{equation}
    \label{eq:neumann_problem}
    \begin{cases}
        \Delta u = 0  \quad & \text{in } \Omega^+,\\
         \partial_\nu u = g & \text{on } \gamma,
    \end{cases}
\end{equation}
where $\partial_\nu u = \nu \cdot \nabla u$ is the normal derivative of $u$.
Below in~\eqref{eq:intro_cplx_neumann}, we shall reformulate~\eqref{eq:neumann_problem} as a boundary value problem for the (conjugate) gradient $f$ of $u$ in $\Omega^+$, which is in one-to-one correspondence with $u$, modulo constants. 
Our results, including numerical results, will be phrased in terms of $f$. 
However, as demonstrated in Examples~\ref{ex:smooth_data_u} and~\ref{ex:smooth_data_dirichlet}, our algorithm can easily be adapted to compute the potential $u$ itself.

One classical way to solve~\eqref{eq:neumann_problem} is the \emph{boundary integral equation} (BIE) method (see Kellogg~\cite[Ch.~XI]{Kellogg_1929}), using the \emph{single layer potential}
\begin{equation*}
    \widetilde Sh(z) = \frac{1}{2\pi}\int_\gamma \log|z-w| h(w)\,|dw|, \quad \quad z \in \Omega^+,
\end{equation*}
where $|dw|$ is the arc-length measure on $\gamma$, and $h$ is a real-valued function on $\gamma$.
By differentiating under the integral sign, it follows that $\widetilde Sh$ is harmonic in $\Omega^+$, so with the ansatz $ u = \widetilde Sh$, \eqref{eq:neumann_problem} reduces to finding $h : \gamma \to \R$ so that $\partial_\nu \widetilde Sh = g$ on $\gamma$. 
As the Neumann trace of the single layer potential is given by $\partial_\nu \widetilde S h|_\gamma = -\frac{1}{2}h + K^*h$,
where $K^*$ is the $L_2(\gamma)$-adjoint of the \emph{Neumann-Poincaré} operator
\begin{equation*}
    Kh(z) = \frac{1}{2\pi}\int_\gamma \nu(w)\cdot \nabla_w \log|z-w| h(w)\,|dw|, \quad \quad z \in \gamma,
\end{equation*}
it follows that $u = \widetilde Sh$ solves~\eqref{eq:neumann_problem} if $g = -\frac{1}{2}h+K^*h$.
A common numerical strategy for solving this boundary integral equation uses a Nyström discretization of $\gamma$, where integrals are approximated by sampling $\{w_m\}_{m=1}^M$ on $\gamma$, so that for $z \in \gamma \setminus \{w_m\}_{m=1}^M$,
\[\int_\gamma \nu(w)\cdot \nabla_w  \log|z-w| h(w)\,|dw| \approx  \sum_{m=1}^M  \nu(w_m)\cdot \nabla_w  \log|z-w_m| h(w_m) \alpha_m,\]
for some quadrature weights $\{\alpha_m\}_{m=1}^M$.
Using such a scheme, it is feasible to accurately find $h$ solving $g = -\frac{1}{2}h+K^*h$; see Atkinson~\cite[Ch.~4]{Atkinson_1997} for precise results.
Since $\widetilde S$ is bounded in the uniform topology,
computing $u = \widetilde Sh$ results in a high accuracy when $z$ is not too close the boundary $\gamma$, since then the interior error of $u$ is dominated by the error of $h$.
However, because the kernel of $\widetilde S$ has a singularity at the diagonal $w=z$, the numerical solution suffers from significant errors for points $z$ near the boundary.
This loss of accuracy near the boundary is the primary drawback of the BIE method.
While Helsing, Ojala~\cite{HelsingOjala_2008}, and Barnett~\cite{Barnett_2014}, among others, have proposed modifications to remedy this issue, it remains a problem in the basic algorithm.

To get around this problem, one idea is to push the singularities in the formula $u = \widetilde S h$ away from $\gamma$ and into $(\overline{\Omega^+})^c =: \Omega^-$.
A discrete version of such a representation is the basic idea of the \emph{method of fundamental solutions} (MFS);
see Cheng, Hong~\cite{ChengHong_2020} for an overview of the MFS.
While this method can be applied directly to~\eqref{eq:neumann_problem}, we first reformulate the second-order boundary value problem as a first-order problem to be compatible with the rest of the paper.
As $u$ is a harmonic function,  $f = (\partial_x  - i\partial_y) u$ is an analytic function whose boundary condition translates to $\Re(\nu f) = g$, where $\nu(w)$ is the complex outward pointing normal vector at $w \in \gamma$.
Thus~\eqref{eq:neumann_problem} is (up to a constant) equivalent to
\begin{equation}
    \label{eq:intro_cplx_neumann}
    \begin{cases}
    (\partial_x + i\partial_y)f = 0 & \text{in }\Omega^+, \\ \Re(\nu f) = g & \text{on }\gamma.
\end{cases}
\end{equation}
For the MFS, the idea is to approximate $f$ by a linear combination of fundamental solutions of the differential operator $\partial_x + i\partial_y$, that is,
\begin{equation}
    \label{eq:intro_mfs_approx}
    f(w) \approx \sum_{j=1}^n c_j \frac{1}{w-q_j} =:f_a(w),
\end{equation}
for some set of \emph{source points} $\{q_j\}_{j=1}^n \subset \Omega^-$.
The source points can be placed anywhere in $\Omega^-$, either adapted to $g$ or not, but it is common to place them uniformly along a curve $\gamma' \subset \Omega^-$ at a fixed, positive distance to $\gamma$.
As the approximation $f_a$ of $f$ is analytic in $\Omega^+$, it remains to choose the coefficients $(c_j)_{j=1}^n$ such that $\Re(\nu f_a) \approx g$.
A simple way to implement this is to enforce $\Re(\nu(w_m)f_a(w_m)) = g(w_m)$ at a number of \emph{collocation points} $\{w_m\}_{m=1}^M \subset \gamma$ via the system of equations
\begin{equation}
    \label{eq:intro_mfs_collocation}
    \sum_{j=1}^n \Re \Bigl(c_j \frac{\nu(w_m)}{w_m-q_j}\Bigr) = \Re(\nu(w_m)g(w_m)), \quad \quad 1 \le m \le M,
\end{equation}
where $M$ usually is selected so that the ratio $n/M$ is fixed.
One then solves for the complex coefficients $(c_j)_{j=1}^n$ in the real system~\eqref{eq:intro_mfs_collocation}. 
Unlike the BIE method, there are no singularities of this representation of $f$ close to the boundary, enabling simple and accurate computation of the solution at the boundary.
As the error function $f - f_a$ is analytic on $\Omega^+$, it follows from the maximum principle that the error in the domain is controlled by the maximum boundary error.
For domains with analytic boundaries and $f$ with an analytic continuation to a neighborhood of $\overline{\Omega^+}$, this boundary error decreases exponentially in the Sobolev $H^s(\gamma)$ norm, as proved 
by Katsurada~\cite[Thm.~4.1]{Katsurada_1990}.
However, if $f$ lacks such an analytic continuation, solving~\eqref{eq:intro_mfs_collocation} becomes problematic, as then the $\ell_2$ norm of the coefficients $(c_j)_{j=1}^n$ rapidly increases as $n \to \infty$; see Barnett and Betcke~\cite[Thm.~7]{Barnett_2008}.
This leads to large errors from floating point arithmetic when solving~\eqref{eq:intro_mfs_collocation}, which prevents errors from approaching machine epsilon.
The theoretical reason for this numerical instability is that the approximation in~\eqref{eq:intro_mfs_approx} is ill-posed in the sense that the matrix implicit in~\eqref{eq:intro_mfs_collocation} is ill-conditioned.
In fact, the condition number grows exponentially as $n \to \infty$, as was shown in, for instance, Kitagawa~\cite[Thm.~1.1(b)]{Kitagawa_1991}.

In this paper, we propose a variant of the MFS that we refer to as the \emph{Whitney method of fundamental solutions} (WMFS), where the source points $\{q_j\}_{j=1}^\infty$ constitute a \emph{Whitney set} in $\Omega^-$.
This means that the set $\{B(q_j, \varepsilon \dist(q_j, \gamma))\}_{j=1}^\infty$ is a uniformly locally finite cover of $\Omega^-_\delta :=  \{z \in \Omega^-_\delta\:; \dist(z, \gamma) < \delta\}$ for some $\delta > 0$, in the sense that every point $z \in \Omega^-_\delta$ intersects at most $N$ balls in the cover, where $\varepsilon \in (0, 1/2]$ and $N \in \Z_+$ are parameters independent of $z$. 
For an introduction to the closely related concept of a \emph{Whitney partition}, together with an application of the notion, we refer the reader to the book by Stein~\cite[Ch.~VI]{Stein_1971}.
In Example~\ref{ex:explicit_whitney_partition}, we give explicit examples of Whitney sets of source points, which, in particular, is depicted in Figure~\ref{fig:Ca}, although only a finite number of the source points are shown in the plot. 

A key property of Whitney sets of source points, and a main result of our paper,  is that using this allocation eliminates the problem of the ill-conditioned system in~\eqref{eq:intro_mfs_collocation}, in the following precise sense: 
given that $\varepsilon > 0$ is sufficiently small, every $f \in L_2^+(\gamma)$ (the complex Hardy subspace of $L_2(\gamma)$, consisting of traces of analytic functions) has an unconditionally $L_2(\gamma)$ convergent series expansion
\begin{equation}
    \label{eq:intro_lusin_whitney_expansion}
    f(w) = \sum_{j=1}^\infty c_j \frac{b_j}{(w-q_j)^{2}},
\end{equation}
with coefficients satisfying $\sum_{j=1}^\infty |c_j|^2 \simeq \|f\|^2$ and constants $b_j$ chosen so that $\psi_j(w)= b_j (w-q_j)^{-2}$ are normalized functions in $L_2(\gamma)$.
Following Lusin (see Meyer~\cite[Ch.~1]{Meyer_1993}), we use functions $(w - q_j)^{-2}$ in~\eqref{eq:intro_lusin_whitney_expansion} instead of $(w - q_j)^{-1}$ as in~\eqref{eq:intro_mfs_approx} to improve the localization of our "basis functions" $\psi_j$.
Thus, strictly speaking, $\psi_j$ are not fundamental solutions, but by abuse of terminology, we shall speak of the WMFS as a method of fundamental solutions.
Now, although the series in~\eqref{eq:intro_lusin_whitney_expansion} is reminiscent of a basis expansion, we show in Proposition~\ref{prop:no_lusin_basis} that $\{\psi_j\}_{j=1}^\infty$ cannot be a Schauder basis for $L_2^+(\gamma)$, regardless of what set of source points is used.
Therefore, the coefficients in~\eqref{eq:intro_lusin_whitney_expansion} are never unique. 
Despite this, the vectors $\{\psi_j\}_{j=1}^\infty$ retain many properties of bases.
For instance, by truncating the series in~\eqref{eq:intro_lusin_whitney_expansion}, it follows that \emph{every} $f \in L_2^+(\gamma)$ may be approximated by linear combinations of $\psi_j$, in such a way so that the coefficients $\{c_j\}_{j=1}^J$ are $\ell_2$ bounded as $J \to \infty$.
This contrasts the case of the classical MFS, where such an approximation with $\ell_2$ bounded coefficients is only possible for $f$ with an analytic continuation to a neighborhood of $\overline{\Omega^+}$ (see Barnett and Betcke~\cite[Thm.~7]{Barnett_2008}), leading to increased errors in floating point arithmetic for functions lacking such analytic continuations.
Thus, the WMFS provides a simple solution to the primary drawback of the MFS---numerical instability.

Somewhat surprisingly given the estimate $\sum_{j=1}^\infty |c_j|^2 \simeq \|f\|^2$ mentioned above, it turns out that truncating the series~\eqref{eq:intro_lusin_whitney_expansion} at the $M$:th term and sampling the Lusin wavelets at $M$ collocation points, and then enforcing the boundary condition as in~\eqref{eq:intro_mfs_collocation}, results in a system with a large condition number of the matrix; 
see Remark~\ref{rmk:CC-method} for a possible remedy to this issue. 
However, as has been observed by Barnett and Betcke~\cite{Barnett_2008}, the numerical stability of the MFS is governed not by the condition number, but by the coefficient norm $ |c|^2 = \sum_{j=1}^M |c_j|^2$, which must remain $\O(1)$ in $M$.
The minimum achievable error of the MFS is directly related to $|c|$ for $M$ large, since evaluating $f_a$ induces round-off errors of order $|c|\times (\text{machine epsilon})$. 
The condition number only implies an upper bound for $|c|$, which in many instances is a very crude bound. 
In Section~\ref{sec:numerical_results}, we are able in each example to choose source points from the Whitney covering in such a way that $|c|$ stays small and uniformly bounded as the number of source points increases, even when the solution lacks analytical continuation to a neighborhood of $\overline{\Omega^+}$ and when the domain has corners. 
Although we have not included any graphs illustrating this, the boundedness of $|c|$ can be inferred implicitly from the accuracy of the solutions.

The motivation for using a Whitney set as source points and using the derivatives $(w-q_j)^{-(k+1)}$ of the Cauchy kernel comes from wavelet theory.
To explain the connection, we briefly consider the domain $\C^+ = \{x+iy\:; y >0\}$ of the upper-half complex plane, where wavelets of the form $(z-q_j)^{-(k+1)}$, $k \ge 1$, were first studied by Lusin;  see Meyer~\cite[Ch.~1]{Meyer_1993} and references therein.
The idea behind wavelets is to construct a basis for a 
space, typically $L_2(\R^n)$,
such that the base consists of translates and scalings of a single \emph{mother wavelet} $\psi \in L_2(\R^n)$.
In our case, we construct a generalized basis for the Hardy subspace $L_2^+(\R)$ (the complex subspace of traces of analytic functions on $\C^+$), using the Lusin mother wavelet $\psi(x) = 1/(x+i)^{k+1}$, where $k \ge 1$ is necessary to ensure that $\psi$ has sufficient decay. 
Given the mother wavelet and fixed parameters $a>1, b>0$, one defines
\[\psi_{j,l}(x) = a^{-l/2}\psi(a^{-l}x-jb), \quad \quad j, l \in \Z,\]
which for $\psi(x) = 1/(x+i)^{k+1}$ yields \emph{Lusin wavelets}
\[\psi_{j,l}(x) = \frac{a^{lk+l/2}}{(x-q_{j,l})^{k+1}},\]
with source points $q_{j,l}= ja^lb - ia^l$.
Geometrically, $\{q_{j,l}\}_{j, l \in \Z}$ is a Whitney set for the lower-half complex plane $\C^-$ comprised of horizontal layers indexed by $l$, where each layer is a row of uniformly spaced source points, similar to the configuration in Figure~\ref{fig:Ca}.
As $a \to 1$ and $b \to 0$, the "density" of the source points increases.
Given that $(a,b)$ is sufficiently close to $(1,0)$, the collection $(\psi_{j,l})_{j,l \in \Z}$ forms a generalized basis for $L_2^+(\R)$, a \emph{frame}, in the sense that every $f \in L_2^+(\R)$ satisfies 
\begin{equation}
\label{eq:intro_frame_property}
    A\|f\|^2 \le \sum_{j=-\infty}^\infty \sum_{l=-\infty}^\infty |(f,\psi_{j,l})|^2 \le B\|f\|^2
\end{equation}
for some constants $0 < A, B < \infty$.
From~\eqref{eq:intro_frame_property}, it follows that $f$ has a series expansion
\begin{equation}
    \label{eq:intro_wavelet_expansion}
    f = \sum_{j=-\infty}^\infty \sum_{l=-\infty}^\infty c_{j,l}\psi_{j,l}
\end{equation}
with (non-unique) coefficients satisfying $\sum_{j, l \in \Z}|c_{j,l}|^2 \simeq \|f\|^2$ (see Daubechies~\cite[Prop.~3.3.2]{Daubechies_1992}).
One way to construct such a wavelet frame is to discretize its continuous counterpart, given by \emph{Calderón's formula} (see Meyer~\cite[Ch.~1, Eq.~(5.9)]{Meyer_1993})
\begin{equation}
    \label{eq:intro_calderons_formula}
    f(x) = C_{\psi}\int_0^\infty \int_{-\infty}^\infty (f, \psi_{(u,t)})\psi_{(u,t)}(x)\,du\frac{dt}{t},
\end{equation}
where $\psi_{(u,t)}(x) = t^{-1}\psi(t^{-1}(x-u))$.
The "frame coefficients" $(f, \psi_{(u,t)}) = \int_{-\infty}^\infty f(x)\psi_{(u,t)}(x)\,dx$ satisfy the isometry property
\begin{equation}
    \label{eq:intro_calderon_isometry}
    \|f\|^2 = C_\psi\int_0^\infty \int_{-\infty}^\infty |(f, \psi_{(u,t)})|^2\,du\frac{dt}{t},
\end{equation}
where $C_\psi = \int_0^\infty |\widehat \psi(\xi)|^2 \,\frac{d\xi}{\xi} < \infty$ if $k \ge 1$.
When $k= 0$, $C_\psi = \infty$, which is the reason we only consider $k \ge 1$.
The main theoretical result of this paper is Theorem~\ref{thm:lusin_frame}, which gives a generalization of~\eqref{eq:intro_frame_property} to Lipschitz domains with boundary $\gamma$.
The resulting set of Lusin wavelets, with source points placed according to a Whitney partition, then forms a frame for $L_2^+(\gamma)$, which implies~\eqref{eq:intro_lusin_whitney_expansion}.
We prove~\eqref{eq:intro_calderon_isometry} by reducing the statement to known square function estimates for the Cauchy integral operator on Lipschitz curves.
For Lipschitz domains, the equality (up to a multiplicative constant) in~\eqref{eq:intro_calderon_isometry} does not hold, so the exact statement in Theorem~\ref{thm:square_function_estimates} becomes (for $k=1$)
\begin{equation}
    \label{eq:intro_cauchy_sfe}
    \|f\|^2 \simeq \int_{\Omega_1^+}|F'(z)|^2d(z)\,dA(z) + \int_{\Omega_1^-}|F'(z)|^2d(z)\,dA(z) + |F(0)|^2,
\end{equation}
where $F(z) = \frac{1}{2\pi i}\int_\gamma\frac{f(w)}{w-z}\,dw$ is the Cauchy integral of $f$, $d(z) = \dist(z, \gamma)$, $\Omega_1^\pm = \{z \in \Omega^\pm\:;d(z)<1\}$, and $dA$ is the area measure on $\C$.
To prove~\eqref{eq:intro_frame_property}, we discretize~\eqref{eq:intro_cauchy_sfe} with an argument involving the Poincaré-Wirtinger inequality.
The discretization procedure leads to Theorem~\ref{thm:lusin_frame}, which states that
\[\|f\|^2 \simeq \sum_{j=1}^\infty |(f, \psi_j^+)|^2 + \sum_{j=1}^\infty |(f, \psi_j^-)|^2 + |(f, w^{-1})|^2,\]
where $\psi_j^\pm(w) = b_j^\pm(w-q_j^\mp)^{-2}$ are normalized Lusin wavelets with corresponding source points $\{q_j^\mp\}_{j=1}^\infty$ constituting Whitney sets in $\Omega_1^\mp$, respectively.
It follows that $\{\psi_j^+\}_{j=1}^\infty$ is a frame for $L_2^+(\gamma)$.

The paper is organized as follows. 
In Section~\ref{sec:sfe}, we survey the square function estimates needed for constructing a Lusin wavelet frame of $L_2^+(\gamma)$ and discuss the Hardy splitting of $L_2(\gamma)$ on Lipschitz curves $\gamma$. 
As this section is only meant as an outline, all the proofs of Section~\ref{sec:sfe} are relegated to Appendices~\ref{appendix:hardy_splitting} and~\ref{appendix:sfe}, where we derive the theorems from known results in the harmonic analysis literature.
In Section~\ref{sec:discrete_frames}, we discretize the square function estimates of Section~\ref{sec:sfe} to construct a discrete Lusin frame for $L_2(\gamma)$.
In Section~\ref{sec:bvp_theory}, the $L_2(\gamma)$-frame is readily converted to a frame for $L_2^+(\gamma)$.
We apply this frame to solve boundary value problems such as~\eqref{eq:intro_cplx_neumann} and show that this gives rise to a method of fundamental solutions as described above. 
Finally, we implement and give numerical results for the method in Section~\ref{sec:numerical_results}.
On the one hand, we find in particular that the WMFS gives high accuracy in the computed solution near the boundary, unlike the BIE method; see Figure~\ref{fig:Aa}.
On the other hand, we find that the WMFS achieves high accuracy also when the solution cannot be analytically continued across $\gamma$, unlike the classical MFS; see Figure~\ref{fig:square_conewmfs_vs_mfs}.

We have limited the numerical examples in Section~\ref{sec:numerical_results} to interior Neumann and Dirichlet problems.
However, exterior Neumann and Dirichlet problems can similarly be solved with the WMFS using exterior Lusin wavelets $\psi_j^-$, and more general transmission problems can be solved using both $\psi_j^+$ and $\psi_j^-$.
Beyond analytic functions in $\C$, we expect the WMFS to extend to both Helmholtz equations and higher dimensions. 
Further publications on this are planned.

\section{Square function estimates}
\label{sec:sfe}
In this section, we discuss square function estimates for the $L_2$ space of functions on a closed Lipschitz curve $\gamma$. 
Before stating the main theorem, we fix some notation. 
First, we let $\gamma = \{r_\gamma(\theta)e^{i\theta}\:;0 \le \theta \le 2\pi\}$ be a positively oriented curve, where $r_\gamma : [0,2\pi] \to (0, \infty)$ is an arbitrary periodic Lipschitz function. 
The bounded interior open domain enclosed by $\gamma$ is denoted $\Omega^+$, while $\Omega^- = (\overline{\Omega^+})^c$ is the exterior open domain; see Figure~\ref{fig:lipschitz_domain_notation}.
Second, we let $k \ge 1$ be an integer and fix $\delta \in (0,\infty]$.
Then, with $d(z) = \dist(z,\gamma)$, we define $\Omega^\pm_\delta = \{z \in \Omega^\pm \:;d(z) < \delta\}$. Occasionally, we also write $d(x,A) = \text{dist}(x,A)$ for an arbitrary set $A\subset \R^n$.
The area measure on $\C$ is denoted by $dA$ while $dw$ and $|dw|$ are the complex and arc-length measures on $\gamma$, respectively.
Finally, $(f, g) = \int_\gamma f(w)\overline{g(w)}\,|dw|$ is the canonical inner-product on $L_2(\gamma)$ with corresponding norm $\|\cdot\|=\|\cdot\|_2$. 
For a function $f \in L_2(\gamma)$, we write  $F(z) = \frac{1}{2\pi i}\int_\gamma \frac{f(w)}{w-z}\,dw$ for the Cauchy integral of $f$, where $z \in \C \setminus \gamma$.

\begin{theorem}[Square function estimates]
    \label{thm:square_function_estimates}
     Fix $\delta \in (0,\infty]$ and an integer $k \geq 1$. 
     Then, for $f \in L_2(\gamma)$, we have the estimate
\[\|f\|^2 \simeq \int_{\Omega^+_\delta \cup \Omega^-_\delta}|F^{(k)}(z)|^2\,d(z)^{2k-1}dA(z)  + \sum_{j=0}^{k-1}|F^{(j)}(0)|^2. \]
\end{theorem}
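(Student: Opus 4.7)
The plan is to reduce the bound to two one-sided square function estimates via the Hardy splitting, and then invoke known $L_2$ bounds for the Cauchy integral on Lipschitz curves. First I would decompose $f = f^+ + f^-$ along the direct sum $L_2(\gamma) = L_2^+(\gamma) \oplus L_2^-(\gamma)$; the fact that this is a bounded (though not orthogonal) splitting on a Lipschitz curve follows from the $L_2$ boundedness of the Cauchy singular integral, surveyed in Section~\ref{sec:sfe}. With $F$ the Cauchy integral of $f$, the Plemelj--Sokhotski formulas identify $f^+$ with the interior nontangential boundary trace of $F$ and $f^-$ with minus its exterior trace, and the splitting yields $\|f\|^2 \simeq \|f^+\|^2 + \|f^-\|^2$. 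It therefore suffices to prove the analogues of the theorem on each side separately.

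Second, I would target the one-sided estimates
\begin{align*}
\|f^+\|^2 &\simeq \int_{\Omega_\delta^+}|F^{(k)}(z)|^2\, d(z)^{2k-1}\,dA(z) + \sum_{j=0}^{k-1}|F^{(j)}(0)|^2, \\
\|f^-\|^2 &\simeq \int_{\Omega_\delta^-}|F^{(k)}(z)|^2\, d(z)^{2k-1}\,dA(z).
\end{align*}
The absence of the $F^{(j)}(0)$ terms on the exterior is natural: $F|_{\Omega^-}$ decays at infinity, so it can be recovered from $F^{(k)}|_{\Omega^-}$ by $k$-fold integration from $\infty$ with no constants of integration, whereas $F|_{\Omega^+}$ is determined by $F^{(k)}|_{\Omega^+}$ only modulo polynomials of degree $< k$, which are pinned down by the $k$ Taylor coefficients $F^{(j)}(0)$ (note $0 \in \Omega^+$ since $r_\gamma > 0$). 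The case $k=1$ is the classical Littlewood--Paley type square function estimate for the Cauchy integral on Lipschitz curves. For general $k$ I would proceed by induction: writing $F^{(k)} = (F^{(k-1)})'$ and integrating by parts against the weight $d(z)^{2k-1}$ trades two powers of $d(z)$ so as to reduce the level-$k$ square function to a level-$(k-1)$ one, with the interior boundary term contributing the additional $|F^{(k-1)}(0)|^2$. The truncation at $d(z) < \delta$ is harmless, since in $\Omega^-$ the region $\{d(z) \geq \delta\}$ contributes a finite amount controlled by $\|f\|^2$ via the polynomial decay of $F$ at infinity, and $\Omega^+$ is bounded.

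The main obstacle I anticipate is executing the higher-$k$ induction cleanly on a Lipschitz (rather than smooth) domain, where integration by parts requires careful cut-offs adapted to $d(z)$ and an approximation of $\gamma$. A more conceptual route, likely used in Appendix~\ref{appendix:sfe}, is to transfer the problem to the upper half-plane via a bi-Lipschitz change of variables, apply Calder\'on's formula for the continuous Lusin wavelet transform there, and transport the result back using the $L_2$ boundedness of the Cauchy integral on $\gamma$. Either way, the crux is to reduce matters to a harmonic-analytic input---the $L_2$ boundedness of the Cauchy singular integral on Lipschitz curves---rather than to perform a direct computation.
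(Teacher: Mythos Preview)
Your reduction via the Hardy splitting to one-sided estimates is correct, and the upper bound in each direction does follow by induction on $k$ through a Caccioppoli-type inequality (this is Lemma~\ref{lem:k+1_norm_less_than_k_norm} in the paper). The gap lies in the lower bound $\|f^\pm\|^2 \lesssim \int_{\Omega_\delta^\pm}|F^{(k)}|^2 d^{2k-1}\,dA + \cdots$, which is the substantive direction. Your integration-by-parts induction is not clearly executable on a Lipschitz domain: the weight $d(z)^{2k-1}$ is only Lipschitz, so two integrations by parts bring in the distributional $\Delta d$, and the term $|F^{(k-1)}(0)|^2$ cannot arise as a ``boundary term'' since $0$ is an interior point of $\Omega^+$, not on the boundary of the region of integration. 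Your alternative---bi-Lipschitz transfer to the half-plane followed by Calder\'on's formula---fails more basically: a bi-Lipschitz map does not preserve holomorphicity, so the pushed-forward $F$ is no longer analytic and the Lusin-wavelet Calder\'on formula does not apply to it.

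The paper's route is structurally different. Instead of flattening to a half-plane, it uses the polar-type parametrization $z = r_\gamma(\theta)e^{-t}e^{i\theta}$, under which Cauchy--Riemann becomes the evolution equation $(\partial_t + BD)f_t = 0$ on $L_2(\T)$; the bounded $H_\infty$ functional calculus of the bisectorial operator $BD$ (Theorem~\ref{thm:abstract_sfe}, which is the CMM-type input) then yields the square function estimate for $(z\partial_z)^k F$ directly for every $k$, with no induction. The passage from $(z\partial_z)^k F$ to $F^{(k)}$, and the emergence of the correction $\sum_{j=0}^{k-1}|F^{(j)}(0)|^2$, are handled by showing that the lower-order terms in the expansion $(z\partial_z)^k = z^k\partial_z^k + \text{(lower order)}$ define Hilbert--Schmidt operators (Lemma~\ref{lemma:compact_remainder_terms}), and then invoking Fredholm perturbation: the operator $f \mapsto \chi_{\Omega_\delta}F^{(k)}$ inherits closed range, and its kernel is identified as the polynomials of degree $<k$. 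This compactness/Fredholm argument---not boundary terms from integration by parts---is the mechanism that produces the Taylor-coefficient correction, and it is the key idea absent from your sketch.
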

In connection to the proof of Theorem~\ref{thm:square_function_estimates}, we find the classical result that $L_2(\gamma)$ has a topological splitting in terms of subspaces consisting of boundary values of holomorphic functions on $\Omega^+$ and $\Omega^-$. 
\begin{theorem}
    \label{thm:hardy_splitting}
    For $f \in L_2(\gamma)$ and $t \neq 0$, define $F_t(w) = F(e^{-t}w)$, $w \in \gamma$. 
    Then we have the following convergences in the $L_2(\gamma)$ norm:
    \begin{align*}
        F_t &\to f^+, \quad \quad t \to 0^+, \\
        -F_t &\to  f^-, \quad \quad t \to 0^-,
    \end{align*}
    and further that $P^\pm : L_2(\gamma) \to L_2(\gamma)$ defined by $P^\pm f = f^\pm$ are bounded, complementary (oblique) projections.
\end{theorem}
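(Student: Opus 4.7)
The plan is to identify $f^\pm$ as (suitably signed) non-tangential boundary values of the Cauchy integral $F$ from $\Omega^\pm$, and to derive both the $L_2(\gamma)$-convergence and the projection boundedness from the $L_2$-boundedness of the Cauchy singular integral operator on Lipschitz curves. Since $r_\gamma > 0$, the origin lies in $\Omega^+$ and $\Omega^+$ is star-shaped with respect to $0$; hence for $t>0$ the dilated curve $e^{-t}\gamma$ lies strictly inside $\Omega^+$, so $F_t(w)=F(e^{-t}w)$ captures the values of the holomorphic function $F|_{\Omega^+}$ at points slightly pulled inward, while for $t<0$ it records the values of $F|_{\Omega^-}$ at points slightly pushed outward.

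The main estimate is a uniform bound $\|F_t\|_{L_2(\gamma)} \lesssim \|f\|_{L_2(\gamma)}$ for $|t|$ small. For such $t$ the point $e^{-t}w$ lies in a fixed non-tangential cone at $w\in\gamma$, so $|F_t(w)| \le F^\ast(w)$, where $F^\ast$ denotes the two-sided non-tangential maximal function of $F$; the bound $\|F^\ast\|_{L_2(\gamma)} \lesssim \|f\|_{L_2(\gamma)}$ is equivalent to the $L_2$-boundedness of the principal value Cauchy integral on the Lipschitz curve $\gamma$, which is the celebrated theorem of Coifman, McIntosh and Meyer. For H\"older continuous $f$ on $\gamma$, the classical Plemelj-Sokhotski jump relations give the uniform pointwise limits
\begin{align*}
F_t(w) &\to \tfrac{1}{2}f(w) + \tfrac{1}{2\pi i}\,\mathrm{p.v.}\!\int_\gamma \tfrac{f(w')}{w'-w}\,dw' =: f^+(w) &&(t\to 0^+), \\
-F_t(w) &\to \tfrac{1}{2}f(w) - \tfrac{1}{2\pi i}\,\mathrm{p.v.}\!\int_\gamma \tfrac{f(w')}{w'-w}\,dw' =: f^-(w) &&(t\to 0^-),
\end{align*}
and combining these with the uniform $L_2$-bound and the density of H\"older functions in $L_2(\gamma)$ promotes the convergences to all $f \in L_2(\gamma)$.

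Setting $P^\pm f := f^\pm$, boundedness is immediate from the $L_2$-convergence. The formulas above yield $f^++f^-=f$ for smooth $f$ and hence, by density, for all $f\in L_2(\gamma)$, so $P^++P^- = I$ and the projections are complementary. Idempotence follows from the reproducing property of the Cauchy integral: the Cauchy integral $G$ of $g:=P^+f$ equals $F$ on $\Omega^+$ and vanishes on $\Omega^-$, because the Cauchy integral of $f^+$ reproduces $F|_{\Omega^+}$ in $\Omega^+$ and vanishes in $\Omega^-$ (Cauchy's formula for $F|_{\Omega^+}$), whereas the Cauchy integral of $f^-$ vanishes in $\Omega^+$ and reproduces $F|_{\Omega^-}$ in $\Omega^-$ (Cauchy's formula for $F|_{\Omega^-}$, which decays at infinity). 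Taking boundary values of $G$ gives $P^+g=g$ and $P^-g=0$, so $(P^+)^2 = P^+$; the argument for $P^-$ is symmetric. The principal obstacle is the uniform $L_2$-bound on $F_t$: on a Lipschitz curve this rests on the deep Coifman-McIntosh-Meyer theorem, while the remaining steps are standard Plemelj/density manipulations.
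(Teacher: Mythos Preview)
Your argument is correct and follows the classical singular-integral route: bound $F_t$ pointwise by the non-tangential maximal function, invoke Coifman--McIntosh--Meyer for $\|F^\ast\|_{L_2(\gamma)}\lesssim\|f\|_{L_2(\gamma)}$, verify the Plemelj limits on a dense class, and pass to the limit by density. The idempotence via the reproducing property of the Cauchy integral is also fine; you should perhaps make explicit that the vanishing of the Cauchy integral of $f^-$ in $\Omega^+$ (and of $f^+$ in $\Omega^-$) is justified by deforming $\gamma$ to $e^{-t}\gamma$ and using the already-established $L_2$ convergence $\pm F_t\to f^\pm$, but this is only a matter of detail. One small point: your claim that $e^{-t}w$ lies in a fixed non-tangential cone at $w$ uses that the radial Lipschitz parametrisation $\theta\mapsto r_\gamma(\theta)e^{i\theta}$ makes rays from the origin uniformly transversal to $\gamma$; this is true but worth stating.

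The paper takes a genuinely different, operator-theoretic route. After the change of variables $z=r_\gamma(\theta)e^{-t}e^{i\theta}$, the Cauchy--Riemann system becomes $(\partial_t+BD)f_t=0$ on $L_2(\T)$, where $D=-i\partial_\theta$ and $B$ is multiplication by a bounded accretive function. The paper then invokes the bounded $H_\infty$ functional calculus of the bisectorial operator $BD$ (which is itself equivalent to CMM) to define the spectral projections $E^\pm=\chi^\pm(BD)$ and the Cauchy semigroups $C_t^\pm=(e^{-t\lambda}\chi^\pm(\lambda))(BD)$ directly as bounded operators, and obtains convergence $C_t^\pm\to E^\pm$ from an abstract convergence lemma for the functional calculus. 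The identification of these abstract operators with the Cauchy integral is then a separate analyticity/uniqueness argument. What your approach buys is directness and fewer prerequisites for this theorem in isolation; what the paper's approach buys is a unified framework that simultaneously delivers the square function estimates of Theorem~\ref{thm:square_function_estimates}, since both theorems are read off from the same $H_\infty$ calculus.
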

As a consequence of Theorem~\ref{thm:hardy_splitting}, we get that $L_2(\gamma)$ decomposes into so-called Hardy subspaces.
We write $\ran (T)$ and  $\ker (T)$ to denote the range and nullspace of an operator $T$, respectively.
\begin{definition}[Hardy subspaces]\label{def:hardy_subspaces}
    The Hardy subspaces of $L_2(\gamma)$ for $\Omega^\pm$ are $L_2^\pm(\gamma) = R(P^\pm)$, where $P^\pm$ are the projections from Theorem~\ref{thm:hardy_splitting}. 
\end{definition}
So, by Theorem~\ref{thm:hardy_splitting}, we have a topological splitting
\begin{equation}
    \label{eq:hardy_splitting}
    L_2(\gamma) = L_2^+(\gamma) \oplus L_2^-(\gamma).
\end{equation}

\begin{figure}
     \centering
         \includegraphics[width=\textwidth]{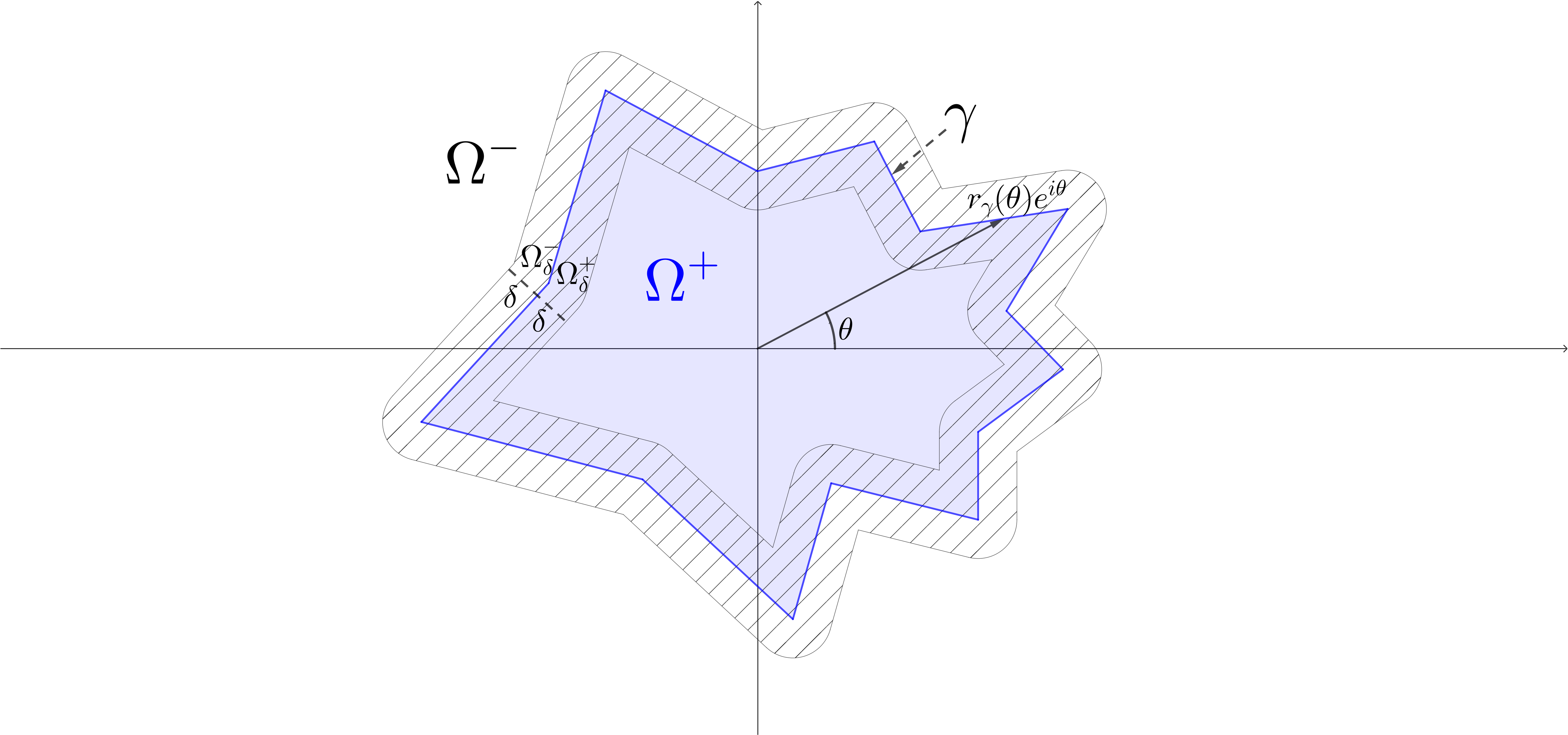}
         \caption{The Lipschitz domain $\Omega^+$.}
         \label{fig:lipschitz_domain_notation}
\end{figure}
The idea of the proof of Theorems~\ref{thm:square_function_estimates} and~\ref{thm:hardy_splitting} is to map the problem to the unit disk and to reduce it to known square function estimates and functional calculus of an infinitesimal generator $BD$ for the Cauchy-Riemann system.
For a function $f$ holomorphic on $\Omega^+$ or $\Omega^-$, we have that $f$ satisfies Cauchy-Riemann's equations
\[(\partial_x + i \partial_y)f(x,y) = 0.\]
Changing variables from $z = x+iy$ to $z = r_\gamma(\theta)e^{-t}e^{i \theta}$, we get by the chain rule that
\begin{align*}
    \partial_t &= -r_\gamma(\theta)e^{-t}(\cos \theta \,\partial_x + \sin \theta \,\partial_y), \\
    \partial_\theta &= e^{-t}((r_\gamma(\theta)\cos \theta)' \,\partial_x + (r_\gamma(\theta)\sin \theta)' \,\partial_y).
\end{align*}
Thus,  solving for $(\partial_x, \partial_y)$, we obtain that
\[\begin{pmatrix}
    \partial_x \\ \partial_y
\end{pmatrix} = A(t, \theta) \begin{pmatrix}
    (r_\gamma(\theta)\sin \theta)' & r_\gamma(\theta) \sin \theta \\ -(r_\gamma(\theta) \cos \theta)' & -r_\gamma(\theta)\cos \theta 
\end{pmatrix} \begin{pmatrix}
    \partial_t \\ \partial_\theta
\end{pmatrix}\]
for some non-zero coefficient $A(t, \theta)$. 
Therefore, in $(t, \theta)$-coordinates, Cauchy-Riemann's equations reads
\begin{equation}\label{eq:cauchy_riemann_t_theta}\biggl(\partial_t + \frac{r_\gamma(\theta)e^{i\theta}}{(r_\gamma(\theta)e^{i\theta})'}\partial_\theta \biggr)f(t,\theta) = 0.\end{equation}
Setting $f_t(\theta) = f(t,\theta)$, we get that $f_t \in L_2(\T)$, where $\T$ is the unit circle. 
Define the differential operator $D : f \mapsto -i \partial_\theta f$ and the multiplication operator $B : f \mapsto bf$, where $b(\theta) = i\frac{r_\gamma(\theta)e^{i\theta}}{(r_\gamma(\theta)e^{i\theta})'}$ is the coefficient from~\eqref{eq:cauchy_riemann_t_theta} times $i$. 
Then~\eqref{eq:cauchy_riemann_t_theta} is equivalent to
\begin{equation}\label{eq:t_theta_cauchy_riemann}
    (\partial_t + BD)f_t = 0.
\end{equation}
Let us discuss a few facts about the operator $BD$. 
\begin{proposition}
    \label{prop:bd_facts}
   The operator $BD : L_2(\T) \to L_2(\T)$ with the Sobolev space $H^1(\T)$ as domain is a densely defined, closed, unbounded operator,  and is \emph{bisectorial} in the sense that 
    \[\sigma(BD) \subset S_\omega := \{\lambda \in \C\:; |\Arg \lambda | \le \omega \text{ or }|\Arg (-\lambda)| \le \omega\}\cup \{0\}\]
    for $\omega = \sup_\theta |\Arg b(\theta)| \in  [0, \pi/2)$. 
    Further, $BD$ satisfies the resolvent estimates
    \begin{equation}
        \label{eq:resolvent_estimates}
        \bigl\|(\lambda - BD)^{-1} \bigr\| \lesssim \frac{1}{\dist(\lambda, S_\omega)}.
    \end{equation}
    The nullspace $\ker(BD)$ consists of the constant functions, and we have the topological direct sum
     \begin{equation}
        \label{eq:bd_range_kernel_splitting}
        L_2(\T) = \ker (BD) \oplus \ran (BD),
    \end{equation}
    where $\ran (BD)$ is closed.
\end{proposition}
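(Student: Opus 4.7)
The plan is to handle the structural claims (domain, closedness, kernel, range, direct sum) directly from the basic features of $D$ and $B$, and to devote the main work to the bisectoriality and resolvent estimate by analysing $(\lambda - BD)f = g$ as an explicit periodic first-order ODE. First I would record that $D = -i\partial_\theta$ with domain $H^1(\T)$ is the standard self-adjoint first-order operator on $L_2(\T)$, with $\ker D = \C$, closed range $\{g \in L_2(\T) : \int_\T g\,d\theta = 0\}$, and spectrum $\Z$. A direct calculation gives
\[ b(\theta) = \frac{r_\gamma^2 + i r_\gamma r_\gamma'}{(r_\gamma')^2 + r_\gamma^2}, \]
so $\Re b$ is bounded below away from zero and $|b|$ is bounded (using that $r_\gamma$ is Lipschitz and strictly positive); hence $B$ is a bounded, boundedly invertible multiplication operator with $\omega = \sup_\theta |\Arg b(\theta)| \in [0,\pi/2)$.

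Since $B$ is an isomorphism of $L_2(\T)$, $BD$ has the same dense domain $H^1(\T)$ as $D$ and is unbounded because $D$ is. For closedness: if $f_n \in H^1(\T)$ with $f_n \to f$ and $BDf_n \to g$ in $L_2(\T)$, then $Df_n = B^{-1}(BDf_n) \to B^{-1}g$, and closedness of $D$ forces $f \in H^1(\T)$ with $BDf = g$. Invertibility of $B$ also yields $\ker(BD) = \ker D = \C$ and $\ran(BD) = B\,\ran D$, closed of codimension $1$ in $L_2(\T)$. For the direct sum it remains to check $\C \cap \ran(BD) = \{0\}$: a constant $c$ lies in $\ran(BD)$ iff $c/b \in \ran D$ iff $\int_0^{2\pi} c/b(\theta)\,d\theta = 0$, and the winding-number identity $\int_0^{2\pi} d\theta/b(\theta) = -i \int_0^{2\pi} (\gamma'/\gamma)\,d\theta = 2\pi$ forces $c = 0$; combined with the codimension count this gives~\eqref{eq:bd_range_kernel_splitting}.

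For the spectrum, I would recast $(\lambda - BD)f = g$ as the periodic ODE $-i f'(\theta) = \lambda f(\theta)/b(\theta) + g(\theta)/b(\theta)$ on $\T$. Its integrating factor $\Phi(\theta) = \exp\bigl(i\lambda \int_0^\theta ds/b(s)\bigr)$ satisfies $\Phi(2\pi) = e^{2\pi i\lambda}$ by the same winding identity, so the homogeneous equation admits nontrivial periodic solutions exactly when $\lambda \in \Z$; hence $\sigma(BD) = \Z \subset \R \subset S_\omega$. Variation of parameters then gives an explicit formula for $(\lambda - BD)^{-1}g$ in terms of $\Phi$ and $(1 - e^{2\pi i\lambda})^{-1}$, which controls the resolvent on bounded regions avoiding the spectrum. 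To upgrade this to the sector-sharp bound~\eqref{eq:resolvent_estimates} I would combine it with an accretivity estimate obtained by testing the ODE against $B^{-1}f$: the pointwise bound $|\Arg(1/\overline{b(\theta)})| \leq \omega$ places $(B^{-1}f, f)$ in the closed sector of argument at most $\omega$, so that for $\lambda \notin S_\omega$ the quantity $\lambda (B^{-1}f, f) - (Df, f)$ is separated from $0$ proportionally to $\dist(\lambda, S_\omega)\,\|f\|^2$, which controls $\|f\|$ in terms of $\|g\|$ with the desired constant.

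The main obstacle I anticipate is gluing the two estimates into a single bound uniform across all $\lambda \notin S_\omega$: the explicit ODE expression is tight in any bounded region avoiding $\Z$, the accretivity estimate is sharp for $|\lambda|$ large, and some care is needed to ensure the implicit constant in~\eqref{eq:resolvent_estimates} does not degenerate as $\lambda$ approaches $\partial S_\omega$ at moderate modulus. A cleaner alternative would be to invoke the abstract framework of Axelsson–Keith–McIntosh for operators of the form $BD$ with $D$ self-adjoint with closed range and $B$ bounded and strictly accretive; its hypotheses are exactly met in our setting and its conclusions deliver both the spectral inclusion and the estimate~\eqref{eq:resolvent_estimates} directly.
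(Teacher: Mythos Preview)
Your proposal is correct and considerably more explicit than the paper's own treatment, which amounts to a two-line sketch citing Auscher--Axelsson--McIntosh for the abstract $BD$ framework. For the direct sum~\eqref{eq:bd_range_kernel_splitting} the paper argues by angle: since $D$ is self-adjoint one has $L_2(\T) = \ker D \oplus \ran D$, and accretivity of $B$ keeps the angle between $\ran D$ and $\ran(BD) = B\,\ran D$ below $\pi/2$, so the splitting persists with $\ker(BD)=\ker D$. Your winding-number computation $\int_0^{2\pi} d\theta/b = 2\pi$ reaches the same conclusion more concretely, and the same identity feeds your ODE analysis to pin down $\sigma(BD)=\Z$, which is sharper than the mere inclusion in $S_\omega$ that the proposition states. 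For the resolvent estimate both you and the paper invoke the same accretivity test against $B^{-1}u$: writing $(B^{-1}(\lambda-BD)u,u)=(B^{-1}f,u)$, the term $(Du,u)$ is real while $(B^{-1}u,u)$ sits in the closed $\omega$-sector with modulus $\gtrsim\|u\|^2$, so taking imaginary parts already yields $\|u\|\lesssim\|f\|/\dist(\lambda,S_\omega)$ uniformly for all $\lambda\notin S_\omega$. In particular your anticipated gluing with the variation-of-parameters formula is unnecessary; the explicit ODE solution buys you the exact spectrum but is not needed for~\eqref{eq:resolvent_estimates}.
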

These are well-known results for self-adjoint operators $D$ and accretive operators $B$; see for instance Auscher, Axelsson, and McIntosh~\cite[Prop.~3.3]{AAMc_2008}.
To get~\eqref{eq:resolvent_estimates}, consider $(\lambda-BD)^{-1}f=u$.
Then write $(B^{-1}(\lambda - BD)u,u) = (B^{-1}f,u)$ and evaluate the imaginary part using that $\inf_\theta \Re b(\theta) > 0$.
To get~\eqref{eq:bd_range_kernel_splitting}, note that $L_2(\T) = \ker (D) \oplus \ran (D)$ since $D$ is self-adjoint.
Now,~\eqref{eq:bd_range_kernel_splitting} follows since $N(BD) = N(D)$ and the angle between $\ran (D)$ and $\ran (BD) = B \ran (D)$ is less than $\pi/2$.

When $\Omega^+$ is the unit disk, corresponding to the case where $B=I$, then $BD = D = -i \partial_\theta$ has spectrum $\sigma(D) = \Z$. 
Using Fourier series and Parseval's identity, we have an isometry $\mathcal{F} : L_2(\T) \to \ell_2(\Z)$ given by the Fourier transform $\mathcal{F}$ which diagonalizes $D$, so that $\mathcal{F}D\mathcal{F}^{-1}$ is the multiplication operator $(\widehat f(k))_{k \in \Z} \mapsto (k \widehat f(k))_{k \in \Z}$ on $\ell_2(\Z)$.
Here, the Cauchy integral $f \mapsto F_t^\pm$ is the Fourier multiplier
\begin{equation}
    \label{eq:fourier_multiplier}
    \widehat f(k) \mapsto e^{-tk} \chi^\pm(k) \widehat f(k),
\end{equation}
where $\chi^+(k) = \chi_{\{k \ge 0\}}(k)$ and $\chi^-(k) = \chi_{\{k < 0\}}(k)$.
Theorems~\ref{thm:square_function_estimates} and~\ref{thm:hardy_splitting} can be proved using Fourier series in this case, as done in Jonsson and Timlin~\cite[Thm.~3.1.1, Thm.~2.2.5]{JonssonTimlin_2024}.

When $B \neq I$, it is well known in harmonic analysis that the generalization of Parseval's identity to non-self-adjoint operators $BD$ is a Littlewood-Paley type splitting into spectral subspaces associated with the dyadic parts
\[\{\lambda \in S_\omega\:; 2^j \le | \Re\lambda| < 2^{j+1}\}, \quad \quad j \in \Z,\]
of the bisector $S_\omega$.
The precise statement of such a Littlewood-Paley splitting is the square function estimates in Theorem~\ref{thm:square_function_estimates}.
From this one can construct bounded operators $e^{-tBD}\chi^\pm(BD)$ by Dunford functional calculus, which generalizes~\eqref{eq:fourier_multiplier}.
Using known estimates for the operator $BD$, we in this way prove Theorems~\ref{thm:square_function_estimates} and~\ref{thm:hardy_splitting} in Appendices~\ref{appendix:sfe} and~\ref{appendix:hardy_splitting}, respectively.

\section{Discrete Lusin frames}\label{sec:discrete_frames}
The goal of this section is to construct certain generalized bases for $L_2(\gamma)$ referred to as \emph{frames}.
\begin{definition}\label{def:frame}
    A set $\{f_j\}_{j=1}^\infty \subset H$ of vectors in some Hilbert space $H$ is a \emph{frame} if there exist constants $0 < A \le B < \infty$ so that
    \begin{equation}
        \label{eq:frame}
        A\|f\|_H^2 \le \sum_{j=1}^\infty|(f, f_j)_H|^2 \le B\|f\|_H^2, \quad \quad f \in H.
    \end{equation}
\end{definition}
Frames can be regarded as generalized bases that allow for linearly dependent basis vectors.
Despite the linear dependence, frames retain many of the useful properties of bases.
For instance, given a frame $\{f_j\}_{j=1}^\infty$, each $f \in H$ has an unconditionally convergent expansion
\begin{equation*}
    f = \sum_{j=1}^\infty c_j f_j.
\end{equation*}
In general, however, the coefficients are not unique. For a comprehensive introduction to frames, we refer the reader to the book by Christensen~\cite{Christensen_2016}.
In Sections~\ref{sec:bvp_theory} and~\ref{sec:numerical_results}, we use the frame constructed below in Theorem~\ref{thm:lusin_frame} to solve boundary value problems on the domain $\Omega^+$. 
These frame vectors, which we refer to as \emph{Lusin wavelets}, are normalized rational functions $w \mapsto b_j(w-q_j)^{-(k+1)}$, where $k\ge 1$ is an integer and $\{q_j\}_{j=1}^\infty \subset \C \setminus \gamma$ is a so-called \textit{Whitney set}.
\begin{definition}    
\label{def:whitney_set}
Let $A \subset \R^n$ be a closed set, $\O \subset A^c$, and $\varepsilon \leq 1/2$. We say that $\{x_j\}_{j=1}^\infty \subset A^c$ is an \textit{$\varepsilon$-fine Whitney set for $\O$ with respect to $A$}, if the following two properties hold:
\begin{enumerate}[font = \rmfamily, label = (\roman*)]
    \item $ \O \subset \bigcup_{j = 1}^\infty B_j$, where $B_j = B(x_j,\varepsilon d(x_j,A))$, \label{def:whitney_set_cond_1}
    \item $N := \sup_{x \in \R^n}\sum_{j = 1}^\infty \chi_{B_j}(x) < \infty$. \label{item:whitney_set_cond_2}
\end{enumerate}
We call $N$ the \textit{covering constant for $\{x_j\}_{j=1}^\infty$}. If we do not wish to emphasize the parameter $\varepsilon$, we omit the word "$\varepsilon$-fine".
\end{definition}
\begin{remark}
    As we show in Appendix~\ref{appendix:whitney_partition}, for any $A$ and $\O$ as in Definition~\ref{def:whitney_set}, there exists an $\varepsilon$-fine Whitney set for $\O$ with respect to $A$ for any $\varepsilon \in (0,1/2]$. Moreover, such a Whitney set can always be chosen so that the covering constant is controlled by a constant $C$ depending only on the dimension $n$. When $\O$ is bounded, this is an immediate consequence of Besicovitch's covering theorem; see Evans and Gariepy~\cite[Thm.~1.27]{EvansGariepy_2015}.
\end{remark}
In Section~\ref{sec:numerical_results}, we need explicit Whitney sets for $\Omega^\pm_\delta$ with respect to $\gamma$.
\begin{example}
    \label{ex:explicit_whitney_partition}
    Define $\rho : \C \to \C$ by $\rho(re^{i\theta}) = rr_\gamma(\theta)e^{i\theta}$, and let $\mathcal{L} = \max(\textup{Lip}(\rho),\textup{Lip}(\rho^{-1}))$. Fix $\varepsilon > 0$ such that $\varepsilon' := 3\mathcal{L}^2\varepsilon/2 \leq 1/4$, and let $n_l = \big\lceil 2\pi \varepsilon^{-1} (1+(1+\varepsilon)^l) \big\rceil$ for $l \geq 0$. For $l \geq 0$ and $1 \leq j \leq n_l$, let 
    \begin{equation*}
        r_l = 1 + \frac{1}{(1+\varepsilon)^{l}}, \quad 
        \theta_{j,l} = -\pi +  \frac{2\pi}{n_l}j,\quad \text{and} \quad q_{j,l} = r_l r_\gamma(\theta_{j,l})e^{i\theta_{j,l}}.
    \end{equation*}
    Then $\cup_{l = 0}^\infty \{q_{j,l}\}_{j=1}^{n_l}$ is an $\varepsilon'$-fine Whitney set for $\Omega^-_{1/\mathcal{L}}$ with respect to $\gamma$. This claim is verified in Appendix~\ref{appendix:whitney_partition}. Given any $\delta > 0$, one can remove a finite number $N_0 -1$ of layers $l$, so that $l \geq N_0$, to obtain $\varepsilon'$-fine Whitney sets for $\Omega_\delta^-$ whose source points lie at a distance to $\gamma$ that is $\lesssim$ $\delta$. Analogous statements hold for $\Omega^+_\delta$ if one instead defines $r_l = 1 - (1+\varepsilon)^{-l}$, $l \geq 1$. See Figures~\ref{fig:Aa},~\ref{fig:Ca}, and~\ref{fig:Cb} for examples of Whitney sets of this form.
\end{example}
Using Theorem~\ref{thm:square_function_estimates}, we show that when the source points $q_j^\pm$ for the Lusin wavelets are chosen so as to form a sufficiently fine Whitney set for $\Omega_\delta^\pm$ with respect to $\gamma$, the collection $(\psi_j^+)_{j=1}^\infty \cup (\psi_j^-)_{j=1}^\infty\cup (w^{-j})_{j=1}^k$ is a frame for $L_2(\gamma)$. 
\begin{theorem}[Lusin frame]
    \label{thm:lusin_frame}
    Let $k \geq 1$ and let $\delta \in (0,\infty]$. Consider $\varepsilon$-fine Whitney sets $\{q_j^\pm\}_{j = 1}^\infty \subset \Omega^\pm$ for $\Omega^\pm_\delta$ with respect to $\gamma$. Let $\psi_j^\pm (w)= b^\pm_j(w - q_j^\mp)^{-(k+1)}$ be the corresponding normalized Lusin wavelets. There exists a universal constant $\varepsilon_0>0$ such that if $\varepsilon \leq \varepsilon_0$, then  
    \begin{align*}
        \|f\|^2 \simeq \sum_{j=1}^\infty |(f, \psi_j^+)|^2 + \sum_{j=1}^\infty |(f, \psi_j^-)|^2 + \sum_{j=1}^{k}|(f, w^{-j})|^2,
    \end{align*}
    for all $f \in L_2(\gamma)$. 
\end{theorem}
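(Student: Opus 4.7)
The plan is to reduce Theorem \ref{thm:lusin_frame} to the continuous square function estimate of Theorem \ref{thm:square_function_estimates}, via a dualization and a Whitney-ball discretization. First, I would rewrite the Hermitian pairings $(f,\psi_j^\pm)$ as point values of Cauchy integral derivatives. With $T(w) = dw/|dw|$ the unit tangent and $g(w) = \overline{f(w)\,T(w)}$, we have $\|g\| = \|f\|$ since $|T|=1$. Let $G$ be the Cauchy integral of $g$. Using $|dw| = \overline{T(w)}\,dw$ and pulling the conjugation through, a direct computation verifies
\[(f,\psi_j^\pm) = -\overline{b_j^\pm}\,\frac{2\pi i}{k!}\,\overline{G^{(k)}(q_j^\mp)}, \qquad (f,w^{-j}) = -\frac{2\pi i}{(j-1)!}\,\overline{G^{(j-1)}(0)}.\]
Combined with the standard $L_2(\gamma)$-norm estimate $\|(w-q)^{-(k+1)}\|^2 \simeq d(q)^{-(2k+1)}$ (which forces $|b_j^\pm|^2 \simeq d(q_j^\mp)^{2k+1}$ from the normalization of $\psi_j^\pm$), these identities convert the claimed frame inequality into an equivalence between $\|f\|^2$ and
\[\sum_\pm\sum_j d(q_j^\mp)^{2k+1}|G^{(k)}(q_j^\mp)|^2 + \sum_{j=0}^{k-1}|G^{(j)}(0)|^2.\]
Theorem \ref{thm:square_function_estimates} applied to $g$ states exactly this equivalence once the discrete sums over $j$ are replaced by the area integrals $\int_{\Omega^\mp_\delta}|G^{(k)}(z)|^2 d(z)^{2k-1}dA(z)$.

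It then remains to discretize those area integrals by the Whitney balls $B_j^\mp$, which satisfy $|B_j^\mp|\simeq d(q_j^\mp)^2$, $d(z)\simeq d(q_j^\mp)$ on $B_j^\mp$, cover $\Omega_\delta^\mp$, and have bounded overlap. Because $G^{(k)}$ is holomorphic on $\Omega^\mp$, its squared modulus is subharmonic and its mean over $B_j^\mp$ equals $G^{(k)}(q_j^\mp)$. Subharmonicity immediately gives the upper direction $\sum_\pm\sum_j|(f,\psi_j^\pm)|^2 \lesssim \|f\|^2$. For the lower direction, I would apply the Poincaré--Wirtinger inequality to $G^{(k)}$ on each $B_j^\mp$ and use Cauchy--Riemann to pass from $\nabla G^{(k)}$ to $G^{(k+1)}$:
\[\int_{B_j^\mp}\bigl|G^{(k)}(z) - G^{(k)}(q_j^\mp)\bigr|^2\,dA(z) \lesssim \varepsilon^2 d(q_j^\mp)^2 \int_{B_j^\mp}|G^{(k+1)}|^2\,dA.\]
Multiplying by $d(q_j^\mp)^{2k-1}$, summing with bounded overlap, and invoking Theorem \ref{thm:square_function_estimates} at level $k+1$ to bound the residual by $\|f\|^2$, one arrives at
\[\|f\|^2 \lesssim \sum_\pm\sum_j|(f,\psi_j^\pm)|^2 + \sum_{j=1}^k |(f,w^{-j})|^2 + C\varepsilon^2\|f\|^2,\]
and any choice $\varepsilon_0 \le (2C)^{-1/2}$ absorbs the error term into the left-hand side.

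I expect the main obstacle to be the lower direction of the discretization: the Poincaré--Wirtinger remainder must be controlled by the square function at the higher level $k+1$ and then absorbed into $\|f\|^2$, which is precisely the mechanism producing the universal smallness requirement $\varepsilon \le \varepsilon_0$ appearing in the theorem. The dualization step is elementary but notationally delicate, since the Lusin frame uses the Hermitian inner product $(\cdot,\cdot)$ whereas the Cauchy kernel naturally pairs bilinearly against $dw$; the factor $T(w)$ and the complex conjugations must be tracked carefully throughout.
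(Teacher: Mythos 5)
Your overall mechanism is the same as the paper's: the dualization $g=\overline{fT}=\overline{i\nu f}$ turning $(f,\psi_j^\pm)$ into $|b_j^\pm||G^{(k)}(q_j^\mp)|$, the mean-value/Cauchy--Schwarz argument for the upper frame bound, and a Poincar\'e--Wirtinger plus absorption argument for the lower bound, all resting on Theorem~\ref{thm:square_function_estimates}. There is, however, a genuine gap with respect to the statement being proved: the theorem asserts a \emph{universal} threshold $\varepsilon_0$, whereas your absorption step does not deliver one. You bound the Poincar\'e--Wirtinger residual by $\varepsilon^2\sum_j d(q_j^\mp)^{2k+1}\int_{B_j}|G^{(k+1)}|^2\,dA$, pass to $\varepsilon^2 N\int_{\Omega^\mp}|G^{(k+1)}|^2 d(z)^{2k+1}dA$ using the bounded overlap, and then invoke Theorem~\ref{thm:square_function_estimates} at level $k+1$ to dominate this by $\varepsilon^2 N C\,\|f\|^2$ before absorbing into $\|f\|^2$. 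The constant you must beat therefore contains both the covering constant $N$ of the given Whitney set (which Definition~\ref{def:whitney_set} does not bound in terms of $\varepsilon$; it can be arbitrarily large) and the implicit constants of Theorem~\ref{thm:square_function_estimates} at levels $k$ and $k+1$ (which depend on the Lipschitz character of $\gamma$, on $\delta$ and on $k$). So your argument yields $\varepsilon_0=\varepsilon_0(\gamma,k,\delta,N)$, a strictly weaker conclusion. The paper is structured precisely to avoid this: it first extracts a sub-Whitney set with an \emph{absolute} covering bound (Lemma~\ref{lem:extracting_sub_whitney_sets}), and then closes the absorption \emph{inside the discrete sums}, comparing $\sum_j\|F^{(k)}\|^2_{B_j',k}$ with $\sum_j|a_jF^{(k)}(q_j^+)|^2$ and controlling the error by $\sum_j\|F^{(k+1)}\|^2_{B_j',k+1}\lesssim\|F^{(k)}\|^2_{\Omega^+,k}\le\sum_j\|F^{(k)}\|^2_{B_j',k}$ via the Caccioppoli-type Lemma~\ref{lem:k+1_norm_less_than_k_norm}, whose constant is universal; the $\gamma$-dependent square function constants only enter multiplicatively afterwards, never in the smallness condition on $\varepsilon$.

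Two further remarks. First, the chain $\|F^{(k)}\|^2_{\Omega^+,k}\le\sum_j\|F^{(k)}\|^2_{B_j',k}$ in the paper requires the balls to cover \emph{all} of $\Omega^+$, which is why the paper proves the case $\delta=\infty$ first and then reduces $\delta<\infty$ to it by completing the Whitney set and removing the added wavelets (Lemma~\ref{lem:removal_of_lusin_wavelets_at_a_positive_distance_from_gamma}, resting on Proposition~\ref{prop:no_lusin_basis}). Your route, by absorbing against $\|f\|^2$ through Theorem~\ref{thm:square_function_estimates} with the given $\delta$, handles all $\delta\in(0,\infty]$ in one stroke and would bypass that machinery---but only at the price of the non-universal $\varepsilon_0$ just described; if you want the theorem as stated, you need the paper's two devices (sub-Whitney extraction with absolute covering bound, and Caccioppoli-based absorption within the sums) or some substitute for them. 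Second, a small point: your absorption also tacitly assumes the Whitney balls for $\Omega^\pm_\delta$ lie in a collar of width comparable to $\delta$; since the definition does not force $d(q_j^\mp)\lesssim\delta$, you should either use the $\delta=\infty$ form of Theorem~\ref{thm:square_function_estimates} for the residual term or note this explicitly.
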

The proof of Theorem~\ref{thm:lusin_frame} will be carried out in two stages. After proving Lemma~\ref{lem:k+1_norm_less_than_k_norm} below, we give a proof of the special case when $\delta = \infty$. We then introduce two auxiliary results related to truncating a collection of Lusin wavelets, enabling us to reduce the case $\delta < \infty$ to $\delta = \infty$. 
\begin{lemma}\label{lem:k+1_norm_less_than_k_norm}
    Let $\O \subset \C$ be any open set, let $k \geq 1$, and let $G$ be analytic in $\O$. There exists a universal constant $C$ such that 
    \begin{align*}
        \int_{\O}|G^{(k+1)}(z)|^2d(z)^{2k+1}\,dA(z) \leq C \int_{\O}|G^{(k)}(z)|^2d(z)^{2k-1}\,dA(z),
    \end{align*}
    where $d(z) = \textup{dist}(z,\partial\O)$.
\end{lemma}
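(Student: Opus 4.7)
I propose to prove the inequality by combining a pointwise Cauchy-type estimate with a Fubini exchange. Since $G^{(k)}$ is analytic on $\O$, Cauchy's integral formula will give pointwise control of $|G^{(k+1)}(z)|^2$ by an $L_2$-average of $|G^{(k)}|^2$ over a disk centered at $z$ whose radius is a fixed fraction of the Whitney scale $d(z)$. Concretely, set $R = d(z)/4$ so that $B(z,2R)\subset \O$. For each $\rho \in [R,2R]$ I would write $G^{(k+1)}(z) = \frac{1}{2\pi i}\int_{\partial B(z,\rho)}G^{(k)}(\zeta)(\zeta-z)^{-2}\,d\zeta$, multiply by $\rho$, and integrate over $\rho$ so that the contour integrals assemble into an area integral on the annulus $A(z,R,2R)$. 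An application of Cauchy-Schwarz then yields a universal constant $C$ for which
\[|G^{(k+1)}(z)|^2 \leq \frac{C}{d(z)^4}\int_{B(z,\, d(z)/2)}|G^{(k)}(w)|^2\,dA(w).\]

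Next I would multiply both sides by $d(z)^{2k+1}$, integrate over $\O$, and swap the order of integration by Fubini's theorem to obtain
\[\int_{\O}|G^{(k+1)}(z)|^2 d(z)^{2k+1}\,dA(z) \leq C\int_{\O}|G^{(k)}(w)|^2 \left(\int_{\{z\,:\,|z-w|<d(z)/2\}}d(z)^{2k-3}\,dA(z)\right)dA(w).\]
The inner integral is precisely where the weights match. On the set $\{z : |z-w|<d(z)/2\}$, the triangle inequality gives $d(w) \geq d(z) - |z-w| > d(z)/2$ and $d(z) \geq d(w) - |z-w| \geq d(w) - d(z)/2$, so that $\tfrac{2}{3}d(w) \leq d(z) \leq 2d(w)$; in particular $d(z)\simeq d(w)$ with constants depending only on $k$, and moreover $|z-w| < d(z)/2 \leq d(w)$. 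Hence the $z$-section is contained in $B(w,d(w))$, which has area $\pi d(w)^2$, and the inner integral is bounded by $C_k\, d(w)^{2k-3}\cdot d(w)^{2} = C_k\, d(w)^{2k-1}$. Substituting back into the previous display yields the claimed estimate.

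\textbf{Main obstacle.} The argument is mostly bookkeeping once the correct Whitney radius is chosen, but the precise choice of radius is essential. The factor $1/2$ in the disk $B(z,d(z)/2)$ forces $d(w) > d(z)/2$, which in turn gives the two-sided comparability $d(z)\simeq d(w)$ needed to pull the weight $d(z)^{2k-3}$ outside the inner integral; had one naively used the full Cauchy disk $B(z,d(z))$, the comparability would break down and the $z$-section could be unbounded when $\O$ is unbounded, producing divergent inner integrals for $k\geq 2$. The only other point that requires any care is verifying that the circle-to-area conversion in Step~1 really does give a constant independent of $\O$ and $G$, which it does because the averaging in $\rho$ is carried out over a fixed dyadic window $[R,2R]$.
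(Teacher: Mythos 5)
Your proposal is correct, and it takes a genuinely different route from the paper. The paper's proof is a covering argument: it invokes Lemma~\ref{lem:existence_of_whitney_sets} to produce a $1/8$-fine Whitney set of balls $B_j \subset B_j'$ with uniformly bounded overlap, applies Caccioppoli's inequality for analytic functions to $G^{(k)}$ on each pair $(B_j, B_j')$, uses $d(z)\simeq d(z_j)$ on $B_j'$ to freeze the weight, and sums using the overlap bound. You instead prove a pointwise estimate $|G^{(k+1)}(z)|^2 \lesssim d(z)^{-4}\int_{B(z,d(z)/2)}|G^{(k)}|^2\,dA$ directly from Cauchy's formula for the derivative (averaged over radii in $[d(z)/4,d(z)/2]$ and followed by Cauchy--Schwarz), and then globalize by Tonelli, bounding the fiber integral $\int_{\{|z-w|<d(z)/2\}} d(z)^{2k-3}\,dA(z) \lesssim d(w)^{2k-1}$ via the two-sided comparability $\tfrac{2}{3}d(w)\le d(z)\le 2d(w)$ and the containment of the fiber in $B(w,d(w))$; all of these steps are sound, and your warning about the factor $1/2$ in the Cauchy disk is exactly the point that makes the fiber estimate work. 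What each approach buys: yours is self-contained and more elementary, needing neither the existence of Whitney sets nor a Caccioppoli reference, while the paper's version recycles the Whitney-covering machinery it must develop anyway for Theorem~\ref{thm:lusin_frame} and mirrors the structure of the discretization argument there. One honest caveat applies equally to both proofs: because a fixed dilation factor gets raised to the power $\sim 2k$ (your $2^{2k-3}$ from $d(z)\le 2d(w)$, the paper's $(9/8)^{2k+1}(4/3)^{2k-1}$ from $d(z)\simeq d(z_j)$), the resulting constant actually grows like $C^k$ rather than being independent of $k$; since the paper's own proof has the same feature and $k$ is a fixed parameter in all applications, this is not a gap relative to the paper, but you should state that your constant depends on $k$ (or shrink the Cauchy radius with $k$ if $k$-uniformity were ever needed).
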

\begin{proof}
We apply Lemma~\ref{lem:existence_of_whitney_sets} with $\varepsilon = 1/8$ to obtain a $1/8$-fine Whitney set $\{z_j\}_{j = 1}^\infty \subset \O$ for $\O$ with respect $\partial \O$, such that $\sum_{j = 1}^\infty\chi_{B_j'}(z) \leq 4800$ for all $z \in \C$, where $B_j' = B(z_j,\frac{1}{4}d(z_j))$. Let also $B_j = B(z_j,\frac{1}{8}d(z_j))$. Note that whenever $z \in B_j'$, we have
\begin{equation*}
    \frac{3}{4}d(z_j) < d(z_j) - |z-z_j| \leq d(z) \leq d(z_j) + |z-z_j| < \frac{5}{4}d(z_j).
\end{equation*}
Using Caccioppoli's inequality for analytic functions applied to $G^{(k)}$ and the balls $B_j$, $B_j'$ (see Giaquinta and Martinazzi~\cite[Thm~4.1]{Giaquinta_2012}), we estimate
\begin{multline*}
    \int_{\O}|G^{(k+1)}(z)|^2d(z)^{2k+1}\,dA(z) 
    \leq \sum_{j = 1}^\infty \int_{B_j}|G^{(k+1)}(z)|^2d(z)^{2k+1}\,dA(z) \\
     \simeq \sum_{j = 1}^\infty d(z_j)^{2k+1}\int_{B_j}|G^{(k+1)}(z)|^2\,dA(z) \\
    \lesssim \sum_{j = 1}^\infty d(z_j)^{2k+1} \bigg(\frac{1}{4}d(z_j) - \frac{1}{8}d(z_j)\bigg)^{-2} \int_{B'_j}|G^{(k)}(z)|^2\,dA(z) \\
     \simeq \sum_{j = 1}^\infty \int_{B'_j}|G^{(k)}(z)|^2d(z)^{2k-1}\,dA(z) 
     \lesssim \int_{\O}|G^{(k)}(z)|^2d(z)^{2k-1}\,dA(z). 
\end{multline*}
The constant implicit in each estimate is universal.
\end{proof}
\begin{proof}[Proof of Theorem~\ref{thm:lusin_frame}, case $\delta = \infty$]
    The idea of the proof is to discretize the integrals in Theorem~\ref{thm:square_function_estimates}, so that for all $f \in L_2(\gamma)$, we obtain
    \begin{align}
    \sum_{j=1}^\infty |b_j^\mp F^{(k)}(q^\pm_j)|^2 \simeq \int_{\Omega^\pm}|F^{(k)}(z)|^2\,d(z)^{2k-1}dA(z),\label{eq:discretization_estimate}
    \end{align}
    where $F$ is the Cauchy integral of $f$, and $b_j^\mp$ are the normalizing constants for the (unnormalized) Lusin wavelets $(w-q^\pm_j)^{-(k+1)}$. Using the fact that
    \[b_j^\mp F^{(k)}(q^\pm_j) = b_j^\mp\frac{k!}{2\pi i}\int_\gamma f(w)(w-q_j^\pm)^{-(k+1)}\,dw = \frac{k!}{2\pi i}(f, \overline{i \nu \psi_j^\mp}),\]
    where $\nu$ is the unit normal vector of $\gamma$ pointing into $\Omega^-$, we then combine Theorem~\ref{thm:square_function_estimates} and~\eqref{eq:discretization_estimate} to conclude that for every $f \in L_2(\gamma)$,
    \begin{align*}
        \|f\|^2 &\simeq \sum_{j=1}^\infty |(f, \overline{i \nu \psi_j^+})|^2 + \sum_{j=1}^\infty |(f, \overline{i \nu \psi_j^-})|^2 + \sum_{j=1}^{k}|(f, \overline{i \nu w^{-j}})|^2\\
        &= \sum_{j=1}^\infty |(\overline{i \nu f},  \psi_j^+)|^2 + \sum_{j=1}^\infty |(\overline{i \nu f},  \psi_j^+)|^2 + \sum_{j=1}^{k}|(\overline{i \nu f},  w^{-j})|^2.
    \end{align*}
    Replacing $f$ by $\overline{i \nu f}$ then implies that
    \[\|f\|^2 \simeq \sum_{j=1}^\infty |(f, \psi_j^+)|^2 + \sum_{j=1}^\infty |(f, \psi_j^-)|^2 + \sum_{j = 1}^{k}|(f, w^{-j})|^2, \quad \quad f \in L_2(\gamma),\]
    since $\|\overline{i \nu f}\| = \|f\|$. The proof thus reduces to showing~\eqref{eq:discretization_estimate}. We prove this estimate for the case of $\sum_{j=1}^\infty |b_j^- F^{(k)}(q^+_j)|^2$; the other case, where $+$ is replaced by $-$, and vice versa, is analogous.
    
    We start by showing the inequality $\lesssim$ of~\eqref{eq:discretization_estimate}, which is easier. Let $d(z)= \dist(z,\gamma)$ and set $B_j  = B(q^+_j, \varepsilon d(q^+_j))$. 
    Note that
    \begin{align*}\label{eq:comparable_coefficients}
        |b_j^-|^2= \bigg(\int_\gamma |w - q_j^+|^{-(2k+2)}\,|dw|\bigg)^{-1} \simeq d(q_j^+)^{2k+1} \simeq \frac{1}{\varepsilon^2}d(q_j^+)^{2k-1}m(B_j),
    \end{align*}
    where $m$ denotes Lebesgue measure. Using this observation, $d(z) \simeq d(q^+_j)$ for $z \in B_j$, the mean value properties for holomorphic functions, and Condition~\ref{item:whitney_set_cond_2} of Definition~\ref{def:whitney_set}, we calculate
    \begin{equation}\label{eq:calculation_for_upper_frame_bound}
    \begin{aligned}
        \sum_{j = 1}^\infty |b_j^-F^{(k)}(q_j^+)|^2 & \simeq \frac{1}{\varepsilon^2} \sum_{j = 1}^\infty d(q_j^+)^{2k-1}m(B_j)|F^{(k)}(q_j^+)|^2 \\
        & = \frac{1}{\varepsilon^2} \sum_{j = 1}^\infty d(q_j^+)^{2k - 1}m(B_j)^{-1}\bigg|\int_{B_j}F^{(k)}(z)\,dA(z)\bigg|^2 \\
        & \leq \frac{1}{\varepsilon^2} \sum_{j = 1}^\infty d(q_j^+)^{2k - 1}\int_{B_j}|F^{(k)}(z)|^2\,dA(z) \\
        & \simeq \frac{1}{\varepsilon^2} \int_{\Omega^+}\sum_{j = 1}^\infty\chi_{B_j}(z)|F^{(k)}(z)|^2d(z)^{2k - 1}\,dA(z) \\
        & \lesssim \frac{1}{\varepsilon^2} \int_{\Omega^+}|F^{(k)}(z)|^2d(z)^{2k - 1}\,dA(z).
    \end{aligned}
    \end{equation}
    This proves the inequality $\lesssim$ of~\eqref{eq:discretization_estimate}. Note that this estimate holds for \textit{any} fixed $\varepsilon$.

    We now consider the other direction $\gtrsim$ of~\eqref{eq:discretization_estimate}. 
    Observe that it is enough to show that this estimate holds with $\sum_{j=1}^\infty |b_j^-F^{(k)}(q^+_j)|^2$ replaced by $\sum_{j \in \mathcal{I}} |b_{j}^-F^{(k)}(q^+_{j})|^2$, for any $\mathcal{I} \subset \N$. We now assume that $\varepsilon_0$, which is to be chosen sufficiently small, is less than $1/20$. If $\varepsilon \leq \varepsilon_0$, then according to Lemma~\ref{lem:extracting_sub_whitney_sets}, there exists $\mathcal{I} \subset \N$ such that $\{q_j^+\}_{j \in \mathcal{I}}$ is a $5\varepsilon$-fine Whitney set for $\Omega^+$ with respect to $\gamma$, with covering constant $N \leq 1200$.  
    The point of considering $\{q_j^+\}_{j \in \mathcal{I}}$ instead of $\{q_j^+\}_{j = 1}^\infty$, is that we have the absolute bound 1200 on the covering constant $N$; this ensures that we can choose $\varepsilon_0$ independently of the covering constant for $\{q_j^+\}_{j =1}^\infty$.
    
    Define the norms $\| \cdot \|_\Omega^2 = \int_\Omega |\cdot|^2\,dA(z)$ and $\| \cdot\|_{\Omega,k}^2 = \int_\Omega |\cdot|^2 \,d(z)^{2k-1}dA(z)$.   
    Let $B_j' = B(q_j^+,5\varepsilon)$ and let $a_j = \|1\|_{B'_j,k}$. By the conjugate rule and Cauchy-Schwarz, 
    \begin{equation}\label{eq:Lusin_frame_3.5}
    \begin{aligned}
        & \bigg|\sum_{j \in \mathcal{I}}\|F^{(k)}\|^2_{B_j',k} - \sum_{j \in \mathcal{I}}|a_j F^{(k)}(q^+_j)|^2 \bigg| \\
        & \leq \bigg(\sum_{j \in \mathcal{I}}\big|\|F^{(k)}\|_{B_j',k} - |a_j F^{(k)}(q^+_j)|\big|^2 \bigg)^{1/2}\bigg(\sum_{j \in \mathcal{I}}\big|\|F^{(k)}\|_{B_j',k} + |a_j F^{(k)}(q^+_j)|\big|^2 \bigg)^{1/2} \\
        & = \text{I} \cdot \text{II}.
    \end{aligned}
    \end{equation}
    The triangle inequality together with a similar calculation as in~\eqref{eq:calculation_for_upper_frame_bound}, with $b_j^-$ replaced by $a_j$, shows that
    \begin{align}\label{eq:estimate_of_II}
        \text{II} \lesssim \bigg(\sum_{j \in \mathcal{I}}\|F^{(k)}\|^2_{B'_j,k} \bigg)^{1/2},
    \end{align}
    where the constant implicit in the estimate is universal. To estimate I, we use the mean value property of holomorphic functions in conjunction with the Poincaré-Wirtinger inequality for balls to get
    \begin{equation}\label{eq:poincare_wirtinger}
    \begin{aligned}
        \big|\|F^{(k)}\|_{B'_j,k} - |a_j F^{(k)}(q^+_j)|\big| & = \big|\|F^{(k)}\|_{B_j',k} - \|F^{(k)}(q^+_j)\|_{B_j',k} \big| \\
        & \leq \|F^{(k)} - F^{(k)}(q^+_j)\|_{B_j',k} \\
        & \simeq d(q_j^+)^{k-1/2}\|F^{(k)} - F^{(k)}(q^+_j)\|_{B_j'} \\
        & \lesssim d(q_j^+)^{k+1/2} \varepsilon \|F^{(k+1)}\|_{B_j'} \\
        & \simeq \varepsilon \|F^{(k+1)}\|_{B_j',k+1}.
    \end{aligned}    
    \end{equation}
    By Lemma~\ref{lem:k+1_norm_less_than_k_norm}, and Conditions~\ref{def:whitney_set_cond_1} and~\ref{item:whitney_set_cond_2} of Definition~\ref{def:whitney_set}, we have
    \begin{align}\label{eq:Lusin_frame_3}
        \sum_{j \in \mathcal{I}}\|F^{(k+1)}\|^2_{B'_j,k+1} \leq 400\|F^{(k+1)}\|^2_{\Omega^+,k+1} \lesssim \|F^{(k)}\|^2_{\Omega^+,k} \leq \sum_{j \in \mathcal{I}}\|F^{(k)}\|^2_{B'_j,k},
    \end{align}
    where the constants implicit in the estimates are universal. Combining~\eqref{eq:poincare_wirtinger} and~\eqref{eq:Lusin_frame_3} then shows that
    \begin{align}\label{eq:estimate_of_I}
        \text{I} \lesssim \varepsilon\bigg(\sum_{j \in \mathcal{I}}\|F^{(k+1)}\|^2_{B'_j,k+1}\bigg)^{1/2} \lesssim \varepsilon \bigg(\sum_{j \in \mathcal{I}}\|F^{(k)}\|^2_{B'_j,k}\bigg)^{1/2}.
    \end{align}
    Therefore, \eqref{eq:Lusin_frame_3.5} together with \eqref{eq:estimate_of_II} and \eqref{eq:estimate_of_I} gives
    \begin{align*}
        \bigg|\sum_{j \in \mathcal{I}}\|F^{(k)}\|^2_{B_j',k} - \sum_{j \in \mathcal{I}}|a_j F^{(k)}(q^+_j)|^2 \bigg| \leq C\varepsilon \sum_{j \in \mathcal{I}}\|F^{(k)}\|^2_{B_j',k}
    \end{align*}
    for some universal constant $C$. By choosing $\varepsilon_0 = \frac{1}{20}\min(1/C, 1)$, and using $|a_j| \simeq \varepsilon|b_j^-|$, it follows that whenever $\varepsilon \leq \varepsilon_0$,
    \begin{multline*}
         (1-C\varepsilon)\|F^{(k)}\|^2_{\Omega^+,k} \leq (1-C\varepsilon)\sum_{j \in \mathcal{I}}\|F^{(k)}\|^2_{B_j',k} \\ 
         \leq \sum_{j \in \mathcal{I}}|a_j F^{(k)}(q^+_j)|^2 \lesssim \varepsilon\sum_{j = 1}^\infty|b_j^- F^{(k)}(q^+_j)|^2,
    \end{multline*}
    which is precisely the inequality $\gtrsim$ of~\eqref{eq:discretization_estimate}. We conclude that $\{\psi_j^+\}_{j = 1}^\infty \cup \{\psi_j^-\}_{j = 1}^\infty \cup \{w^{-j}\}_{j = 0}^{k-1}$ is a frame for $L_2(\gamma)$.
\end{proof}
We now give a few auxiliary results in preparation for the case $\delta < \infty$, the first of which is interesting in its own right. 
\begin{proposition}
    \label{prop:no_lusin_basis}
    Let $\{q_j^\pm\}_{j = 0}^\infty \subset \Omega^\pm$ and write $\psi_j^\pm(w) = b_j^\pm(w-q_j^\mp)^{-(k+1)}$ for the corresponding Lusin wavelets, where $k \ge 0$ is fixed. 
    Then 
    \begin{enumerate}[font = \rmfamily, label = \textup{(\roman*)}]
        \item      $\Span\{\psi_j^+\}_{j = 0}^\infty$ is dense in $L_2^+(\gamma)$ if and only if $\Span\{\psi_j^+\}_{j = 1}^\infty$ is dense in $L_2^+(\gamma).$ \label{item:no_lusin_basis_1}
        \item     $\Span(\{\psi_j^-\}_{j = 0}^\infty \cup \{w^{-j}\}_{j=1}^{k})$ is dense in $L_2^-(\gamma)$ if and only if $\Span(\{\psi_j^-\}_{j = 1}^\infty \cup \{w^{-j}\}_{j=1}^{k})$ is dense in $L_2^-(\gamma).$ \label{item:no_lusin_basis_2}
    \end{enumerate}
\end{proposition}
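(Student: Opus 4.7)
The direction $(\Leftarrow)$ of each equivalence is immediate since the larger span contains the smaller, so I focus on $(\Rightarrow)$ for part (i); part (ii) proceeds analogously, with the extra vectors $\{w^{-j}\}_{j=1}^k$ imposing constraints on the Laurent tail at $\infty$ of the relevant analytic extensions. My strategy is by contrapositive: assuming $\Span\{\psi_j^+\}_{j\ge 1}$ fails to be dense in $L_2^+(\gamma)$, I produce a non-zero element of the joint orthogonal complement $\{\psi_j^+\}_{j\ge 0}^\perp$, contradicting the density of $\Span\{\psi_j^+\}_{j\ge 0}$.

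The main bridge is the linear map sending $g \in L_2^+(\gamma)$ to the antiholomorphic function
\[h_g(q) = \int_\gamma g(w)\,\overline{(w-q)^{-(k+1)}}\,|dw|, \qquad q \in \Omega^-.\]
Being holomorphic in $\overline{q}$, this is antiholomorphic in $q$. The map $g \mapsto h_g$ is injective because $h_g \equiv 0$ on $\Omega^-$ forces $g$ to be orthogonal to the family $\{(w-q)^{-(k+1)} : q \in \Omega^-\}$, which spans a dense subset of $L_2^+(\gamma)$ by a Runge-type density argument. Under this correspondence, the orthogonality $g \perp \psi_j^+$ becomes the pointwise vanishing $h_g(q_j^-) = 0$.

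Let $V = \overline{\Span\{\psi_j^+\}_{j \ge 1}}$. The contrapositive hypothesis gives a non-zero $g_0 \in V^\perp$, and $h_{g_0}$ is then a non-trivial antiholomorphic function on $\Omega^-$ vanishing on $\{q_j^-\}_{j\ge 1}$. The heart of the proof is the \emph{key step}: if $V^\perp$ is non-trivial, then $\dim V^\perp \ge 2$. Granting this, pick linearly independent $g_1, g_2 \in V^\perp$ and form
\[g := (g_2, \psi_0^+)\, g_1 - (g_1, \psi_0^+)\, g_2.\]
Then $g \in V^\perp$ and $(g, \psi_0^+) = 0$, and $g \neq 0$ unless $(g_1, \psi_0^+) = (g_2, \psi_0^+) = 0$, in which case $g_1$ itself is already in $\{\psi_j^+\}_{j\ge 0}^\perp \setminus\{0\}$. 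Either way, we obtain a non-zero element of $\{\psi_j^+\}_{j\ge 0}^\perp$, contradicting density.

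The key step is the anticipated main obstacle. My approach is to perturb $g_0$ to produce a second element of $V^\perp$ via multiplicative modification of $h_{g_0}$: multiplying by an antiholomorphic factor such as $(\overline{q} - \overline{q^\ast})$ for some $q^\ast \in \Omega^-$ yields a new antiholomorphic function that still vanishes on $\{q_j^-\}_{j\ge 1}$ and additionally at $q^\ast$. The delicate issue is that this product must lie back in the image of $g \mapsto h_g$. Handling this requires characterising the image as a concrete weighted Hardy-type space of antiholomorphic functions on $\Omega^-$ and verifying its closure under the relevant multiplicative operations. In the model case when $\Omega^+$ is the unit disk the image can be described explicitly and the closure property checked by hand, while the general Lipschitz case should reduce to this model via the $(t,\theta)$-transform and functional calculus for $BD$ used in Section~\ref{sec:sfe}, up to bounded multipliers.
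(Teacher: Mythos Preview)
Your contrapositive strategy and the core idea---encode orthogonality as vanishing of an analytic transform at the source points, then multiply by a linear factor to acquire the extra zero at $q_0^\mp$---is exactly what the paper does. However, your execution leaves the decisive step unproved. You explicitly flag that multiplying $h_{g_0}$ by $(\overline q - \overline{q^\ast})$ must land back in the image of $g\mapsto h_g$, and you propose to handle this by ``characterising the image as a concrete weighted Hardy-type space'' and reducing to the disk via the $BD$ functional calculus. That program is vague and, as stated, is not a proof; it is precisely the content of the proposition.

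The paper closes this gap directly and more cleanly. Instead of the sesquilinear transform $h_g$, it uses the Hardy duality of Lemma~\ref{lem:hardy_duality}: the annihilator of $\{\psi_j^-\}_{j\ge 1}\cup\{w^{-j}\}_{j=1}^k$ in $L_2^-(\gamma)$ corresponds, via $\langle\cdot,\cdot\rangle$, to functions $f\in L_2^+(\gamma)$ whose Cauchy integral satisfies $F^{(k)}(q_j)=0$ for $j\ge 1$ and $F^{(j)}(0)=0$ for $j\le k-1$. The closure-under-multiplication question then becomes concrete: given such an $f$, find $g\in L_2^+(\gamma)$ with $G^{(k)}(z)=(z-q_0)F^{(k)}(z)$. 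The paper writes down $g$ explicitly,
\[
g(w)=(w-q_0)f(w)-k\int_0^w F(z)\,dz-\sum_{j=0}^{k-1}c_jw^j,
\]
checks $g\in L_2^+(\gamma)$ because the antiderivative is a Hilbert--Schmidt operator, and verifies $G^{(k)}=(z-q_0)F^{(k)}$ by Leibniz. For part~(i) the construction is analogous but uses $(1-q_0z^{-1})$ and a Laurent-series argument in $\Omega^-$. This explicit antiderivative construction \emph{is} the missing ingredient in your sketch; once you have it, the detour through ``$\dim V^\perp\ge 2$ then take a linear combination'' is unnecessary---taking $q^\ast=q_0^-$ in your multiplication already yields the desired annihilator directly.
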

Note carefully that $j$ starts at 0 in the first parts of the statements and at 1 in the second parts of the statements.
It follows from the proposition that no set of Lusin wavelets (possibly together with a finite number of $w^{-j}$-vectors) is a Schauder basis for $L_2^+(\gamma)$ or $L_2^-(\gamma)$.
To see this, assume that $\{f_j\}_{j=0}^\infty$ is a Schauder basis for a Hilbert space $H$.
Then, by the closed graph theorem, the mapping $f = \sum_{j=0}^\infty \alpha_{j} f_j \mapsto \alpha_0 f_0$ is a bounded, complemented, and oblique projection.
Hence,
\[H = \Span\{f_0\} \oplus \overline{\Span\{f_j\}_{j=1}^\infty}\]
is a topological splitting, which in particular implies that $\Span\{f_j\}_{j=1}^\infty$ is not dense in $H$. To prove Proposition~\ref{prop:no_lusin_basis}, we use a duality between $L_2^+(\gamma)$ and $L_2^-(\gamma)$, which follows abstractly from Theorem~\ref{thm:hardy_splitting}.
\begin{lemma}
\label{lem:hardy_duality}
    The Hardy subspaces $L_2^+(\gamma)$ and $L_2^-(\gamma)$ form a dual pair under the bilinear form
    \[\langle f, g\rangle  = \int_\gamma f(w)g(w)\,dw, \quad \quad f \in L_2^+(\gamma), g \in L_2^-(\gamma),\]
    in the sense that
    \begin{align}
        \|f\| &\simeq \sup_{0 \neq g \in L_2^-(\gamma)} \frac{|\langle f, g \rangle|}{\|g\|}, \quad \quad f \in L_2^+(\gamma), \label{eq:hardy_duality_plus}\\
                \|g\| &\simeq \sup_{0 \neq f \in L_2^+(\gamma)} \frac{|\langle f, g \rangle|}{\|f\|}, \quad \quad g \in L_2^-(\gamma). \label{eq:hardy_duality_minus}
    \end{align}
    In particular, a subset $U^\pm$ is dense in $L_2^\pm(\gamma)$ if and only if there is no non-zero $g^\mp \in L_2^\mp(\gamma)$ with $\langle u^\pm, g^\mp \rangle = 0$ for all $u^\pm \in U^\pm$. 
\end{lemma}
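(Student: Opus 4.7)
The plan is to reduce both duality estimates to a single key identity: the bilinear form $\langle\cdot,\cdot\rangle$ vanishes on $L_2^+(\gamma)\times L_2^+(\gamma)$ and on $L_2^-(\gamma)\times L_2^-(\gamma)$. First, using $dw = i\nu(w)\,|dw|$, I rewrite $\langle f,g\rangle = (f,\overline{i\nu g})$, so that $g\mapsto \overline{i\nu g}$ is an antilinear $L_2(\gamma)$-isometry and Cauchy--Schwarz gives $|\langle f,g\rangle|\le \|f\|\|g\|$. This handles the $\lesssim$ direction in both~\eqref{eq:hardy_duality_plus} and~\eqref{eq:hardy_duality_minus}.

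For the diagonal vanishing on $L_2^+(\gamma)$, take $f_1,f_2 \in L_2^+(\gamma)$ with Cauchy integrals $F_1,F_2$, and consider $F_{j,t}(w) = F_j(e^{-t}w)$ from Theorem~\ref{thm:hardy_splitting}. Since $\gamma$ is a polar graph, $e^{-t}\overline{\Omega^+} \subset \Omega^+$ for $t>0$, so each $z\mapsto F_j(e^{-t}z)$ is holomorphic on a neighborhood of $\overline{\Omega^+}$, and Cauchy's theorem gives $\int_\gamma F_{1,t}F_{2,t}\,dw = 0$. Passing to the limit $t\to 0^+$ via Cauchy--Schwarz and the $L_2$-convergence $F_{j,t}\to f_j$ yields $\langle f_1,f_2\rangle = 0$. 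The $L_2^-$ case is analogous for $t<0$, but requires deforming the contour outward to a large circle $C_R$: since each Cauchy integral satisfies $F_j(z) = O(1/|z|)$ at infinity, the product is $O(1/|z|^2)$ and the integral over $C_R$ vanishes as $R\to\infty$. Thus $\int_\gamma F_{1,t}F_{2,t}\,dw = 0$, and the limit $t\to 0^-$ (combined with $-F_{j,t}\to g_j$) gives $\langle g_1,g_2\rangle = 0$.

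Given the vanishing, the duality is bookkeeping. For $f\in L_2^+(\gamma)$, the functional $\Lambda_f(h) = \langle f,h\rangle = (h,\overline{i\nu f})$ on $L_2(\gamma)$ has norm $\|\Lambda_f\|_{L_2(\gamma)^*} = \|f\|$ by Riesz, and annihilates $L_2^+(\gamma)$, so $\Lambda_f(h) = \Lambda_f(P^- h)$ for every $h\in L_2(\gamma)$ by Theorem~\ref{thm:hardy_splitting}. Estimating $\|h\|\ge \|P^- h\|/\|P^-\|$ gives
\[\|f\| = \|\Lambda_f\|_{L_2(\gamma)^*} \le \|P^-\|\sup_{0\ne g\in L_2^-(\gamma)}\frac{|\langle f,g\rangle|}{\|g\|},\]
proving~\eqref{eq:hardy_duality_plus}; \eqref{eq:hardy_duality_minus} follows symmetrically with $P^+$ in place of $P^-$. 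The density statement is then a standard Hahn--Banach consequence: if $U^+$ is not dense in $L_2^+(\gamma)$, a nonzero functional vanishing on $U^+$ is represented via~\eqref{eq:hardy_duality_plus} by some nonzero $g\in L_2^-(\gamma)$; conversely any such $g$ yields, via~\eqref{eq:hardy_duality_minus}, a nontrivial continuous functional on $L_2^+(\gamma)$ that annihilates $U^+$.

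The only real work is the diagonal vanishing on $L_2^-(\gamma)$, which is the one place that genuine complex analysis rather than Hilbert-space formalism enters: one must simultaneously justify the contour deformation to infinity (via the $O(1/|z|)$ decay of Cauchy integrals) and the exchange of limit and integration as $t\to 0^-$.
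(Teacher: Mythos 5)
Your proposal is correct and takes essentially the same route as the paper: both arguments rest on the identity $\langle f,\overline{i\nu f}\rangle=\|f\|^2$, the vanishing of the pairing on $L_2^+(\gamma)\times L_2^+(\gamma)$ and $L_2^-(\gamma)\times L_2^-(\gamma)$ via Cauchy's theorem, and the boundedness of the projections $P^\pm$ from Theorem~\ref{thm:hardy_splitting}. The paper merely instantiates your functional-norm argument with the explicit extremal test function $(\overline{i\nu f})^-$ instead of factoring $\Lambda_f$ through $P^-$ (and treats the density claim by the same annihilator reasoning), while you spell out the limiting/contour-deformation details behind the Cauchy-theorem step that the paper leaves implicit.
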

\begin{proof}
    Recall that $(\cdot, \cdot)$ denotes the canonical sequilinear inner-product on $L_2(\gamma)$.
    Let $f \in L_2^+(\gamma)$ be non-zero, and write $\overline{i \nu f} = (\overline{i \nu f})^+ + (\overline{i \nu f})^-$ according to~\eqref{eq:hardy_splitting}, so that $(\overline{i \nu f})^\pm \in L_2^\pm(\gamma)$.
    It then follows from Cauchy's integral theorem and Theorem~\ref{thm:hardy_splitting} that
    \begin{multline*}
        \|f\| 
        = \frac{(f, f)}{\|f\|} 
        = \frac{\langle f, \overline{i \nu f}\rangle}{\|f\|} 
        = \frac{\langle f, (\overline{i \nu f})^+ + (\overline{i \nu f})^- \rangle}{\|f\|} 
        =  \frac{\langle f, (\overline{i \nu f})^- \rangle}{\|f\|} 
        \\
        =  \frac{\langle f, (\overline{i \nu f})^- \rangle}{\|(\overline{i \nu f})^-\|} \frac{\|(\overline{i \nu f})^-\|}{\|f\|} 
        \lesssim    \frac{\langle f, (\overline{i \nu f})^- \rangle}{\|(\overline{i \nu f})^-\|} 
        \le \sup_{0 \neq g \in L_2^-(\gamma)} \frac{|\langle f, g \rangle|}{\|g\|}.
    \end{multline*}
    Applying the Cauchy-Schwarz inequality, we deduce that
    \[\|f\| \simeq \sup_{0 \neq g \in L_2^-(\gamma)} \frac{|\langle f, g \rangle|}{\|g\|}.\]
    The estimate for $L_2^-(\gamma)$ is proved analogously.
    To see that the density claim holds, assume that $U^+$ is a dense subset of $L_2^+(\gamma)$.
    Then, if $g^- \in L_2^-(\gamma)$ is such that $\langle u^+, g^- \rangle = 0$ for all $u^+ \in U^+$, it follows that
    \[\|g^-\| \simeq \sup_{0 \neq f \in L_2^+(\gamma) }\frac{|\langle f, g^-\rangle|}{\|f\|} = \sup_{0 \neq u^+ \in U^+ }\frac{|\langle u^+, g^-\rangle|}{\|u^+\|} = 0.\]
    For the other direction, assume that $U^+ \subset L_2^+(\gamma)$ is not dense.
    Then there is some non-zero $f^+ \in L_2^+(\gamma)$ so that $0 = (u^+, f^+) = \langle u^+, (\overline{i \nu f^+})^- \rangle$ for all $u^+ \in U^+$.
    From~\eqref{eq:hardy_duality_minus} we deduce that
    \[\|(\overline{i \nu f^+})^-\| \gtrsim \frac{|\langle f^+, (\overline{i \nu f^+})^- \rangle |}{\|f^+\|} = \frac{|\langle f^+, \overline{i \nu f^+} \rangle |}{\|f^+\|} = \|f^+\| > 0.\]
    So $(\overline{i \nu f^+})^-$ is a non-zero function in $L_2^-(\gamma)$ with $\langle u^+, (\overline{i \nu f^+})^- \rangle = 0$ for all $u^+ \in U^+$.
    The characterization of dense subsets of $L_2^-(\gamma)$ is proved similarly.
\end{proof}

\begin{proof}[Proof of Proposition~\ref{prop:no_lusin_basis}]
    We begin by proving~\ref{item:no_lusin_basis_2}  since this argument is slightly simpler.
    By contraposition, we assume that $\Span(\{\psi_j^-\}_{j = 1}^\infty \cup \{w^{-j}\}_{j=1}^{k})$ is not dense in $L_2^-(\gamma)$, and aim to prove that $\Span(\{\psi_j^-\}_{j = 0}^\infty \cup \{w^{-j}\}_{j=1}^{k})$ is not dense in $L_2^-(\gamma)$.
    By Lemma~\ref{lem:hardy_duality} and Cauchy's integral formula, there is some non-zero $f \in L_2^+(\gamma)$ such that $F^{(k)}(q_j) = 0$ for all $j \ge 1$ and $F^{(j)}(0) = 0$ for $j = 0, \dots, k-1$.
    It then suffices to show that there is some non-zero $g \in L_2^+(\gamma)$ with $G^{(k)}(q_j) = 0$ for all $j \ge 0$ and $G^{(j)}(0) = 0$ for $j = 0, \dots, k-1$.
    The main goal of the proof is to show that there is some $g \in L_2^+(\gamma)$ so that
    \[G^{(k)}(z) = (z-q_0)F^{(k)}(z), \quad \quad z \in \Omega^+.\]
    Fix $a \in \Omega^-$ and define
    \[g(w) = (w-q_0)f(w) - k \int_{0}^w F(z)\,dz - \sum_{j=0}^{k-1}c_j w^j, \quad \quad w \in \gamma,\]
     where $(c_j)_{j=0}^{k-1}$ are constants chosen so that $G^{(j)}(0) = 0$ for $j = 0, \dots, k-1$. 
    Then $g$ is a well-defined function in $L_2^+(\gamma)$ since $f \mapsto \int_0^w F(z)\,dz$ is a $L_2^+(\gamma) \to L_2^+(\gamma)$ Hilbert-Schmidt (hence bounded) operator.
    Indeed, it is an integral operator with kernel $K(z,w) =  \frac{1}{2\pi i}(\log(-z)-\log(w-z))$, which is a function in $L_2(\gamma \times \gamma)$.
    Further, by Leibniz' rule, we have that $G^{(k)}(z) = (z-q_0)F^{(k)}(z)$, so $G^{(k)}(q_j) = 0$ for all $j \ge 0$.
    The function $g$ is non-zero, as otherwise we would have $G^{(k)} = 0$, implying that $F^{(k)} = 0$ and hence $f=0$ by Theorem~\ref{thm:square_function_estimates}, a contradiction.

    For the statement~\ref{item:no_lusin_basis_1}, assume that there is some non-zero $f \in L_2^-(\gamma)$ so that $F^{(k)}(q_j) = 0$ for all $j \ge 1$.
    We want to show that there exists $g \in L_2^-(\gamma)$ such that
    \[G^{(k)}(z) = (1-q_0z^{-1})F^{(k)}(z), \quad \quad z \in \Omega^-.\]
    Define 
    \[g(w)=(1-q_0w^{-1})f(w) -kq_0w^{k-1} \int_{a}^w z^{-k-1}F(z)\,dz, \quad \quad w \in \gamma.\]
    The integral $\int_a^w z^{-k-1}F(z)\,dz$ is well-defined (does not depend on the path connecting $a$ to $w$), since for a contour $\gamma' \subset \Omega^-$ with $\dist(\gamma, \gamma')$ sufficiently large, we have that 
    \[\int_{\gamma'} z^{-k-1}F(z)\,dz = \sum_{n=1}^\infty \alpha_n \int_{\gamma'} z^{-n-k-1}\,dz=0,\] 
    where $\sum_{n=1}^\infty \alpha_n z^{-n}$ is the Laurent series of $F(z)$.
    It follows that $g \in L_2(\gamma)$ since $f \mapsto \int_a^w z^{-k-1}F(z)\,dz$ is Hilbert-Schmidt as an operator $L_2^-(\gamma) \to L_2(\gamma)$.
    If $g$ is not in $L_2^-(\gamma)$, we may replace $g$ by $g^-$, the part of $g$ in $L_2^-(\gamma)$, as the Cauchy integral of $g$ and $g^{-}$ are the same for $z \in \Omega^-$. 
    To see that $G^{(k)}(z) = (1-q_0z^{-1})F^{(k)}(z)$, we compute the Laurent series of $G$ for $|z|$ sufficiently large:
    \begin{multline*}
        G(z) = F(z) -q_0\sum_{n=1}^\infty \alpha_n z^{-n-1} + kq_0z^{k-1}\sum_{n=1}^\infty \frac{\alpha_n}{n+k} z^{-n-k} \\
        = F(z) - q_0 \sum_{n=1}^\infty\alpha_n \frac{n}{n+k}z^{-n-1},
    \end{multline*}
    where the term $kq_0z^{k-1}\sum_{n=1}^\infty \frac{\alpha_n}{-n-k} a^{-n-k}$ arising from the integral given by $kq_0z^{k-1} \int_{a}^w \zeta^{-k-1}F(\zeta)\,d\zeta$ disappears as it is in $L_2^+(\gamma)$, and hence is annihilated when computing the Cauchy integral of $g$.
    Then $G^{(k)}(z) = (1-q_0z^{-1})F^{(k)}(z)$, for $|z|$ sufficiently large, which by analytic continuation extends to all of $\Omega^-$.
    Consequently, $G^{(k)}(q_j) = 0$ for all $j \ge 0$.
    Finally, $g \neq 0$, as otherwise $G^{(k)}(z) = (1-q_0z^{-1})F^{(k)}(z) = 0$, which implies that $f = 0$ by Theorem~\ref{thm:square_function_estimates}.
\end{proof}
Next, we have another result related to the removal of Lusin wavelets from a collection of such functions. More precisely, given a frame of Lusin wavelets for $L_2(\gamma)$ with source points in $\Omega^\pm$ (together with the functions $\{w^{-j}\}_{j = 1}^k$), where the source points in $\Omega^-$ may form an unbounded set, we can remove all Lusin wavelets whose source points lie outside a bounded neighborhood of $\gamma$ without disturbing the frame property. This is possible because $\gamma$ is a compact curve.
\begin{lemma}\label{lem:removal_of_lusin_wavelets_at_a_positive_distance_from_gamma}
    Consider Whitney sets $\{q_j^\pm\}_{j = 1}^\infty \subset \Omega^\pm$ for $\Omega^\pm$ with respect to $\gamma$. Let $\psi_{j}^\pm$ be the normalized Lusin wavelets corresponding to $q_j^\mp$. Fix $\alpha > 0$ and let $\N = \I_1\sqcup \I_2$, where $d(q_j^-) \geq \alpha > 0$ for every $j \in \I_2$. Let $\J \subset \N$ be any finite set. If $\F := \{\psi^+_j\}_{j = 1}^\infty \cup \{\psi^-_j\}_{j = 1}^\infty \cup \{w^{-j}\}_{j = 1}^k$ is a frame for $L_2(\gamma)$, then so is $\{\psi^+_j\}_{j \in \I_1} \cup \{\psi^-_j\}_{\in \N \setminus \J} \cup \{w^{-j}\}_{j = 1}^k$.
\end{lemma}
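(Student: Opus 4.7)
The plan is to derive the lemma from two claims about the ``removed'' subset $\F_0 := \{\psi_j^+\}_{j \in \I_2} \cup \{\psi_j^-\}_{j \in \J}$: (A) the associated partial frame operator $S_0 := \sum_{j \in \I_2}|\psi_j^+\rangle\langle\psi_j^+| + \sum_{j \in \J}|\psi_j^-\rangle\langle\psi_j^-|$ is compact on $L_2(\gamma)$, and (B) the reduced system $\F \setminus \F_0$ is complete in $L_2(\gamma)$. Let $S$ denote the frame operator of $\F$, so $AI \le S \le BI$ by hypothesis and $S$ is invertible. The proposed reduced frame operator is $S' = S - S_0$, and its upper Bessel bound $S' \le S \le BI$ is automatic. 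For the lower bound, factor $S - S_0 = S^{1/2}(I - S^{-1/2} S_0 S^{-1/2}) S^{1/2}$. If (A) holds then $S^{-1/2} S_0 S^{-1/2}$ is a compact self-adjoint contraction, so by the spectral theorem it has $\|S^{-1/2}S_0 S^{-1/2}\| < 1$ unless $1$ is an eigenvalue, which by (B) would give a nontrivial kernel of $S'$, a contradiction. Hence $S' \ge \epsilon A I$ for some $\epsilon > 0$.

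For (B), apply Lemma~\ref{lem:hardy_duality}: any $f \in L_2(\gamma)$ annihilating the reduced system corresponds to $\tilde g \in L_2(\gamma)$ with $(h,f) = \langle h, \tilde g \rangle$. Cauchy's integral formula translates the orthogonality conditions to vanishing derivative conditions on the Cauchy integral $\tilde G$ of $\tilde g$: namely $\tilde G^{(k)}(q_j^+) = 0$ for all $j \notin \J$, $\tilde G^{(k)}(q_j^-) = 0$ for all $j \in \I_1$, and $(\tilde G^+)^{(l)}(0) = 0$ for $0 \le l \le k-1$, where $\tilde G^\pm = \tilde G|_{\Omega^\pm}$. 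The Whitney set $\{q_j^+\}_{j \notin \J}$ still has accumulation points inside the open set $\Omega^+$ after removing a finite subset, so the identity theorem applied to $(\tilde G^+)^{(k)}$ forces $\tilde G^+$ to be polynomial of degree at most $k-1$; the vanishing at the origin then gives $\tilde G^+ \equiv 0$, hence $\tilde g \in L_2^-(\gamma)$ by the jump formula. One can then iterate Proposition~\ref{prop:no_lusin_basis} a finite number of times to absorb the finite removal $\J$ into a density statement, and combine this with the vanishing of $\tilde G^{(k)}$ on $\{q_j^-\}_{j \in \I_1}$ plus completeness of the original frame $\F$ to conclude $\tilde g = 0$.

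The main obstacle is (A). The finite part $\sum_{j \in \J}|\psi_j^-\rangle\langle\psi_j^-|$ is of finite rank, hence compact. The work is in $\sum_{j \in \I_2}|\psi_j^+\rangle\langle\psi_j^+|$. Here we exploit the uniform bound $d(q_j^-) \ge \alpha$ for $j \in \I_2$: each $\psi_j^+$ extends analytically to the fixed neighborhood $\{z : d(z,\gamma) < \alpha/2\}$, with uniform derivative bounds by Cauchy's estimates. We partition $\I_2 = \I_2^{\mathrm{bd}} \sqcup \I_2^{\mathrm{far}}$ with $|q_j^-| \le R$ on $\I_2^{\mathrm{bd}}$ and $|q_j^-| > R$ on $\I_2^{\mathrm{far}}$, where $R$ is chosen large compared to $\sup_{w \in \gamma}|w|$. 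The set $\I_2^{\mathrm{bd}}$ is finite by the locally finite Whitney covering, since the Whitney balls associated to $q_j^-$ with $d(q_j^-) \ge \alpha$ have radii bounded below by $\varepsilon \alpha$. For $j \in \I_2^{\mathrm{far}}$, the Taylor expansion
\[
\psi_j^+(w) = b_j^+ \sum_{n \ge 0} \binom{n+k}{k}\frac{(-1)^{k+1}}{(q_j^-)^{n+k+1}} w^n
\]
converges uniformly on $\gamma$, and the coefficients decay geometrically in $n$ for each fixed $j$, uniformly in $j$. By truncating at degree $N$ one approximates the sum of rank-one operators by a finite-rank operator whose range lies in $\mathrm{Span}\{1, w, \ldots, w^N\}$, with error controllable in operator norm using the Bessel bound inherited from $\F$; letting $N \to \infty$ gives compactness of the full $S_0$, completing the proof.
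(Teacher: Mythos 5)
Your reduction to (A) compactness of the removed part $S_0$ of the frame operator and (B) completeness of the reduced system is structurally sound, and your proof of (A) can be made rigorous: the finite part $\{j\in\I_2 : |q_j^-|\le R\}$ is indeed finite (Whitney balls with $d(q_j^-)\ge\alpha$ have radii bounded below and bounded overlap), and the truncated Taylor approximation of the far wavelets converges in operator norm because only $O(1)$ Whitney points lie in each dyadic annulus $|q_j^-|\sim 2^mR$. This is a legitimate alternative to the paper's route, which instead proves compactness of the corresponding piece $U_2$ of the analysis operator by dominating it by a Hilbert--Schmidt operator. The genuine gap is in (B). A Whitney set $\{q_j^+\}_{j=1}^\infty$ for $\Omega^+$ has \emph{no} accumulation points inside $\Omega^+$: on any compact $K\subset\Omega^+$ the distances $d(q_j^+)$ are bounded below, so the bounded-overlap condition leaves only finitely many $q_j^+$ in $K$; all accumulation points lie on $\gamma$. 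Hence the identity theorem cannot be applied to $(\tilde G^+)^{(k)}$, and what it was supposed to deliver is precisely the delicate point: a nonzero function analytic in $\Omega^+$ can vanish on a set accumulating only at the boundary (Blaschke-type examples), so no soft uniqueness argument forces $\tilde G^+$ to be a polynomial. Excluding such zero sets for $k$-th derivatives of Cauchy integrals of $L_2$ functions requires quantitative input (the square function estimates / frame property), which is exactly what the paper's argument supplies rather than assumes.

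The final step of (B) has a second gap of the same nature: after reducing to $\tilde g\in L_2^-(\gamma)$, you propose to iterate Proposition~\ref{prop:no_lusin_basis} ``a finite number of times'' and invoke completeness of $\F$. But the conditions you retain are $\tilde G^{(k)}(q_j^-)=0$ only for $j\in\I_1$, whereas completeness of $\F$ would need them for all $j\in\N$; the missing set $\I_2$ is in general infinite (it is exactly the family being removed, and is infinite in the paper's application), so the finite-removal Proposition~\ref{prop:no_lusin_basis} cannot bridge it. In other words, (B) -- completeness after deleting the infinite family $\{\psi_j^+\}_{j\in\I_2}$ -- is essentially the content of the lemma, and your argument for it is circular once the invalid identity-theorem step is removed. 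The paper avoids this: compactness of $U_2$ plus Fredholm perturbation theory gives that $U_1$ has closed range with \emph{finite-dimensional} kernel; injectivity is then restored by adding back a suitable finite subset $\I_2'\subset\I_2$ (yielding a frame, hence completeness of that larger family), and only then are the finitely many vectors indexed by $\I_2'$ and $\J$ deleted using Proposition~\ref{prop:no_lusin_basis} together with the Heil--Walnut theorem that removing a vector from a frame leaves either a frame or an incomplete set. If you replace your step (B) by this finite-defect argument, your operator-theoretic formulation of the lower bound can be salvaged; as written, the proof does not go through.
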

\begin{proof}
    Define $U : L_2(\gamma) \to \ell^2(\Z)$ by 
    \begin{equation*}
         (Uf)_j =\begin{cases}
            (f,\psi^-_{-j-k+1}) \quad & \text{if } j \in (-\infty,-k], \\
            (f,w^{j-1}) & \text{if } j \in [-k+1,0], \\
            (f,\psi^+_j) \quad & \text{if } j \in [1,\infty).
        \end{cases}
    \end{equation*}  
    Note that since $\F$ is a frame, $U$ is bounded, injective, and has closed range (in fact, these properties of $U$ are equivalent to $\F$ being a frame; see Section~\ref{sec:bvp_theory} for more on this). Decompose $U = U_1 + U_2$, where 
    \begin{align*}
        (U_1f)_j = \chi_{\{j \leq 0\}\sqcup \I_1}(j)(Uf)_j \quad \text{and} \quad (U_2f)_j = \chi_{\I_2}(j)(Uf)_j.
    \end{align*}
    We claim that $U_1$ has closed range and that $\dim N(U_1) < \infty$. Let $b_j^+$ be the normalizing constant for $\psi^+_j$, so that $|b_j^+|^2 \simeq d(q_j^-)^{2k-1}m(B_j)$, where $B_j = B(q_j^-,\varepsilon d(q_j^-))$. Let $Nf = \overline{i\nu f}$ and note that $|(Nf, \psi_j^+)| \simeq |b_j^+F^{(k)}(q_j^-)|$, where $F$ is the Cauchy integral of $f$. Given $f \in L_2(\gamma)$, using the mean-value property for holomorphic functions, we estimate,
    \begin{equation*}
    \begin{aligned}
        \|U_2Nf\|^2 & \simeq \sum_{j \in \I_2} |b_j^+F^{(k)}(q_j^-)|^2 \\
        & \simeq \sum_{j \in \I_2} d(q_j^-)^{2k-1}m(B_j)|F^{(k)}(q_j^-)|^2 \\
        & = \sum_{j \in \I_2} d(q_j^-)^{2k - 1}m(B_j)^{-1}\bigg|\int_{B_j}F^{(k)}(z)\,dA(z)\bigg|^2 \\
        & \leq\sum_{j \in \I_2} d(q_j^-)^{2k - 1}\int_{B_j}|F^{(k)}(z)|^2\,dA(z) \\
        & \lesssim \int_{\Omega^- \setminus \Omega^-_{\alpha/2}}|F^{(k)}(z)|^2d(z)^{2k - 1}\,dA(z).
    \end{aligned}
    \end{equation*}
    The last step uses the finiteness of the covering constant for $\{q_j^-\}_{j = 1}^\infty$, and that $z \in B_j$ implies $d(z) \geq \frac{1}{2}d(q_j^-)$. Since $K : L_2(\gamma) \to L_2(\C\setminus \gamma, d(z)^{2k-1})$ defined by $Kf(z) = \chi_{\Omega^- \setminus \Omega^-_{\alpha/2}}(z)F^{(k)}(z)$ has the kernel estimate $\lesssim \chi_{\Omega^- \setminus \Omega^-_{\alpha/2}}(z)|z|^{-2k-2}$, it is a Hilbert-Schmidt operator and hence is compact. As $\|U_2Nf\| \lesssim \|Kf\|$, this implies that $U_2N$ is compact. But compactness of operators is stable under composition with bounded operators, hence $U_2 = U_2N N^{-1}$ is compact. Then, since $U$ is injective with closed range, $U_1 = U - U_2$ has closed range and $\textup{dim}\,N(U_1) <\infty$ by Fredholm perturbation theory. 

    We claim that there exists a finite index set $\I_2' \subset \I_2$ such that $U_1 + U_2'$ is injective, where $(U_2'f)_j := \chi_{\I_2'}(j)(f,\psi^+_j)$. The following technique is found in Christensen~\cite[Lem.~23.2.1]{Christensen_2016}. Let $A$ and $B$ denote the lower and upper frame bound, respectively, of the frame $\F$. Let $\beta = \frac{1}{2}(\frac{A}{2B})^{1/2}$. Since $N(U_1)$ is finite-dimensional, the set $\{f \in N(U_1) \:; \|f\| = 1\}$ is compact. Thus, we can cover it by finitely many balls $B(f_i, \beta)$, $i = 1,...,n$, where $\|f_i\| = 1$. Since 
    \begin{equation*}
        A\|f_i\|^2 \leq \sum_{j = 1}^\infty |(f_i,\psi^+_j)|^2 + \sum_{j = 1}^\infty |(f_i,\psi^-_j)|^2 + \sum_{j = 1}^k |(f_i,w^{-j})|^2
    \end{equation*}
     for each $i$, there exists a finite set $\I_2' \subset \I_2$ such that
    \begin{equation*}
        \frac{A}{2}\|f_i\|^2 \leq \sum_{j \in \I_1 \sqcup \I'_2} |(f_i,\psi^+_j)|^2 + \sum_{j = 1}^\infty |(f_i,\psi^-_j)|^2 + \sum_{j = 1}^k |(f_i,w^{-j})|^2
    \end{equation*}
    for each $i$. Given $f \in N(U_1)$ with $\|f\| = 1$, $\|f-f_i\| < \beta$ for some $i$. By the reverse triangle inequality, we have
    \begin{align*}
        \|(U_1+U_2')f\|  \geq \|(U_1+U_2')f_i\| - \|(U_1+U_2')(f-f_i)\| \\
        \geq \sqrt{\frac{A}{2}} - \sqrt{B}\beta = \frac{1}{2}\sqrt{\frac{A}{2}}.
    \end{align*}
    Thus, $\|f\| \lesssim \|(U_1 + U_2')f\|$ for every $f \in N(U_1)$. Since $N(U_1 + U_2') \subset N(U_1)$, this implies that $U_1 + U_2'$ is injective. 

    Now, $U_2'$ is a finite rank operator, hence $U_1 + U_2'$ has closed range. Thus, by the bounded inverse theorem, $\|f\| \simeq \|(U_1 + U_2')f\|$ for every $f \in L_2(\gamma)$, that is, $\{\psi^+_j\}_{j \in \I_1\cup\I_2'} \cup \{\psi^-_j\}_{j =1}^\infty \cup \{w^{-j}\}_{j = 1}^k$ is a frame for $L_2(\gamma)$. Since $\psi_j^+ \in L^+_2(\gamma)$ while $\psi_j^-, w^{-j} \in L^-_2(\gamma)$, this implies in particular that $\Span\{\psi^+_j\}_{j \in \I_1\cup\I_2'}$ is dense in $L^+_2(\gamma)$, and that $\Span(\{\psi^-_j\}_{j =1}^\infty \cup \{w^{-j}\}_{j = 1}^k)$ is dense in $L_2^-(\gamma)$. As $\I_2'$ and $\J$ are finite sets, Proposition~\ref{prop:no_lusin_basis} implies that $\Span \{\psi^+_j\}_{j \in \I_1}$ is dense in $L^+_2(\gamma)$, and that $\Span(\{\psi^-_j\}_{\N \setminus \J} \cup \{w^{-j}\}_{j = 1}^k)$ is dense in $L^-_2(\gamma)$; hence $\Span(\{\psi^+_j\}_{j \in \I_1} \cup \{\psi^-_j\}_{\N \setminus \J} \cup \{w^{-j}\}_{j = 1}^k)$ is dense in $L_2(\gamma)$. According to Heil and Walnut~\cite[Thm.~2.1.6]{HeilWalnut_1989}, removal of a vector from a frame either leaves a frame or an incomplete set. By applying this result $|\I_2'| + |\J|$ times, we conclude that $\{\psi^+_j\}_{j \in \I_1} \cup \{\psi^-_j\}_{\N \setminus \J} \cup \{w^{-j}\}_{j = 1}^k$ is a frame for $L_2(\gamma)$.
\end{proof}

By applying Lemma~\ref{lem:removal_of_lusin_wavelets_at_a_positive_distance_from_gamma}, we now show how the case $\delta < \infty$ follows immediately from the case $\delta = \infty$.
\begin{proof}[Proof of Theorem~\ref{thm:lusin_frame}, case $\delta < \infty$.] 
    Let $\varepsilon \leq \varepsilon_0$, where $\varepsilon_0$ was specified in the proof of Theorem~\ref{thm:lusin_frame}, case $\delta = \infty$.
    If $\{q_j^+\}_{j=1}^\infty$ is not a Whitney set for $\Omega^+$, let $\O^+ = \Omega^+ \setminus \Omega^+_\delta$. According to Lemma~\ref{lem:existence_of_whitney_sets}, there exists an $\varepsilon$-fine Whitney set $\{p^+_j\}_{j = 1}^{J} \subset \O^+$ for $\O^+$ with respect to $\gamma$. Then $\{q_j^+\}_{j = 1}^\infty \cup \{p^+_j\}_{j = 1}^{J} \subset \Omega^+$ is an $\varepsilon$-fine Whitney set for $\Omega^+$ with respect to $\gamma$. Note that since $\O^+$ is compactly contained in $\Omega^+$, Condition~\ref{item:whitney_set_cond_2} of Definition~\ref{def:whitney_set} implies that $J < \infty$. We apply the same procedure for $\{q_j^-\}_{j=1}^\infty$ to obtain an $\varepsilon$-fine Whitney set $\{q_j^-\}_{j=1}^\infty \cup \{p^-_j\}_{j = 1}^{\infty} \subset \Omega^-$ for $\Omega^-$ with respect to $\gamma$. In this case, the additional source points $p_j^-$ are no longer finite in number since $\Omega^-\setminus\Omega^-_\delta$ is unbounded. By Theorem~\ref{thm:lusin_frame}, case $\delta = \infty$, $\mathcal{F} \cup \{\phi_j^+\}_{j = 1}^{\infty} \cup \{\phi_j^-\}_{j = 1}^{J}$ is a frame for $L_2(\gamma)$, where $\mathcal{F} =\{\psi_j^+\}_{j = 1}^\infty\cup\{\psi_j^-\}_{j = 1}^\infty \cup \{w^{-j}\}_{j=1}^{k}$, and $\phi_j^\pm$ are the Lusin wavelets corresponding to the source points $p^\mp_j$. Since $d(p_j^-) \geq \delta$ for every $j$, and $J < \infty$, Lemma~\ref{lem:removal_of_lusin_wavelets_at_a_positive_distance_from_gamma} shows that $\{\psi_j^+\}_{j = 1}^\infty\cup\{\psi_j^-\}_{j = 1}^\infty \cup \{w^{-j}\}_{j=1}^{k}$ is a frame for $L_2(\gamma)$, which concludes the proof of Theorem~\ref{thm:lusin_frame}.
\end{proof}

We end this section with a remark related to Proposition~\ref{prop:no_lusin_basis}. According to this proposition, a Lusin frame for $L_2(\gamma)$ is never a Schauder basis for $L_2(\gamma)$. It is, however, true that any finite collection of Lusin wavelets is linearly independent. To see this, suppose $\{\psi_1, ... , \psi_J\}$ is a collection of Lusin wavelets with distinct source points $p_1,...,p_J \in \C \setminus \gamma$. Assume that we can write $\psi_i = \sum_{j \neq i} \alpha_j\psi_j$ for some $1 \leq i \leq J$, with equality holding on $\gamma$. By analytic continuation, the equality, in fact, holds on $\C \setminus\{p_1,...,p_J\}$. By letting $z \to p_i$, the left-hand side blows up, while the right-hand side stays bounded, leading to a contradiction.

\section{Solving boundary value problems with frames}
\label{sec:bvp_theory}
In this section, we describe the type of boundary value problems we aim to solve, and show how the Lusin frames of Theorem~\ref{thm:lusin_frame} can be used to solve these problems. We start by introducing standard operators related to frames; see Christensen~\cite[Eqs.~(5.3)-(5.5)]{Christensen_2016}. Although these operators are not used in our numerical algorithm in Section~\ref{sec:numerical_results}, they provide a convenient vocabulary for this section. Suppose 
$\{f_j\}_{j = 1}^\infty$ is a frame for a Hilbert space $H$ (see Definition~\ref{def:frame}). We define the \textit{analyzing operator} of the frame by
\begin{align*}
    U: H \to \ell^2, \quad Uf =  ((f,f_j)_{H})_{j = 1}^\infty.
\end{align*}
By Definition~\ref{def:frame}, $U$ is well-defined and bounded. Because $\{f_j\}_{j = 1}^\infty$ is a frame, $\sum_{j=1}^\infty c_j f_j$ is norm convergent for every $(c_j)_{j = 1}^\infty \in \ell^2$. The adjoint of $U$, called the \textit{synthesizing operator}, and the \textit{frame operator} $S$, are given by
\begin{align*}
    U^* &: \ell^2 \to H, \quad U^* (c_j)_{j = 1}^\infty = \sum_{j=1}^\infty c_j f_j,\\
    S&:H \to H, \quad Sf = U^*Uf = \sum_{j=1}^\infty (f, f_j)_H f_j.
\end{align*}
Note that for any bounded linear operator $T : H \to K$ between Hilbert spaces, in particular for $U : H \to \ell^2$, the following are equivalent:
\begin{enumerate}[font = \rmfamily, label = (\roman*)]
    \item $\|T f\|_K \gtrsim \|f\|_H$ for all $f \in H$,
    \item $T$ is injective with closed range,
    \item $T^*$ is surjective,
    \item $T^*T$ is invertible.
\end{enumerate}
Thus, by the lower frame bound in Definition~\ref{def:frame}, the corresponding properties hold for the operators $U$, $U^*$, and $S = U^*U$. In particular, by $\rm (iii)$, every $f \in H$ has a (possibly non-unique) frame expansion $f = \sum_{j=1}^\infty c_j f_j$. In fact, $\rm (iv)$ gives the following representation
\begin{align}\label{eq:canonical_frame_expansion}
    f = SS^{-1}f = \sum_{j=1}^\infty (S^{-1}f,f_j)_H f_j = \sum_{j=1}^\infty (f,S^{-1}f_j)_Hf_j.
\end{align}
This is the unique frame expansion that minimizes the norm of the coefficients, since $c_j =(f,S^{-1}f_j)_H = (S^{-1}f,f_j)_H$  gives $(c_j)_{j = 1}^\infty \in R(U) = N(U^*)^\perp$. The collection of vectors \\$\{S^{-1}f_j\}_{j = 1}^\infty$ is also a frame for $H$ (which follows from Lemma~\ref{lem:frames_under_closed_range_mappings} below), called the \textit{dual frame of $\{f_j\}_{j = 1}^\infty$}. In general, there is no easy way to compute this dual frame, and hence, the coefficients $(f,S^{-1}f_j)_H$ may be difficult to obtain. 

Unlike bases, frames are always preserved under bounded, closed-range linear mappings.
\begin{lemma}\label{lem:frames_under_closed_range_mappings}
Let $H$ and $K$ be Hilbert spaces over either $\R$ or $\C$, as indicated in \textup{(i)} and  \textup{(ii)}, respectively. Suppose that $\{f_j\}_{j = 1}^\infty$ is a frame for $H$.
    \begin{enumerate}[font=\upshape, label = (\roman*)]
        \item If $H$ and $K$ are both real or both complex, and if $T : H \to K$ is linear, bounded, and has closed range, then $\{Tf_j\}_{j = 1}^\infty$ is a frame for $R(T)$.
        \item If $H$ is complex, $K$ is real, and if $T: H \to K$ is $\R$-linear, bounded, and has closed range, then $\{Tf_j\}_{j = 1}^\infty \cup \{Tif_j\}_{j = 1}^\infty$ is a frame for $R(T)$.
    \end{enumerate} 
\end{lemma}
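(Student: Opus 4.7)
The plan is to invoke the characterization recalled just before this lemma: a family $\{f_j\}_{j=1}^\infty \subset H$ is a frame if and only if its analyzing operator $U : H \to \ell^2$, $Uf = ((f,f_j)_H)_{j=1}^\infty$, is bounded, injective, and has closed range. The two frame inequalities correspond, respectively, to $\|U\|<\infty$ and to $U$ being bounded below (equivalently, injective with closed range).

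For (i), I would introduce the analyzing operator $V : R(T) \to \ell^2$ of the candidate family $\{Tf_j\}$ and rewrite it as
\[ Vg \;=\; ((g, Tf_j)_K)_{j=1}^\infty \;=\; ((T^*g, f_j)_H)_{j=1}^\infty \;=\; U(T^*g), \]
where $U$ is the (bounded, bounded-below) analyzing operator of $\{f_j\}_{j=1}^\infty$. The upper frame bound for $\{Tf_j\}$ is then immediate from $\|V\|\le \|U\|\,\|T^*\|$. For the lower frame bound, it suffices to show that $T^*|_{R(T)}$ is bounded below. Since $T$ has closed range, the closed range theorem gives $R(T) = N(T^*)^\perp$ and $R(T^*) = N(T)^\perp$, both closed. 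Restricted to $R(T)$, $T^*$ is injective with range $N(T)^\perp$, so $T^*|_{R(T)} : R(T) \to N(T)^\perp$ is a continuous bijection between Hilbert spaces, hence a topological isomorphism by the open mapping theorem. This yields $\|T^*g\|_H \gtrsim \|g\|_K$ for $g \in R(T)$, and composing with the lower bound for $U$ gives the lower frame bound for $V$.

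For (ii), the idea is to reduce to (i) by viewing the complex Hilbert space $H$ as a real Hilbert space $H_\R$ with inner product $\langle f, g \rangle_\R := \Re(f,g)_H$. Decomposing the complex inner product into real and imaginary parts gives the identity
\[ |(f,g)_H|^2 \;=\; \langle f,g\rangle_\R^2 + \langle f, ig\rangle_\R^2, \qquad f,g \in H. \]
Summing over $j$ translates the complex frame inequalities for $\{f_j\}_{j=1}^\infty$ in $H$ into real frame inequalities for $\{f_j\}_{j=1}^\infty \cup \{if_j\}_{j=1}^\infty$ in $H_\R$, with the same constants $A, B$. Since $T$ is $\R$-linear, bounded, and has closed range from $H_\R$ to $K$, applying part (i) in this real-linear setting gives that $\{Tf_j\}_{j=1}^\infty \cup \{T(if_j)\}_{j=1}^\infty$ is a frame for $R(T)$.

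The main obstacle is the closed-range step in (i): verifying that $T^*|_{R(T)}$ is bounded below. This is standard but requires care in identifying the correct restricted domain and codomain before invoking the open mapping theorem. Everything else is essentially a bookkeeping exercise in translating the frame condition into properties of the analyzing operator.
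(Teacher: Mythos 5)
Your proposal is correct and takes essentially the same route as the paper: part (i) rests on the identity $(g,Tf_j)_K=(T^*g,f_j)_H$ together with the fact that $T^*$ is bounded below on $R(T)=\ker(T^*)^\perp$ because $T$ has closed range, and part (ii) is the same realification of $H$ with inner product $\Re(\cdot,\cdot)_H$ followed by an application of (i). Phrasing (i) via the analyzing operators $V=U\circ T^*|_{R(T)}$ rather than summing the frame inequalities directly is only a cosmetic difference.
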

\begin{proof}
    \begin{enumerate}[label = (\roman*)]
    \item Since $R(T) = N(T^*)^\perp$ and $T^*: N(T^*)^\perp \to H$ is injective with closed range, $\|T^* g \|_H \simeq \|g\|_K$ for all $g \in R(T)$. Therefore,
    \begin{align*}
        \sum_{j=1}^\infty |(g,Tf_j)_K|^2 = \sum_{j=1}^\infty |(T^*g,f_j)_H|^2 \simeq \|T^* g\|_H^2 \simeq \|g\|^2_K
    \end{align*}
    for every $g \in R(T)$.
    \item This follows from (i) after noting that $\{f_j\}_{j = 1}^\infty\cup\{if_j\}_{j = 1}^\infty$ is a frame for $H$, considered as a real Hilbert space equipped with the inner product $(\cdot,\cdot)_\R := \Re(\cdot,\cdot)_H$. \qedhere
    \end{enumerate}
\end{proof}
From Theorem~\ref{thm:lusin_frame}, we obtain Lusin frames for $L_2^\pm(\gamma)$.
\begin{proposition}\label{prop:frames_for_hardy_spaces}
    Let $\psi_j^\pm$ be defined as in Theorem~\ref{thm:lusin_frame}. Then $\{\psi_j^+\}_{j = 1}^\infty$ is a frame for $L_2^+(\gamma)$, and $\{\psi_j^-\}_{j = 1}^\infty\cup\{w^{-j}\}_{j = 1}^k$ is a frame for $L_2^-(\gamma)$. 
\end{proposition}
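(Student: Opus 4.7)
The plan is to derive Proposition~\ref{prop:frames_for_hardy_spaces} directly from Theorem~\ref{thm:lusin_frame} by projecting onto the Hardy subspaces via $P^\pm$ from Theorem~\ref{thm:hardy_splitting} and invoking Lemma~\ref{lem:frames_under_closed_range_mappings}(i). By Theorem~\ref{thm:lusin_frame}, the collection $\mathcal{F} := \{\psi_j^+\}_{j=1}^\infty \cup \{\psi_j^-\}_{j=1}^\infty \cup \{w^{-j}\}_{j=1}^k$ is a frame for $L_2(\gamma)$. Since $P^\pm : L_2(\gamma) \to L_2(\gamma)$ are bounded projections, they are in particular bounded operators with closed range $L_2^\pm(\gamma)$. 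Hence Lemma~\ref{lem:frames_under_closed_range_mappings}(i) yields that $\{P^+\phi : \phi \in \mathcal{F}\}$ is a frame for $L_2^+(\gamma)$ and $\{P^-\phi : \phi \in \mathcal{F}\}$ is a frame for $L_2^-(\gamma)$.

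The next step is to evaluate $P^\pm$ on each type of frame vector. Since $\psi_j^+(w) = b_j^+(w-q_j^-)^{-(k+1)}$ and $q_j^- \in \Omega^-$, the function $\psi_j^+$ extends holomorphically across $\gamma$ into $\Omega^+$, so its trace lies in $L_2^+(\gamma)$ and $P^+\psi_j^+ = \psi_j^+$, $P^-\psi_j^+ = 0$. Analogously $\psi_j^-(w) = b_j^-(w-q_j^+)^{-(k+1)}$ with $q_j^+ \in \Omega^+$ extends holomorphically to $\Omega^- \cup \{\infty\}$ (decaying at $\infty$ since $k \geq 1$), so $\psi_j^- \in L_2^-(\gamma)$ and $P^+\psi_j^- = 0$, $P^-\psi_j^- = \psi_j^-$. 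Finally, because the parametrization $r_\gamma > 0$ puts the origin in $\Omega^+$, each $w^{-j}$ with $1 \leq j \leq k$ is holomorphic in $\Omega^-$ and vanishes at infinity, so $w^{-j} \in L_2^-(\gamma)$ with $P^+ w^{-j} = 0$ and $P^- w^{-j} = w^{-j}$; if desired, this can be confirmed by a direct residue computation of the Cauchy integral $\frac{1}{2\pi i}\int_\gamma \frac{w^{-j}}{w-z}\,dw$, which evaluates to $0$ for $z \in \Omega^+$.

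Putting these identifications together, $\{P^+\phi\}_{\phi \in \mathcal{F}}$ equals $\{\psi_j^+\}_{j=1}^\infty$ together with countably many copies of the zero vector; zero vectors contribute nothing to the sums in Definition~\ref{def:frame}, so deleting them preserves both the upper and lower frame bound. This gives the first claim. Similarly $\{P^-\phi\}_{\phi \in \mathcal{F}}$ reduces to $\{\psi_j^-\}_{j=1}^\infty \cup \{w^{-j}\}_{j=1}^k$ after removing the zero vectors coming from the $\psi_j^+$, yielding the second claim.

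I expect no real obstacle here: the argument is essentially bookkeeping once Theorem~\ref{thm:lusin_frame} and Lemma~\ref{lem:frames_under_closed_range_mappings} are in hand. The only mildly delicate point is confirming that each basis vector sits in the correct Hardy subspace, which reduces to locating the poles relative to $\Omega^\pm$ and using $0 \in \Omega^+$ for the auxiliary vectors $w^{-j}$.
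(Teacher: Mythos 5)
Your proposal is correct and follows essentially the same route as the paper: apply Lemma~\ref{lem:frames_under_closed_range_mappings}(i) with $T = P^\pm$ to the full Lusin frame of Theorem~\ref{thm:lusin_frame}, identify that $\psi_j^+ \in L_2^+(\gamma)$ while $\psi_j^-, w^{-j} \in L_2^-(\gamma)$ so the projections fix the relevant vectors and annihilate the rest, and discard the resulting zero vectors. No substantive differences from the paper's argument.
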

\begin{proof}
    Let $P^\pm$ be the complementary projections from Theorem~\ref{thm:hardy_splitting}. Since each $\psi_j^+$ is analytic in $\Omega^+$, $\psi_j^+ \in L_2^+(\gamma)$ for every $j \ge 1$. Similarly, $\psi_j^-$ and $w^{-j}$ $(1 \leq j \leq k)$ are analytic in $\Omega^-$ with decay at infinity, hence $\psi_j^-, w^{-j} \in L_2^-(\gamma)$. Theorem~\ref{thm:lusin_frame} and Lemma~\ref{lem:frames_under_closed_range_mappings}(i) then shows that 
    \begin{equation*}
        \{\psi_j^+\}_{j = 1}^\infty \cup \{0\} = \{P^+ \psi_j^+\}_{j = 1}^\infty \cup \{P^+\psi_j^-\}_{j = 1}^\infty\cup \{P^+w^{-j}\}_{j = 1}^k,
    \end{equation*}
    and hence $\{\psi_j^+\}_{j = 1}^\infty$ is a frame for $R(P^+) = L_2^+(\gamma)$. By instead applying Lemma~\ref{lem:frames_under_closed_range_mappings}(i) with $T = P^-$, we see that $\{\psi_j^-\}_{j = 1}^\infty\cup\{w^{-j}\}_{j = 1}^k$ is a frame for $L_2^-(\gamma)$.
\end{proof}

We consider the following boundary value problems for harmonic and analytic functions. Let $\widetilde T : L_2(\gamma;\R^2) \to L_2(\gamma; \R)$ be an $\R$-linear bounded operator with closed range, and suppose that $g \in R\big(\widetilde T\big)$. We want to find $u : \Omega^+ \to \R$ such that
\begin{equation}\label{eq:real_BVP}
    \begin{aligned}
        \begin{cases}
            \Delta u = 0  \quad & \text{in } \Omega^+,\\
            \widetilde T(\nabla u|_\gamma) = g & \text{on } \gamma.
        \end{cases}
    \end{aligned}
\end{equation}
An equivalent complex, first-order formulation of this problem is as follows. Given a harmonic function $u$ in $\Omega^+$, we obtain an analytic function $f$ in $\Omega^+$ by considering $f = (\partial_x - i \partial_y)u$. Thus, if $u$ solves~\eqref{eq:real_BVP}, and $Tf = \widetilde T(\Re f,- \Im f)$, then $f$ solves
\begin{equation}\label{eq:complex_BVP}
    \begin{aligned}
        \begin{cases}
            (\partial_x + i \partial_y) f = 0 \quad & \text{in } \Omega^+, \\
            T(f|_\gamma) = g & \text{on } \gamma.
        \end{cases}
    \end{aligned}
\end{equation}
Conversely, if $f$ solves~\eqref{eq:complex_BVP}, then $u = \Re \widetilde f$ solves~\eqref{eq:real_BVP}, where $\widetilde f$ is a complex primitive of $f$ in $\Omega^+$ (which exists since $\Omega^+$ is simply connected). Since those analytic functions $f$ in $\Omega^+$ which have a trace $f|_\gamma \in L_2(\gamma)$ in the sense that
\begin{align*}
    \lim_{t \downarrow 0}\int_\gamma \big|f(e^{-t}w) - f|_\gamma(w)\big|^2\,|dw| = 0
\end{align*}
are precisely the Cauchy integrals of functions in $L_2^+(\gamma)$, we may rephrase~\eqref{eq:complex_BVP} as: 
\begin{equation}\label{eq:hardy_space_formulation_of_BVP}
    \text{find $f \in L_2^+(\gamma)$ such that $Tf = g$.}
\end{equation}
Thus, we can assume that $T$ is defined on $L_2^+(\gamma)$, and we fix $T : L_2^+(\gamma) \to L_2(\gamma;\R)$ such that $T$ is $\R$-linear, bounded, and has closed range.
\begin{example}[Neumann problem]\label{ex:interior_Neumann_problem}
    Let $\nu$ be the almost everywhere defined outward unit normal of $\Omega^+$, and choose 
    \begin{align*}
        Tf = \Re(\nu f).
    \end{align*}
    Let $f \in L_2^+(\gamma)$ and extend $f$ to $F$ in $\Omega^+$ through its Cauchy integral. Let $\widetilde F$ be a complex primitive of $F$ in $\Omega^+$ and define $u = \Re \widetilde F$. If $z \in \Omega^+$ and $w \in \gamma$, then, using the Cauchy-Riemann equations,
    \begin{align*}
        \nu(w) \cdot \nabla u(z) & = \nu_1(w) \partial_x (\Re \widetilde F)(z) -\nu_2(w)\partial_x (\Im \widetilde F)(z) \\
        & = \Re \big((\nu_1 + i \nu_2)(w) \partial_x (\Re \widetilde F + i \Im \widetilde F)(z)  \big) = \Re(\nu(w)F(z)).
    \end{align*}
    Letting $z \to w$ we obtain $\partial_\nu u = \Re(\nu f)$ on $\gamma$. Thus, we see that with $Tf = \Re(\nu f)$, \eqref{eq:hardy_space_formulation_of_BVP} is the complex version of the interior Neumann problem. As was shown first by Jerison and Kenig~\cite{JerisonKenig_1981}, this problem and the problem in Example~\ref{ex:interior_Dirichlet_problem} below are well-posed on Lipschitz domains if the data is taken in $L_2(\gamma;\R)\cap \{ \int_\gamma f\,d\sigma = 0\}$.
\end{example}
\begin{example}[Dirichlet regularity problem]\label{ex:interior_Dirichlet_problem}
    Let $\tau = i \nu$ be the almost everywhere defined positively oriented unit tangential vector of $\gamma$, and choose
    \begin{align*}
        Tf = \Re(\tau f).
    \end{align*}
    With the same notation as in Example~\ref{ex:interior_Neumann_problem}, we have 
    \begin{align*}
        \tau(w)\cdot \nabla u(z) = \Re(\tau(w)F(z)), \quad \quad z \in \Omega^+, w \in \gamma,
    \end{align*}
    and hence $\partial_\tau u = \Re(\tau f)$ on $\gamma$. Thus, with this choice of $T$, \eqref{eq:hardy_space_formulation_of_BVP} is the complex version of the interior Dirichlet regularity problem. Compared to the ordinary Dirichlet problem with boundary condition $u |_\gamma = g$, the Dirichlet regularity problem requires control over the tangential derivative of $u$; the boundary data $u|_\gamma$ is then recovered (up to a constant) by integrating $\partial_\tau u $ over $\gamma$. This problem is also well-posed when the data is taken in $L_2(\gamma;\R)\cap \{ \int_\gamma f\,d\sigma = 0\}$; see Jersion and Kenig~\cite{JerisonKenig_1981}.
\end{example}

We now describe our method of fundamental solutions for solving~\eqref{eq:hardy_space_formulation_of_BVP} using Lusin frames. By Proposition~\ref{prop:frames_for_hardy_spaces}, $\{\psi_j^+\}_{j = 1}^\infty$ is a frame for $L_2^+(\gamma)$; let $U^*$ be the corresponding synthesizing operator. Then every $f \in L_2^+(\gamma)$ has a frame expansion $f = U^*(c_j)_{j=1}^\infty = \sum_{j = 1}^\infty c_j \psi_j^+$. Writing $c_j = d_j + id_{-j}$ with $d_j \in \R$, and imposing the boundary condition $Tf = g$, then gives
\begin{align}\label{eq:bdry_cond_and_frame_expansion}
    g = T \sum_{j = 1}^\infty (d_j + id_{-j})\psi_j^+ =  \sum_{j = 1}^\infty d_jT\psi_j^+ + \sum_{j = 1}^\infty d_{-j} T i \psi_j^+ = \sum_{j \in \Z_0} d_{j} \varphi^+_j,
\end{align}
where $\Z_0 := \Z \setminus \{0\}$ and
\begin{equation}
\label{eq:real_wavelets_def}
    \varphi^+_j = \begin{cases}
        T\psi_j^+ \quad & \text{if } j > 0, \\
        Ti\psi_{-j}^+ & \text{if } j < 0.
    \end{cases}
\end{equation} 
By Lemma~\ref{lem:frames_under_closed_range_mappings}(ii), $\{\varphi^+_j\}_{j\in \Z_0}$ is a frame for $R(T)$, and hence~\eqref{eq:bdry_cond_and_frame_expansion} is simply the equation \\$U_T^*(d_j)_{j \in \Z_0} = g$, $(d_j)_{j \in \Z_0} \in \ell^2(\Z_0;\R)$, where $U^*_T$ is the synthesizing operator for $ \{\varphi^+_j\}_{j \in \Z_0}$. As $U_T^*$ is onto $R(T)$,~\eqref{eq:bdry_cond_and_frame_expansion} can be solved for $(d_j)_{j \in \Z_0}$ whenever $g \in R(T)$. This then yields a solution $f = U^*(d_j + id_{-j})_{j = 1}^\infty$ to~\eqref{eq:hardy_space_formulation_of_BVP}.

By Proposition~\ref{prop:no_lusin_basis}, $\{\psi_j^+\}_{j = 1}^\infty$ is not a basis for $L_2^+(\gamma)$; hence the real Lusin frame $\{\varphi^+_j\}_{j \in \Z_0}$ is not a basis for $R(T)$, and the coefficients $(d_j)_{j \in \Z_0}$ are therefore not unique. As we saw in~\eqref{eq:canonical_frame_expansion}, one possible choice for the coefficients is
\begin{equation}
    \label{eq:dual_frame_coefficients}
    d_j = (S_T^{-1}g, \varphi^+_j) = (g, S_T^{-1}\varphi^+_j),
\end{equation}
where $S_T = U_T^* U_T$ is the frame operator for $\{\varphi^+_j\}_{j \in \Z_0}$. Since this approach uses the invertible operator $S_T$ to compute the coefficients, in contrast to directly trying to solve the underdetermined system 
\begin{equation}\label{eq:underdetermined_system}
    U_T^* (d_j)_{j \in \Z_0} = g,
\end{equation}
one might expect to end up with an invertible matrix when implementing this method numerically. However, it turns out that truncating and discretizing $S_T$ directly leads to a highly ill-conditioned and non-invertible system. Therefore, we shall directly truncate and discretize~\eqref{eq:underdetermined_system} in Section~\ref{sec:numerical_results}.

\section{Truncation and numerical results}\label{sec:numerical_results}
In this section, we implement the algorithm outlined in Section~\ref{sec:bvp_theory} and present numerical results. 
We begin with a brief overview of the parameters used in this section. 
For precise definitions, we refer the reader to the opening paragraph of Section~\ref{sec:sfe}.
The fineness parameter $\varepsilon$ from Definition~\ref{def:whitney_set} governs the sample density of the source points, with $\varepsilon = 0$ representing a continuous frame and $\varepsilon > 0$ a discrete frame.
The parameter $\delta$ is roughly the maximal distance between a source point and the boundary $\gamma$.
The frame is truncated to a finite collection of Lusin wavelets $\psi_j$ with source points in a finite number of layers enumerated by $N_0, \dots, N$, where each source point $q_{j,l}$ in layer $l$ is given by 
\[q_{j,l} =(1+(1+\varepsilon)^{-l}) r_\gamma(\theta_{j,l})e^{i\theta_{j,l}},\]
where $\theta_{j,l} = -\pi + 2\pi j/n_l$ is the angle of $q_{j,l}$ and $n_l = \lceil 2\pi \varepsilon^{-1}(1+(1+\varepsilon)^l)\rceil$ is the number of source points in layer $l$. 
Thus $\cup_{l=N_0}^\infty \{q_{j,l}\}_{j=1}^{n_l} = \{q_j\}_{j=1}^\infty$ is an $\varepsilon'$-fine Whitney set with $\varepsilon' \simeq \varepsilon$ (see Example~\ref{ex:explicit_whitney_partition}).
In our implementation below, we always take $\varepsilon = 0.3$.
We fix $N_0 = 0$, which implicitly determines $\delta$, and let $N$ vary.
The total number of wavelets is then $s_N = \sum_{l=0}^N n_l$. 

With this notation fixed, we describe our algorithm: the \emph{Whitney method of fundamental solutions} (WMFS). 
We begin by considering the interior Neumann problem, as formulated in~\eqref{eq:hardy_space_formulation_of_BVP}, for the operator $Tf = \Re(\nu f)$; see Example~\ref{ex:interior_Neumann_problem}. In Examples~\ref{ex:smooth_data_u} and~\ref{ex:smooth_data_dirichlet}, we show how to modify the algorithm to solve the harmonic Neumann and Dirichlet problems as well.
The idea is to represent the boundary data $g = Tf$ in the truncated Lusin frame (see Theorem~\ref{thm:lusin_frame}) as
\begin{equation}
    \label{eq:g_finite_representation}
     \sum_{1 \le |j|\le s_N}d_j \varphi_j^+ = g,
\end{equation}
where $\varphi_j^+$ is defined in~\eqref{eq:real_wavelets_def}.
To compute the coefficients $(d_j)_{1\le|j|\le s_N}$, we pair~\eqref{eq:g_finite_representation} with boundary elements $\chi_m$, which yields the finite system of equations
\begin{equation}
    \label{eq:wmfs_system_of_eqs}
    \sum_{1 \le |j|\le s_N}d_j (\varphi_j^+, \chi_m) = (g, \chi_m), \quad \quad 1 \le m \le M.
\end{equation}
Here, $\chi_m = \chi_{\gamma_m}$ are indicator functions for the curve segments $\gamma_m \subset \gamma$ between the points $(w_m)_{m=1}^M \subset \gamma$, ordered such that $\Arg w_m$ is increasing.
More precisely,
\[\gamma_m = \begin{cases}
    \{r_\gamma(\theta)e^{i\theta} \in \gamma \:; \Arg w_{m-1} \le \theta < \Arg w_m\}, & 2\le m \le M, \\
    \{r_\gamma(\theta)e^{i\theta} \in \gamma \:;  -\pi \le \theta < \Arg w_1 \text{ or } \Arg w_M \le \theta \le \pi\}, & m=1.
\end{cases}\]
A simple choice of the points $(w_m)_{m=1}^M$ is to space them uniformly  (with respect to $\theta$) on $\gamma$. 
This works well when the solution $f$ has an analytic continuation outside of the domain $\Omega^+$; however, to allow for less regular data (as in Example~\ref{ex:nonsmooth_data}), we instead pick $(w_m)_{m=1}^M$ adapted to the source points $(q_j)_{j=1}^{s_N}$, so that each source point corresponds to $M_0 \in \Z_+$ $w$-points. 
Explicitly, writing $q_j= r_j r_\gamma(\theta_j)e^{i \theta_j}$, we take
\[\{w_m\} = \bigcup_{j=1}^{s_N}\{r_\gamma(\theta)e^{i\theta}\:; \theta \in \Theta_j^{M_0}\},\]
where
\[
    \Theta_j^{M_0} =  \biggl\{\theta_j \pm \frac{(r_j-1)}{M_0/2}, \theta_j \pm \frac{2(r_j-1)}{M_0/2}, \dots,  \theta_j \pm \frac{(r_j-1)M_0/2}{M_0/2}\biggr\}
\]
if $M_0$ is even, and
\[
    \Theta_j^{M_0} =  \biggl\{\theta_j, \theta_j \pm \frac{(r_j-1)}{(M_0-1)/2}, \dots, \theta_j \pm \frac{(r_j-1)(M_0-1)/2}{(M_0-1)/2}\biggr\}
\]
if $M_0$ is odd.
Using this setup, the matrix in~\eqref{eq:wmfs_system_of_eqs} is $M_0 s_N \times 2 s_N$.

One reason that we use indicator functions $\chi_m$ is that the inner products $(\varphi_j^+, \chi_m)$ in~\eqref{eq:wmfs_system_of_eqs} may be computed analytically (for the Neumann problem) by
\begin{multline}
\label{eq:real_wav_analytic_ip_plus}
    (\varphi_j^+, \chi_m) 
    = \int_{w_{m-1}}^{w_m} \varphi_j^+(w) \,|dw| 
    =\int_{w_{m-1}}^{w_m} \Re \bigl(\nu(w)\psi_j^+(w)\bigr) \,|dw| \\
    = b_j \Re \biggl(-i\int_{w_{m-1}}^{w_m} (w-q_j)^{-k-1}  \,dw\biggr) 
    = \frac{b_j}{k} \Im \bigl((w_{m-1}-q_j)^{-k}-(w_m-q_j)^{-k}\bigr)
\end{multline}
and similarly 
\begin{equation}
\label{eq:real_wav_analytic_ip_minus}
    (\varphi_{-j}^+, \chi_m) = \frac{b_j}{k} \Re \bigl((w_{m-1}-q_j)^{-k}-(w_m-q_j)^{-k}\bigr),
\end{equation}
for $j>0$. 
Here, $b_j$ are constants picked such that $\|\varphi_j^+\| \simeq 1$ for all $1 \le |j| \le s_N$.

The integrals $(g, \chi_m)$ on the right-hand side of~\eqref{eq:wmfs_system_of_eqs} are calculated numerically using 10-point Gauss-Legendre quadrature. 
As the coefficients $(d_j)_{1 \le |j|\le s_N}$ in~\eqref{eq:wmfs_system_of_eqs} may not be unique, we pick the coefficients with the minimal $\ell_2$ norm, which in MATLAB corresponds to the function \texttt{lsqminnorm}. 
This is a canonical choice, as the norm-minimizing coefficients in~\eqref{eq:g_finite_representation} for $M= s_N=\infty$ are those given by inner products with the dual frame of $(\varphi_j^+)_{j \in \Z_0}$; see~\eqref{eq:canonical_frame_expansion} and the remark after the equation.
A technical detail is that we set the tolerance of \texttt{lsqminnorm} to zero to disable the default low-rank approximation of the matrix in~\eqref{eq:wmfs_system_of_eqs}. 
Using the default settings leads to larger errors of several orders of magnitude in Example~\ref{ex:nonsmooth_data}.

In summary, the WMFS consists of two main steps. 
First, we solve for the coefficients $(d_j)_{1 \le |j|\le s_N}$ in~\eqref{eq:wmfs_system_of_eqs}, where the boundary integrals $(g, \chi_m) = \int_{w_{m-1}}^{w_m}g(w)\,|dw|$ are computed using Gauss-Legendre quadrature, and $(\varphi_j^+, \chi_m)$ are computed analytically using the formulas in~\eqref{eq:real_wav_analytic_ip_plus} and~\eqref{eq:real_wav_analytic_ip_minus}.
Second, we compute the approximate solution in the domain by evaluating 
\begin{equation}
\label{eq:step_2_wmfs}
    f(z) = \sum_{j=1}^{s_N}(d_j+id_{-j})\psi_j(z), \quad \quad z \in \Omega^+.
\end{equation}

Before moving on to numerical results, we make a few remarks about the algorithm.
\begin{remark}
Instead of pairing~\eqref{eq:g_finite_representation} with indicator functions as in~\eqref{eq:wmfs_system_of_eqs}, one may require~\eqref{eq:g_finite_representation} to hold on a set of so-called collocation points $(w_m)_{m=1}^M\subset \gamma$.
This is easier to implement and, in many cases, gives results comparable to the method described above using indicator functions.
However, we found that when source points are placed close to the boundary, using collocation points may result in less robust computations, with large fluctuations in the error as parameters $\varepsilon$ and $M_0$ are perturbed.
In contrast, the results for boundary elements~\eqref{eq:wmfs_system_of_eqs} are less sensitive to changes in parameters, especially for non-smooth data as in Example~\ref{ex:nonsmooth_data} below. 
\end{remark}
\begin{remark}\label{rmk:CC-method}
    A different variant of~\eqref{eq:wmfs_system_of_eqs} is to pair~\eqref{eq:g_finite_representation} with another (typically smaller) set of frame functions $(\varphi_m^+)_{1 \le |m| \le M}$, so that~\eqref{eq:wmfs_system_of_eqs} reads
\begin{equation}
    \label{eq:first_order_cc}
     \sum_{1\le |j|\le s_N}d_j (\varphi_j^+, \varphi_m^+) = (g, \varphi_m^+), \quad \quad 1 \le |m| \le M.
\end{equation}
In this case, given that $M \ll s_N$, one obtains an underdetermined system of equations which is equivalent to the \emph{Casazza-Christensen} (CC) method, a general method for determining coefficients of frame expansions; see Christensen~\cite[Thm. 23.2.3]{Christensen_2016}. 
The idea of the CC method is to approximate the inverse of the frame operator $S$ from Section~\ref{sec:bvp_theory} by
\[S^{-1} \approx (P_{M}S_{s_N})^{-1}P_M =: S_{\text{CC}}^{-1}\]
for $M \ll s_N$, where $P_{M}$ is the orthogonal projection mapping $L_2(\gamma ; \R)$ to $\Span \{\varphi_m^+\}_{1 \le |m| \le M}$ and
\[S_{s_N} : \Span \{\varphi_m^+\}_{1 \le |m| \le M} \to \Span \{\varphi_j^+\}_{1 \le |j| \le s_N}, \quad  S_{s_N}g = \sum_{1 \le |j| \le s_N} (g, \varphi_j^+) \varphi_j^+\]
is a truncated frame operator for $\{\varphi_j^+\}_{j \in \Z_0}$.
The coefficients $(d_j)_{1 \le |j| \le s_N}$ in~\eqref{eq:g_finite_representation} may then be computed from the formula
\begin{equation}
    \label{eq:second_order_cc_coeffs}
    d_j = (S_{CC}^{-1}g, \varphi_j^+), \quad \quad 1 \le |j| \le s_N.
\end{equation}
To see that this gives the same coefficients as in~\eqref{eq:first_order_cc}, note that $h := S_{\text{CC}}^{-1}g$ is the unique function in $\Span\{\varphi^+_m\}_{1\le|m|\le M}$ satisfying $P_M S_{s_N}h = P_M g$, or equivalently
\[(S_{s_N}h, \varphi_m^+) = (g, \varphi_m^+), \quad \quad 1 \le |m| \le M.\]
After expanding the sum $S_{s_N}h$ and applying the relation in~\eqref{eq:second_order_cc_coeffs}, \eqref{eq:first_order_cc} follows.

While this is a well-conditioned method in the sense that the condition number of $S_{\text{CC}}^{-1}$ is bounded (if $M  \ll  s_N$), see Jonsson and Timlin~\cite[Thm.~5.4.4, Fig~6.5]{JonssonTimlin_2024}, numerical experiments indicate that the convergence of this method is slower than that of the WMFS, though still exponential (in $L_2(\Omega^+)$ norm) for polynomial solutions.
Further, the error of the CC method is several orders of magnitude higher for points near the boundary than for points not too close to the boundary, even for solutions with an analytic continuation outside of the domain.
In addition, the $L_2(\Omega^+)$ error of the CC method plateaus at a higher point than for the WMFS, with the CC method never reaching errors below approximately $10^{-12}$ while the WMFS attains a minimum error of around $10^{-14}$; see Jonsson and Timlin~\cite[Figs.~6.1-6.4]{JonssonTimlin_2024}.
\end{remark}
\begin{remark}
    As mentioned in Section~\ref{sec:intro}, we do not present measures of numerical stability such as condition numbers or coefficient norms in this section, as a detailed analysis of the numerical stability of the WMFS is beyond the scope of this paper. 
    However, the size of coefficient norms is closely related to the accuracy of the solutions, as evaluating $f_a$ always carries with it an error of order $(\text{coefficient norm}) \times 10^{-16}$ due to round-off errors; see Barnett and Betcke~\cite{Barnett_2008}. 
    In Figures~\ref{fig:Ca} and~\ref{fig:Cb}, we observe differences in accuracy of several orders of magnitude, and indeed these differences are reflected in the sizes of the coefficient norms. 
    See also Remark~\ref{rmk:error_decay_rate} for a consequence of the same phenomenon. 
    Even though the coefficient norms are large in a few cases (which can be rectified by adaptive placement of source points), we consistently find that the coefficient norm does not grow with the number of source points. 
\end{remark}
In the Examples~\ref{ex:smooth_data} and~\ref{ex:nonsmooth_data} below, we evaluate the accuracy of the WMFS in the following way.
First, we select an analytic function $ f : \Omega^+ \to \C $ defined by an explicit formula. 
Next, we compute the Neumann trace $ g = \Re(\nu f) $ analytically. Then, using \( g \), we numerically estimate \( f \) by solving for the coefficients \( (d_j)_{1 \le |j| \le s_N} \) in~\eqref{eq:wmfs_system_of_eqs} and subsequently evaluating~\eqref{eq:step_2_wmfs}.
The error of the WMFS is then the absolute value of the difference between $f$ and the approximation~\eqref{eq:step_2_wmfs} of $f$.
To estimate the $L_\infty(\Omega^+)$ error, we compute the maximum pointwise error in the intersection of the domain with a $1000 \times 1000$ square grid.
In Example~\ref{ex:smooth_unknown_data}, we have no formula for the exact solution.
Therefore, we instead estimate the error of the WMFS by measuring the difference between solutions and an "exact" solution computed with a significantly higher number of source points.

In Examples~\ref{ex:smooth_data_u} and~\ref{ex:smooth_data_dirichlet}, we solve for a harmonic potential $u$ rather than a holomorphic function $f$. In these examples, we choose an explicit holomorphic function $f$, and then compute the Neumann data $g = \Re(\nu f)$ and the Dirichlet data $g = \Re(f)$. From $f$, we analytically compute the exact solutions $u$. The error of the WMFS is then estimated as in Examples~\ref{ex:smooth_data} and~\ref{ex:nonsmooth_data} by computing the absolute difference between $u$ and the WMFS approximations~\eqref{eq:u_formula_wmfs} and~\eqref{eq:dir_u_wmfs_representation} of $u$.

\begin{example}[Smooth data and domain]
\label{ex:smooth_data}
Consider the domain parameterized by $r_\gamma(\theta) = 3+\cos(4\theta)$, with boundary data $g = \Re(\nu f_1)$, where $f_1(z) = e^{z/3-iz/10}\sin (z/3)$ is an entire function.
In Figure~\ref{fig:Aa}, we display by a color plot the pointwise error of the WMFS for $s_N = 148$ source points distributed over three layers, with $M_0=5$, so that the matrix in~\eqref{eq:wmfs_system_of_eqs} is $740 \times 296$.
As we see in Figure~\ref{fig:Ab}, increasing the number of source points from this point does not further decrease the error.
In contrast to the BIE method discussed in Section~\ref{sec:intro}, the error of the WMFS in Figure~\ref{fig:Aa} is mostly uniform in the domain, all the way up to the boundary. 
In particular, this means that all $L_p(\Omega^+)$ errors for $1 \le p \le \infty$ are, up to a few orders of magnitude, comparable in size. 
Therefore, we only present the $L_\infty(\Omega^+)$ error in Figure~\ref{fig:Ab} and all the following figures.

\begin{figure}
     \centering
     \begin{subfigure}[t]{0.4\textwidth}
         \centering
         \includegraphics[width=\textwidth]{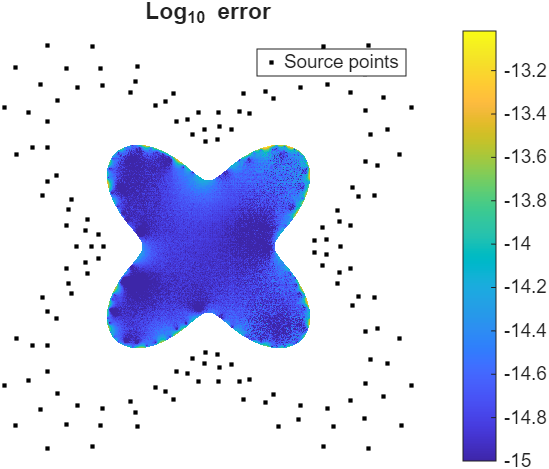}
         \caption{Distribution of source points and pointwise error as a color plot for the WMFS with 148 source points in three layers. Here, $M_0=5$. 
         }
         \label{fig:Aa}
     \end{subfigure}
     \hfill
     \begin{subfigure}[t]{0.4\textwidth}
         \centering
         \includegraphics[width=\textwidth]{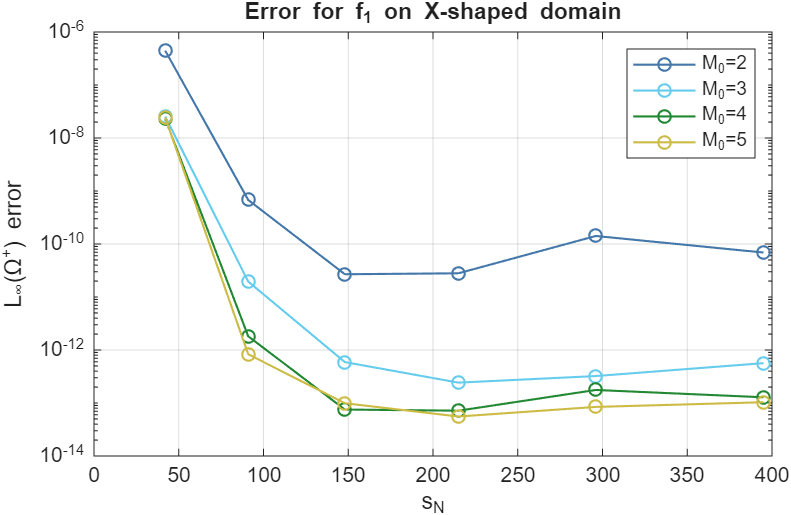}
         \caption{$L_\infty(\Omega^+)$ error of the WMFS to the number of source points $s_N$ for different values of $M_0$.}
         \label{fig:Ab}
     \end{subfigure}
        \caption{Pointwise and $L_\infty(\Omega^+)$ error of the WMFS for the solution $f_1(z) = e^{z/3-iz/10}\sin (z/3)$.}
        \label{fig:x_entire_data}
\end{figure}

An additional conclusion from Figure~\ref{fig:Ab} is that solving an overdetermined system in~\eqref{eq:wmfs_system_of_eqs}, which corresponds to $M_0 \ge 3$, is necessary for the error to reach levels close to machine epsilon $\approx 2 \times 10^{-16}$. 
For such choices of $M_0$, the WMFS has an exponential rate of convergence in the number of source points, at least when the solution is an entire function.
In the semi-logarithmic plot shown in Figure~\ref{fig:Ab}, such exponential convergence corresponds to linear error curves.
\end{example}

\begin{example}[Harmonic Neumann problem]\label{ex:smooth_data_u}
In Example~\ref{ex:smooth_data}, we used the WMFS to compute the conjugate gradient $f$ of the potential $u$ solving the Neumann problem~\eqref{eq:neumann_problem}.
In applications, however, one might be more interested in computing $u$.
As $u = \Re \widetilde f$, where $\widetilde f$ is a complex primitive of $f$, one can explicitly compute $u$ by modifying~\eqref{eq:step_2_wmfs} in the following way:
given the coefficients $(d_j)_{1 \le |j| \le s_N}$, we find a complex primitive of $f$ by
\[\widetilde f(z) = -\frac{1}{k}\sum_{j=1}^{s_N}b_j(d_j + i d_{-j})(z-q_j)^{-k}, \quad \quad z \in \Omega^+,\]
from which we obtain the formula
\begin{equation}
    \label{eq:u_formula_wmfs}
    u(z) = \Re \biggl(-\frac{1}{k}\sum_{j=1}^{s_N}b_j(d_j + i d_{-j})(z-q_j)^{-k} \biggr), \quad \quad z \in \Omega^+.
\end{equation}
In Figure~\ref{fig:ABa}, we display the error of the WMFS for the same domain and boundary data as in Example~\ref{ex:smooth_data}, but for the corresponding potential $u_1$ instead of $f_1$.
As the computed potential may differ from $u_1$ by a constant, we modify~\eqref{eq:u_formula_wmfs} by adding a constant chosen so that the average error in a $200 \times 200$ grid in the center of the domain in Figure~\ref{fig:ABa} vanishes.
The minimal $L_\infty(\Omega^+)$ error for the potential $u_1$ is attained with 148 source points and $M_0 = 5$, at which point the error is roughly $10^{-14}$.
    \begin{figure}
     \centering
     \begin{subfigure}[t]{0.4\textwidth}
         \centering
         \includegraphics[width=\textwidth]{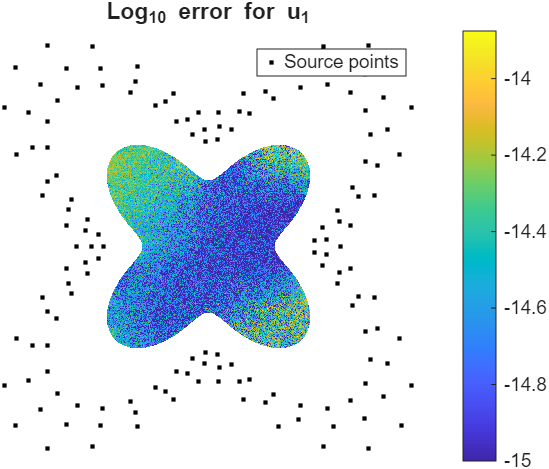}
         \caption{Error of the WMFS for the harmonic Neumann problem. 
         }
         \label{fig:ABa}
     \end{subfigure}
     \hfill
     \begin{subfigure}[t]{0.4\textwidth}
         \centering
         \includegraphics[width=\textwidth]{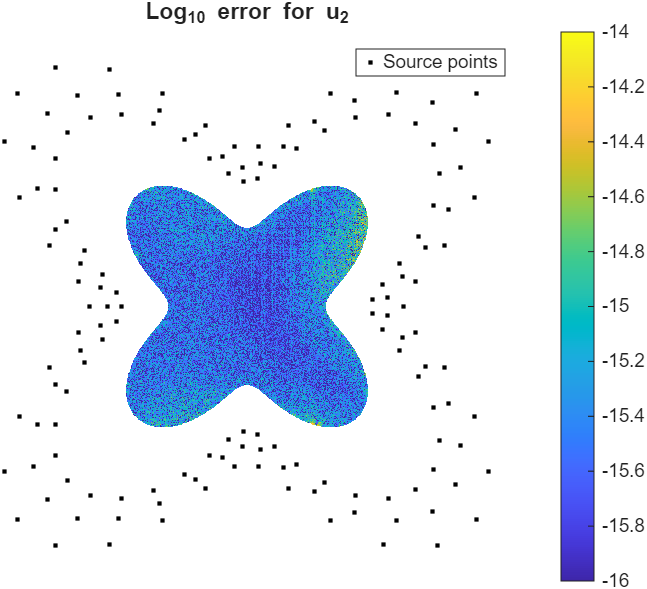}
         \caption{Error of the WMFS for the harmonic Dirichlet problem.}
         \label{fig:ABb}
     \end{subfigure}
        \caption{Pointwise errors of the WMFS for (\textsc{a}) Neumann data $g = \Re(\nu f_1)$, and (\textsc{b}) Dirichlet data $g = \Re(f_1)$, where $f_1(z) = e^{z/3-iz/10}\sin(z/3)$. In both cases, $s_N = 148$ and $M_0 = 5$.}
        \label{fig:x_entire_data_u}
\end{figure}
\end{example}

\begin{example}[Harmonic Dirichlet problem]\label{ex:smooth_data_dirichlet}
As in Example~\ref{ex:smooth_data_u}, we can modify the numerical algorithm to instead solve the harmonic Dirichlet problem and find the potential $u : \Omega^+ \to \R$ with prescribed boundary data $g = u|_\gamma$.
The algorithm proceeds as follows.
Given a harmonic function $u$, there exists a conjugate function $v$ such that $\widetilde f = u + iv$ is holomorphic.
Consequently, $\widetilde f$ may be approximated by Lusin wavelets 
\[\widetilde f(z) \approx \widetilde f_a(z) =\sum_{j=1}^{s_N} (d_j + i d_{-j}) \frac{b_j}{(z-q_j)^{k+1}}, \quad \quad z \in \Omega^+,\]
which, after taking real parts, yields the approximation $u_a$ of $u$ given by
\begin{equation}
    \label{eq:dir_u_wmfs_representation}
    u_a(z) = \sum_{1 \le |j| \le s_N}d_j \widetilde \varphi_j^+(z), \quad \quad z \in \Omega^+,
\end{equation}
where 
\[\widetilde \varphi_j^+(z) = \begin{cases}
     b_j\Re( (z-q_j)^{-(k+1)}) \quad & \text{if }j>0, \\
     b_{-j}\Re ( i(z-q_{-j})^{-(k+1)}) & \text{if }j<0.
\end{cases}\]
We compute the coefficients $(d_j)_{1 \le |j| \le s_N}$ by solving the system
\begin{equation}
    \label{eq:dir_g_wmfs_representation}
    g(w_m) = \sum_{1\le |j| \le s_N}^{s_n} d_j \widetilde \varphi_j^+(w_m), \quad \quad 1 \le m \le M,
\end{equation}
obtained by evaluating~\eqref{eq:dir_u_wmfs_representation} in a set of collocation points $\{w_m\}_1^M \subset \gamma$.
The approximate solution $u_a$ is then obtained by~\eqref{eq:dir_u_wmfs_representation}.
While it is possible to solve for $(d_j)_{1 \le |j| \le s_N}$ using boundary elements as in~\eqref{eq:wmfs_system_of_eqs}, this is not as efficient as in the other numerical examples, since we do not have an analytic formula for the inner products $(\widetilde \varphi_j^+, \chi_m)$.
Because of this, we use collocation points instead.

Figure~\ref{fig:ABb} displays the pointwise error of the WMFS for the same domain as in Example~\ref{ex:smooth_data}, with boundary data
\[g(w) = \Re(f_1(w)) = \Re(e^{z/3-iz/10}\sin(z/3)).\]
We use 148 source points and $M_0 = 5$, which means that 740 collocation points are used. 
This yields a maximal error of roughly $7 \times 10^{-15}$.
Further increasing the number of source points, or the value of $M_0$, does not reduce the error beyond this point.
\end{example}

\begin{example}[Non-smooth data and domain] \label{ex:nonsmooth_data}
Consider the square domain $\Omega^+ = \{x+iy\:;-1 < x, y < 1\}$. 
In Figure~\ref{fig:square_smooth_vs_nonsmooth}, we see the $L_\infty(\Omega^+)$ error of the WMFS, where the solution is either an entire function $f_1(z) = e^{z/3-iz/10}\sin (z/3)$ or a non-smooth function $f_2(z) = \sqrt{z-p}$, where $\sqrt{\cdot}$ denotes the principal branch of the square root. 
Here, the non-differentiable point $p = -1-i/10$ of $f_2$ is a point on $\gamma$.
To ensure that the inner products $(g, \chi_m)$ are computed accurately, we place points $w_m$ at the corners of $\gamma$, so that the segments $\gamma_m$ partitioning $\gamma$ are always smooth, as well as one point at $p$.
Just like for the smooth domain, we see in Figure~\ref{fig:Ba} that the WMFS converges exponentially when the solution is an entire function, even though $\gamma$ is not smooth.
For the non-smooth solution in Figure~\ref{fig:Bb}, we still observe convergence, however, at a significantly lower speed than for the entire solution.
The linear curve in the log-log plot in Figure~\ref{fig:Bb} for $M_0=5$ corresponds to a polynomial convergence rate.
Again, overdetermined systems, that is, where $M_0 \ge 3$, result in lower minimum errors for both the smooth and non-smooth solutions.
\begin{figure}
     \centering
     \begin{subfigure}[t]{0.4\textwidth}
         \centering
         \includegraphics[width=\textwidth]{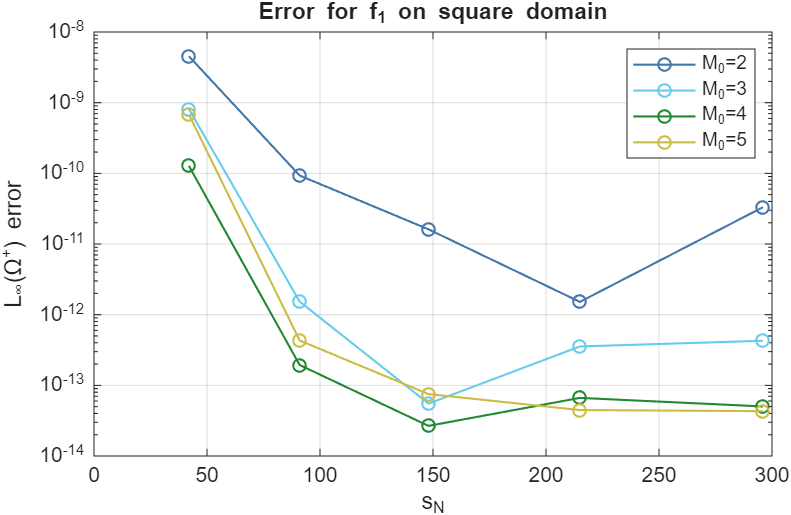}
         \caption{$L_\infty(\Omega^+)$ error for the solution $f_1(z) = e^{z/3-iz/10}\sin (z/3)$.}
         \label{fig:Ba}
     \end{subfigure}
     \hfill
     \begin{subfigure}[t]{0.4\textwidth}
         \centering
         \includegraphics[width=\textwidth]{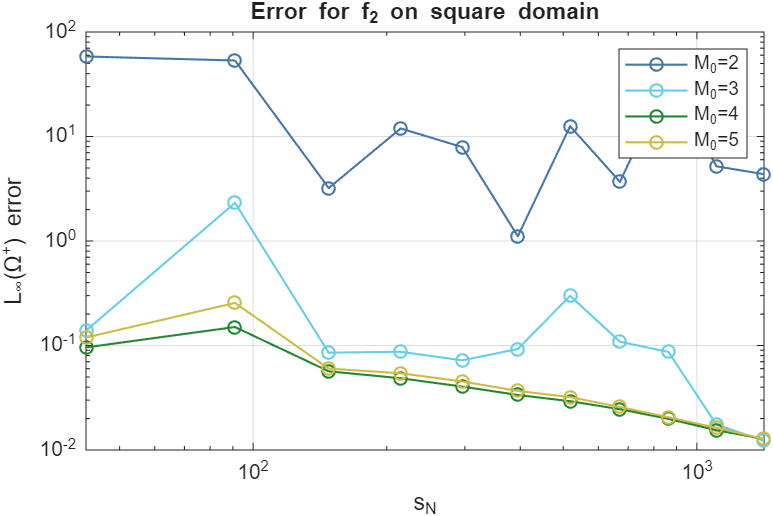}
         \caption{$L_\infty(\Omega^+)$ error for the solution $f_2(z) = \sqrt{z+1+i/10}$.}
         \label{fig:Bb}
     \end{subfigure}
        \caption{Comparison of $L_\infty(\Omega^+)$ errors of the WMFS for (\textsc{a}) an entire solution and (\textsc{b}) a non-smooth solution on the square domain, for different values of $M_0$ and varying number of source points $s_N$. Note that (\textsc{a}) is a semi-logarithmic plot while (\textsc{b}) is a log-log plot.}
        \label{fig:square_smooth_vs_nonsmooth}
\end{figure}

To improve the convergence in the non-smooth case, we add Lusin wavelets localized near the singularity $p$ of $f_2$, as this is where the absolute error in Figure~\ref{fig:Ca} is the largest.
Geometrically, such wavelets correspond to source points in a cone whose apex is at $p$ (see Figure~\ref{fig:Cb}).
The $L_\infty(\Omega^+)$ error for this method is displayed in Figure~\ref{fig:Da}.
Using this scheme, the $L_\infty(\Omega^+)$ error decreases until it reaches a minimum at around $s_N=700$ source points, after which the error is approximately $10^{-12}$. 
As the error curves in the semi-logarithmic plot in Figure~\ref{fig:Da} are linear, we further conclude that the rate of convergence is exponential.

\begin{figure}
     \centering
     \begin{subfigure}[t]{0.4\textwidth} 
         \centering
         \includegraphics[width=\textwidth]{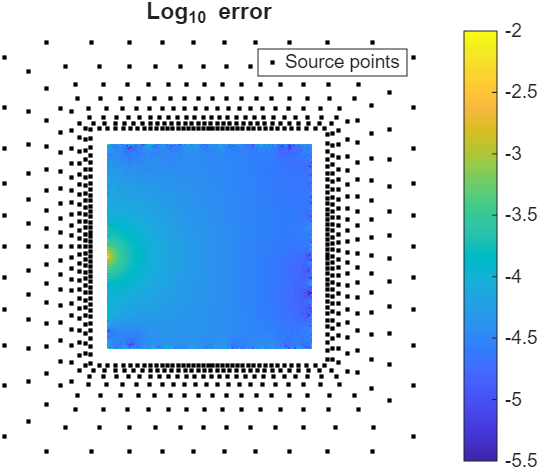}
         \caption{Error plot using a Whitney distribution of $s_N = 671$ source points in eight layers, with $M_0=5$.}
         \label{fig:Ca}
     \end{subfigure}
     \hspace{0.05\textwidth}  
     \begin{subfigure}[t]{0.4\textwidth}
         \centering
         \includegraphics[width=\textwidth]{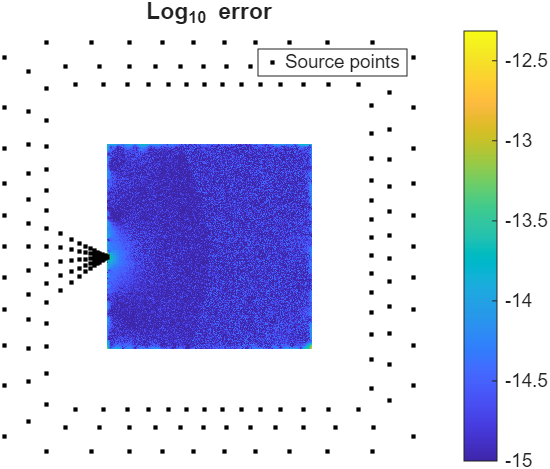}
         \caption{Error plot using a distribution of $s_N = 448$ source points, with 148 placed in three outer layers and the rest arranged in a cone whose apex is at the singularity of the solution. Here, $M_0=5$.}
         \label{fig:Cb}
     \end{subfigure}
        \caption{Comparison of absolute error of the WMFS as color plots for the solution $f_2(z) = \sqrt{z+1+i/10}$, with source points according to (\textsc{a}) a standard truncated Whitney distribution or (\textsc{b}) with further source points added in a cone whose apex is at $-1-i/10$. Note the different color scales in (\textsc{a}) and (\textsc{b}).} 
        \label{fig:square_whitney_vs_cone}
\end{figure}

Note that adding Lusin wavelets with source points as in Figure~\ref{fig:Cb} uses that frame expansions are unconditionally convergent.
Depending on the function $f$, and by extension, its boundary data $g$, it may be appropriate to use a particular exhaustive sequence when computing the frame expansion of $f$.
In the case of non-smooth boundary data, adding more source points close to the singularity appears beneficial, as shown in Figure~\ref{fig:Cb}.

To finish this example, we compare these results with the classical method of fundamental solutions, where one places the source points $(q_j)_{j=1}^{s_N}$ of the Lusin wavelets $(\psi_j)_{j=1}^{s_N}$ uniformly on a curve in $\Omega^-$. 
Explicitly, we take
\begin{equation}
    \label{eq:mfs_source_points}
    q_j = r_{\text{MFS}}r_\gamma({2 \pi j}/{s_N})e^{i \frac{2 \pi}{s_N}j}, \quad \quad 1 \le j \le s_N,
\end{equation}
where the parameter $r_{\text{MFS}}$ specifies the (approximate) distance of the source points to the boundary.
An implementation of this method for the solution $f_2$ is displayed in Figure~\ref{fig:Db}, where we see that the corresponding error never reaches below $10^{-3}$ in Figure~\ref{fig:Db}, regardless of the choice of $r_{\text{MFS}}$.

\begin{figure}
     \centering
     \begin{subfigure}[t]{0.4\textwidth}
         \centering
         \includegraphics[width=\textwidth]{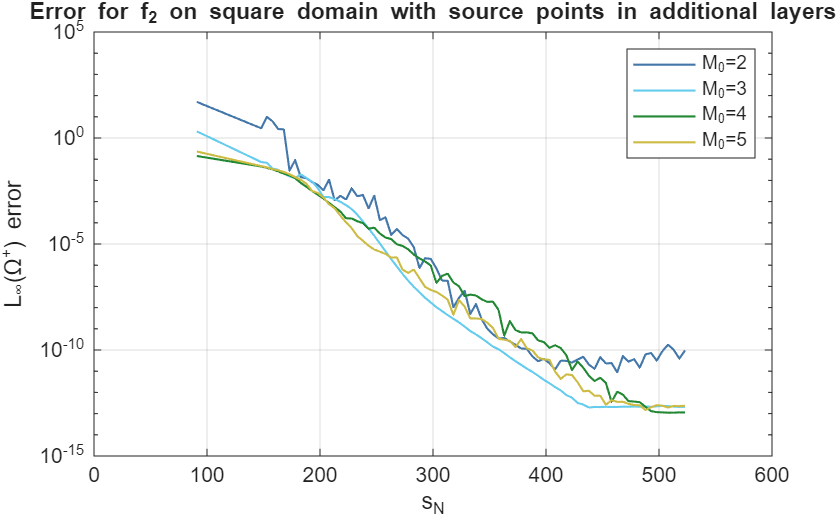}
         \caption{$L_\infty(\Omega^+)$ error for the WMFS when additional source points are placed in a cone as in Figure~\ref{fig:Cb}. The first 148 source points are in three layers of a truncated Whitney set.}
         \label{fig:Da}
     \end{subfigure}
     \hfill
     \begin{subfigure}[t]{0.4\textwidth}
         \centering
         \includegraphics[width=\textwidth]{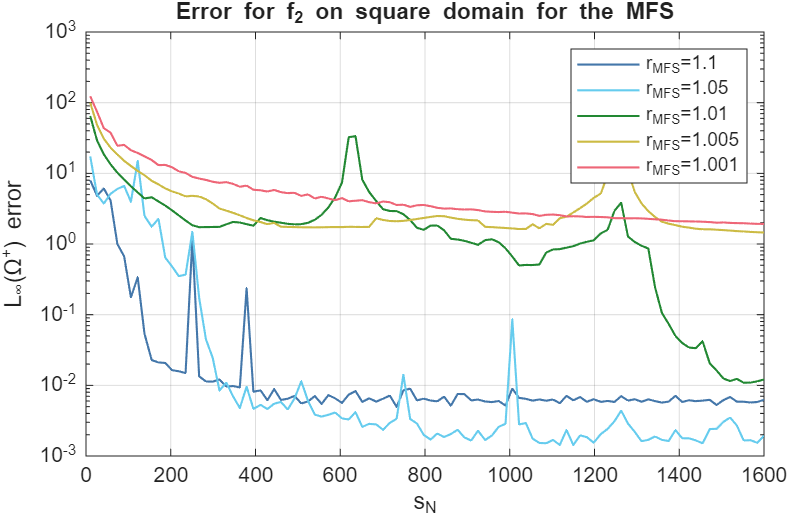}
         \caption{$L_\infty(\Omega^+)$ error for the MFS for various choices of $r_{\text{MFS}}$.}
         \label{fig:Db}
     \end{subfigure}
        \caption{Comparison of $L_\infty(\Omega^+)$ errors for a non-smooth solution $f_2$ on the square domain, when the source points are either (\textsc{a}) placed in a Whitney configuration plus a cone as in Figure~\ref{fig:Cb}, or (\textsc{b}) according to classical MFS configurations along a curve as in~\eqref{eq:mfs_source_points}.}
        \label{fig:square_conewmfs_vs_mfs}
\end{figure}

\end{example}

\begin{example}
\label{ex:smooth_unknown_data}
    Consider again the square domain, and let 
    \begin{equation}
        \label{eq:unknown_bdry_data}
        g_3(x+iy) = \Re \bigl(\nu(x+iy)(xy^2 \sin(y) - i \sin(x^3) \cos(x y)) \bigr) - c,
    \end{equation}
    where $c \in \R$ is a constant chosen so that $\int_\gamma g(w)\,|dw| = 0$.
    For this boundary data, we have no explicit formula for the solution $f_3$ satisfying $\Re(\nu f_3) = g_3$ on $\gamma$. 
    To gauge the accuracy of the WMFS in this case, we estimate the error of the solutions by comparing them to a reference solution $\widetilde f_3$, computed using a high number of source points.
    More precisely, the "exact" solution $\widetilde f_3$ is calculated with 863 source points in nine outer layers and with 600 source points in four cones with the corners of the square domain as their apexes, as in Figure~\ref{fig:Ea}.
    So, in total, $s_N = 1463$ for $\widetilde f_3$.
    Like in Example~\ref{ex:nonsmooth_data}, we allocate additional source points in cones to increase the rate of convergence. 
    In Figure~\ref{fig:Eb}, we see that the estimated error decreases exponentially since the error curve is linear in the semi-logarithmic plot.
    An estimated $L_\infty(\Omega^+)$ error of approximately $7 \times 10^{-13}$ is achieved at $s_N = 556$.
    The estimated error of this solution is displayed as a color plot in Figure~\ref{fig:Ea}.

    \begin{remark}\label{rmk:error_decay_rate}
        Although placing additional source points in cones with the corners of the domain as their apexes accelerates convergence, it is not necessary for convergence.
        If source points are simply placed in layers of a truncated Whitney partition, the estimated $L_\infty(\Omega^+)$ error still decreases, however only at a polynomial rate, reaching a level of $10^{-4}$ at $s_N = 1814$.
    \end{remark}
    \begin{remark}        
    In~\eqref{eq:unknown_bdry_data}, the boundary data is of the form $g = \Re(\nu h)$, where $h$ is a smooth but non-harmonic function.
    This type of boundary data is common in applications.
    However, if one instead takes $g$ to be a smooth function, we still observe exponential convergence as in Figure~\ref{fig:Eb}, with a minimum estimated $L_\infty(\Omega^+)$ error at around $10^{-12}$.
    \end{remark}

\begin{figure}
     \centering
     \begin{subfigure}[t]{0.4\textwidth}
         \centering
         \includegraphics[width=\textwidth]{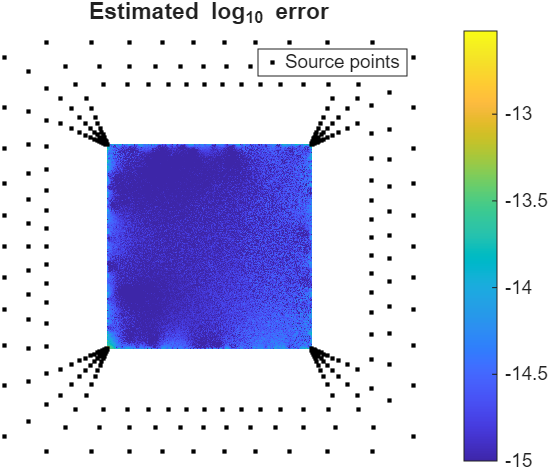}
         \caption{Distribution of source points and estimated pointwise error as a color plot for the WMFS with 556 source points, where 148 are in three outer layers and 408 are in four cones. Here, $M_0=5$. 
         }
         \label{fig:Ea}
     \end{subfigure}
     \hfill
     \begin{subfigure}[t]{0.4\textwidth}
         \centering
         \includegraphics[width=\textwidth]{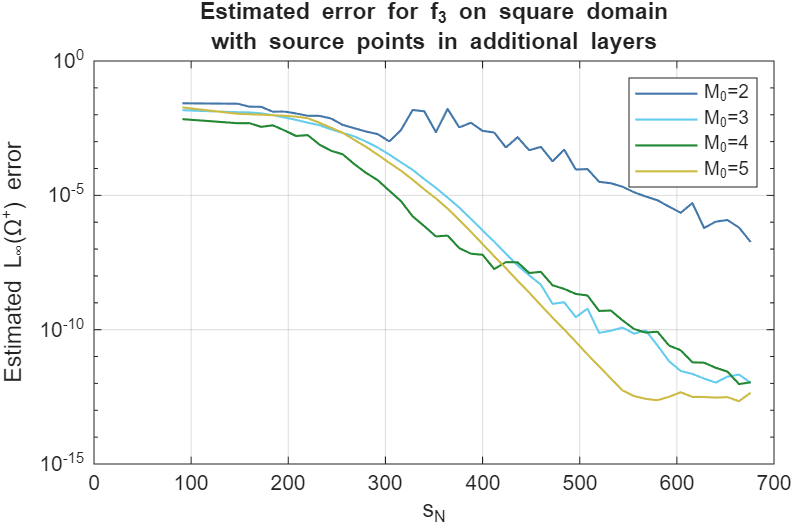}
         \caption{$L_\infty(\Omega^+)$ error of the WMFS to the number of source points $s_N$ for different values of $M_0$.}
         \label{fig:Eb}
     \end{subfigure}
        \caption{Estimated pointwise and $L_\infty(\Omega^+)$ error of the WMFS for the boundary data~\eqref{eq:unknown_bdry_data}.}
        \label{fig:no_holo_extension}
\end{figure}
\end{example}

To summarize, the WMFS is a simple and flexible algorithm that addresses the main downside of the classical MFS, numerical instability, without sacrificing the rapid rate of convergence, nor the uniform error up to the boundary.
Further, the WMFS can be applied even when the solution $f$ cannot be analytically continued to some neighborhood of $\overline{\Omega^+}$.
To improve the convergence rate in this case, one can use source points from a subset of the Whitney set from Example~\ref{ex:explicit_whitney_partition} adapted to $f$, as in Figure~\ref{fig:Cb}.

\newpage
\begin{appendices}
\section{Hardy splitting}
\label{appendix:hardy_splitting}
In this appendix, we prove Theorem~\ref{thm:hardy_splitting} from known estimates of the operator $BD$. According to Proposition~\ref{prop:bd_facts}, the spectrum of $BD$ is contained in the bisector $S_\omega$. Fix an angle $\mu \in (\omega, \pi/2)$ and define $S_\mu$ analogously to $S_\omega$, so that $S_\mu$ is the closed bisector with corresponding angle $\mu$. By $S_\mu^o$, we mean the interior of $S_\mu$. We define 
\begin{gather*}
    H(S^o_\mu) = \{\text{holomorphic functions in } S^o_\mu\}, \\
    H_\infty(S^o_\mu) = \{h \in H(S^o_\mu) \:; \|h\|_{\infty} < \infty\},
\end{gather*}
and we let $\H = R(BD)$. Note that $BD |_\H : \H \to \H$ is an injective operator since we split off the constants $N(BD)$; see Proposition~\ref{prop:bd_facts}. The main property of $BD$ that we need to prove Theorem~\ref{thm:hardy_splitting} is that $BD$ has a bounded holomorphic functional calculus:
\begin{theorem}\label{thm:functional_calculus}
    Let $\mathcal{L}(\H)$ denote the algebra of bounded operators on $\H$. There exists a bounded homomorphism 
    \begin{align*}
        H_\infty(S^o_\mu) \longrightarrow \mathcal{L}(\H), \quad h \longmapsto h(BD). 
    \end{align*}
\end{theorem}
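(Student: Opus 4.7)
The plan is to follow the standard McIntosh construction of a bounded holomorphic functional calculus for a bisectorial operator on a Hilbert space, leveraging the resolvent bounds of Proposition~\ref{prop:bd_facts} together with quadratic estimates. Fix $\nu \in (\omega, \mu)$ and let $\Psi(S_\mu^o) \subset H_\infty(S_\mu^o)$ denote the subalgebra of functions $\psi$ satisfying $|\psi(\lambda)| \le C_\psi \min(|\lambda|^s, |\lambda|^{-s})$ for some $s>0$. For such $\psi$ I would define the operator $\psi(BD) \in \mathcal{L}(\H)$ by the Dunford--Riesz integral
\[
\psi(BD) = \frac{1}{2\pi i}\int_{\partial S^o_\nu} \psi(\lambda)(\lambda - BD)^{-1}\,d\lambda,
\]
with the contour oriented so as to wind once around $S_\omega$. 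The resolvent bound~\eqref{eq:resolvent_estimates} together with the decay of $\psi$ makes the integral absolutely convergent in operator norm, and Cauchy's theorem plus the resolvent identity show that $\psi \mapsto \psi(BD)$ is an algebra homomorphism from $\Psi(S_\mu^o)$ into $\mathcal{L}(\H)$, independent of the choice of $\nu$.

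The main obstacle is the quadratic estimate
\[
\int_0^\infty \|\psi(tBD)f\|^2\,\frac{dt}{t} \simeq \|f\|^2, \quad f \in \H,
\]
valid for some (equivalently, any) non-zero $\psi \in \Psi(S_\mu^o)$. For $D$ itself this is elementary: since $D = -i\partial_\theta$ is self-adjoint on $L_2(\T)$ with spectrum $\Z$, the spectral theorem reduces it to a Parseval identity for Fourier coefficients, and the estimate holds on $\overline{\ran(D)}$, which is the orthogonal complement of the constants. The passage to $BD$, where $B$ is scalar multiplication by an $L_\infty(\T)$ function with $\inf_\theta \Re b > 0$ (so $B$ is bounded, invertible, and accretive), is the deeper Hilbert-space content. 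In this one-dimensional, scalar-coefficient setting the estimate can be derived by Kato-type perturbation arguments from the Coifman--McIntosh--Meyer bound for conjugated singular integrals, and is a direct consequence of the general framework in Auscher, Axelsson and McIntosh~\cite{AAMc_2008} developed around the Proposition~3.3 cited in the excerpt.

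Once the quadratic estimate is in hand, the calculus extends from $\Psi(S_\mu^o)$ to all of $H_\infty(S_\mu^o)$ by a Calder\'on-type reproducing formula. Concretely, picking $\psi_0(\lambda) = \lambda(1+\lambda^2)^{-1} \in \Psi(S_\mu^o)$, the quadratic estimate yields a constant $C$ with
\[
f = C \int_0^\infty \psi_0(tBD)^2 f\,\frac{dt}{t}, \quad f \in \H.
\]
For $h \in H_\infty(S_\mu^o)$ the product $h\psi_0$ still lies in $\Psi(S_\mu^o)$, so the formula
\[
h(BD)f = C\int_0^\infty (h\psi_0)(tBD)\,\psi_0(tBD)f\,\frac{dt}{t}
\]
defines a candidate operator. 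Cauchy--Schwarz combined with the upper quadratic estimate for $h\psi_0$ (with constant proportional to $\|h\|_\infty^2$) and the lower estimate for $\psi_0$ yields $\|h(BD)\|_{\mathcal{L}(\H)} \lesssim \|h\|_\infty$. The homomorphism property, and the fact that this construction agrees with the Dunford integral on $\Psi(S_\mu^o)$, follow from standard manipulations with the resolvent identity and Fubini-type convergence arguments; only the quadratic estimate in the middle stage requires real analytic work.
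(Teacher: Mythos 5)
Your overall architecture is the same as ours: define $\psi(BD)$ for decaying symbols by the Dunford integral using the resolvent bounds~\eqref{eq:resolvent_estimates}, take a Coifman--McIntosh--Meyer-type quadratic estimate (our Theorem~\ref{thm:abstract_sfe}) as the analytic input cited from the literature, and then run the standard McIntosh machinery to get $\|h(BD)\|\lesssim\|h\|_\infty$ on $\H$, extending to all of $H_\infty(S^o_\mu)$ at the end (we do this last step via the convergence lemma, Lemma~\ref{lem:convergence_lemma}). Up to the extension step the proposal is fine.

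The concrete problems are in that final step. First, the displayed formula $h(BD)f = C\int_0^\infty (h\psi_0)(tBD)\,\psi_0(tBD)f\,\frac{dt}{t}$ dilates $h$ along with $\psi_0$: the symbol of $(h\psi_0)(tBD)$ is $\lambda\mapsto h(t\lambda)\psi_0(t\lambda)$, and since $\int_0^\infty h(t\lambda)\psi_0(t\lambda)^2\,\frac{dt}{t}$ is independent of $\lambda$ (substitute $s=t\lambda$ and deform the contour), your formula produces a constant multiple of the projection of $f$ onto $\H$, not $h(BD)f$. The correct formula keeps $h$ undilated: $h(BD)f=C\int_0^\infty\bigl(h(\cdot)\psi_0(t\cdot)\bigr)(BD)\,\psi_0(tBD)f\,\frac{dt}{t}$. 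Second, with the corrected formula, the bound you invoke --- ``the upper quadratic estimate for $h\psi_0$ with constant $\lesssim\|h\|_\infty^2$'' --- is not an instance of the quadratic estimate already in hand: the family $t\mapsto\bigl(h(\cdot)\psi_0(t\cdot)\bigr)(BD)$ is not of the form $\phi(tBD)$ for a fixed $\phi$, and the estimate $\int_0^\infty\|\bigl(h(\cdot)\psi_0(t\cdot)\bigr)(BD)f\|^2\,\frac{dt}{t}\lesssim\|h\|_\infty^2\|f\|^2$ is exactly the nontrivial point. Likewise, if you instead pair against $g$ and apply Cauchy--Schwarz, the second factor is a square function in $(BD)^*=D\overline{B}$, so you need quadratic estimates for the adjoint, which you never address (they do hold here, since $D\overline{B}=\overline{B}^{-1}(\overline{B}D)\overline{B}$ is similar to an operator of the same class as $BD$). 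The standard repairs are either that adjoint/duality argument, or the Schur-type off-diagonal bound $\|\psi_0(sBD)\bigl(h(\cdot)\psi_0(t\cdot)\bigr)(BD)\|\lesssim\|h\|_\infty\min(s/t,t/s)^\alpha$, which follows from the resolvent bounds alone and, combined with the two-sided estimate of Theorem~\ref{thm:abstract_sfe}, closes the argument; this is precisely the abstract Schur estimate we cite from Ros\'en, Keith, and McIntosh. So the route is right, but the step you dismissed as standard manipulation is where the actual work (or citation) is needed, and as written it does not go through.
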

Theorem~\ref{thm:functional_calculus} is a corollary of Theorem~\ref{thm:abstract_sfe} below. See, for example, Rosén, Keith, and McIntosh~\cite[Prop.~4.8, part (iv) of proof]{AKM_2006} for the abstract Schur estimate used. For $h \in H_\infty(S_\mu^o)$ with decay $|h(\lambda)| \lesssim \min(|\lambda|^s,|\lambda|^{-s})$ for some $s > 0$, $h(BD)$ is defined by 
\begin{align*}
    h(BD) = \frac{1}{2\pi i} \int_\Gamma \frac{h(w)}{w-BD}\,dw,
\end{align*}
where $\Gamma = \{re^{\pm i\theta}; r  \geq 0\} \cup \{re^{i(\pi\pm\theta)}; r  \geq 0\}$, $\omega < \theta < \mu$, parametrized counterclockwise around the two components. This definition is then extended to all of $H_\infty(S_\mu^0)$ using Lemma~\ref{lem:convergence_lemma} below. In fact, $BD$ has a functional calculus for a wider class of unbounded holomorphic symbols. More precisely, there is a homomorphism $h \mapsto h(BD)$ from the class of $h \in H(S^0_\mu)$ that satisfies a growth estimate $|h(\lambda)| \leq C(|\lambda|^{-s}+ |\lambda|^s)$ for some $s <\infty$, to the set of densely defined closed unbounded operators on $\H$. This functional calculus is compatible with the forming of polynomials of $BD$, that is, $(\lambda^k)(BD) = (BD)^k$, and extends the $H_\infty(S_\mu^0)$ functional calculus of Theorem~\ref{thm:functional_calculus}. This unbounded functional calculus is available for any injective and bisectorial operator satisfying resolvent estimates as in~\eqref{eq:resolvent_estimates}. We emphasize, however, that not every such operator satisfies the estimate $\|h(BD)\| \lesssim \|h\|_\infty$, and that this typically requires $D$ to be a differential operator and $B$ to be a multiplication operator. Our operator $BD$ enjoys the following convergence lemma.
\begin{lemma}\label{lem:convergence_lemma}
    If $\{h_\alpha\}$ is a sequence in $H_\infty(S^0_\mu)$ such that $\{h_\alpha\}$ is uniformly bounded in $S^0_\mu$ and $h_\alpha \to h \in H_\infty(S^0_\mu)$ pointwise, then $h_\alpha(BD) \to h(BD)$ in the strong operator topology.
\end{lemma}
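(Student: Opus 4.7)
The strategy is to reduce the strong-operator convergence to a dense subspace of $\H$ on which $h_\alpha(BD)$ admits an explicit Cauchy integral representation, and then to conclude via vector-valued dominated convergence. By hypothesis, $M := \sup_\alpha \|h_\alpha\|_\infty < \infty$, so Theorem~\ref{thm:functional_calculus} supplies $\sup_\alpha \|h_\alpha(BD)\|_{\mathcal{L}(\H)} \le CM$. Together with the analogous bound on $h(BD)$, any strong-operator identity verified on a dense subset of $\H$ extends to all of $\H$ by a standard $3\varepsilon$ estimate.

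For the dense subspace, fix an auxiliary $\psi \in H_\infty(S_\mu^o)$ with decay $|\psi(\lambda)| \lesssim \min(|\lambda|^s, |\lambda|^{-s})$ for some $s > 0$, for instance (a bisectorial variant of) $\psi(\lambda) = \lambda^2/(1+\lambda^2)^2$. Set $\mathcal{D} = R(\psi(BD))$. For $x = \psi(BD)y \in \mathcal{D}$, the product $h_\alpha \psi$ inherits the required decay with constants independent of $\alpha$, and multiplicativity of the functional calculus on decaying symbols gives
\[
    h_\alpha(BD) x = (h_\alpha \psi)(BD) y = \frac{1}{2\pi i}\int_\Gamma \frac{h_\alpha(w)\psi(w)}{w - BD}\,y\,dw.
\]
Pointwise on $\Gamma$, the integrand converges to $h(w)\psi(w)(w-BD)^{-1}y$; by the resolvent estimate in Proposition~\ref{prop:bd_facts} and $|h_\alpha(w)\psi(w)| \le M|\psi(w)|$, it is dominated in $\H$-norm by $CM\|y\||\psi(w)|/\dist(w, S_\omega)$, which is integrable along $\Gamma$ thanks to the decay of $\psi$. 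Vector-valued dominated convergence then yields $h_\alpha(BD)x \to (h\psi)(BD)y = h(BD)x$.

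The principal obstacle is the density of $\mathcal{D} = R(\psi(BD))$ in $\H$, which is not an immediate corollary of Theorem~\ref{thm:functional_calculus} because the very construction of $h(BD)$ for general $h \in H_\infty(S_\mu^o)$ relies on the present convergence lemma. I would derive density directly from Theorem~\ref{thm:abstract_sfe}: the square function estimates for $\psi$ imply a Calderón-type reproducing formula $\int_0^\infty (\psi^2)(tBD)\,dt/t = c\,I_\H$ in the strong sense on $\H = R(BD)$, from which any $z \in \H$ orthogonal to $R(\psi(BD))$ must have vanishing square function, hence $z = 0$. An alternative is the classical McIntosh route: exhibit a concrete approximate identity $\phi_n(BD) \to I_\H$ built from decaying $\phi_n \to 1$, relying only on the Cauchy integral construction and the resolvent bounds already in hand.
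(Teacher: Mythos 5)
The paper never writes out a proof of this lemma: it is deferred to the cited references (Albrecht--Duong--McIntosh, and Auscher--Axelsson, Sec.~6.1), and the argument given there is exactly the one you outline --- uniform bounds from the $H_\infty$ estimate, norm/strong convergence of $(h_\alpha\psi)(BD)$ via dominated convergence in the regularizing Cauchy integral, density of $R(\psi(BD))$ in $\H$, and a $3\varepsilon$ argument --- so your proposal is essentially the standard proof the paper points to, and its main steps are sound. One caveat on the step you yourself flag as the principal obstacle: your route (a) is not correct as phrased, since $z\perp R(\psi(BD))$ only gives $\psi(BD)^{*}z=0$, which says nothing directly about the square function $\int_0^\infty\|\psi(tBD)z\|^2\,dt/t$; the clean repair is either to note that $\psi(BD)^{*}=\psi^{*}((BD)^{*})$ with $(BD)^{*}$ again bisectorial, so $z\in N\bigl(((BD)^{*})^2\bigr)=N((BD)^{*})=R(BD)^{\perp}$, forcing $z=0$ for $z\in\H$, or to replace $R(\psi(BD))$ by the span of $\bigcup_{t>0}R(\psi(tBD))$, whose density does follow from the Calder\'on reproducing formula, with the dominated-convergence argument applied on each $R(\psi(tBD))$ separately. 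Your route (b) (a concrete approximate identity built from resolvents, proved strongly convergent on $\H$ directly from the resolvent bounds, not from the lemma being proved) is the standard and unproblematic alternative; with either repair the proof is complete and matches the cited one.
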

See Albrecht, Duong, and McIntosh \cite[Sec.~D~and~Thm.~D]{ADM_1996} for a general exposition of holomorphic functional calculus for sectorial operators (which is straightforward to generalize to bisectorial operators) and a proof of Lemma~\ref{lem:convergence_lemma}. See also Auscher and Axelsson~\cite[Sec.~6.1]{AuscherRosén_2011} for short proofs of these results.

We use Theorem~\ref{thm:functional_calculus} to define the spaces and operators that we need. Let $\chi^+(\lambda)$ and $\chi^-(\lambda)$ be the indicator functions for $\Re \lambda > 0$ and $\Re \lambda < 0$, respectively. Then $\chi^\pm  \in H_\infty(S^0_\mu)$, so we can define the spectral projections
\begin{equation*}
    E^\pm = \chi^\pm(BD).
\end{equation*}
Since $h \mapsto h(BD)$ is a homomorphism, it is immediate from the properties of $\chi^\pm$ that $E^\pm$ are complementary projections, giving rise to the direct topological sum $\H = E^+\H \oplus E^-\H$. By Proposition~\ref{prop:bd_facts}, $L_2(\T)$ then splits topologically as
\begin{align}\label{eq:L2(T)_splitting}
    L_2(\T) = L_2^{+,b}(\T) \oplus L_2^{-,b}(\T),
\end{align}
where $L_2^{+,b}(\T) := E^+\H \oplus N(BD)$ and $L_2^{-,b}(\T) := E^-\H$. 
We will see that~\eqref{eq:L2(T)_splitting}, after a change of variables, corresponds precisely to the Hardy space splitting of Theorem~\ref{thm:hardy_splitting}. 

Henceforth, we extend any operator $h(BD)$ with $h \in H_\infty(S^o_\mu)$ to $L_2(\T)$ by implicitly precomposing with the projection $f \mapsto f_\H$ corresponding to the splitting $\H \oplus N(BD)$. Let $E_0$ be the projection $f \mapsto f_{N(BD)}$, and define
\begin{alignat*}{3}
    &C_t^+ && = \big(e^{-t\lambda}\chi^+(\lambda)\big)(BD) + E_0, \quad  && t > 0, \\
    &C_t^- && = \big(e^{-t\lambda}\chi^-(\lambda)\big)(BD) ,&&  t < 0.
\end{alignat*}
Given a function $f \in L_2(\T)$, we let
\begin{align*}
    f_t^+ = C_t^+ f \quad (t > 0) \quad \text{and} \quad f_t^- = C_t^- f\quad (t < 0 ).
\end{align*}
We can view $f_t^\pm$ either as extensions of $f$ to $\R_{\pm} \times \T$, where $\R_+ = (0,\infty)$ and $\R_- = (-\infty,0)$, or simply as functions in $L_2(\T)$ for $t$ fixed. As we show below, the relevance of $f_t^\pm$ is that they correspond to the Cauchy integrals of $f \in L_2(\gamma)$, after changing variables. 

By using the functional calculus for $BD$, we now derive some facts about $f_t^\pm$. Let $h_t(\lambda) = e^{-t\lambda}\chi^+(\lambda)$, so that $f_t^+ = h_t(BD)f + E_0f$. Clearly $|h_t| \leq 1$, $h_t \to \chi^+$ as $t \to 0^+$, and $h_t \to 0$ as $t \to \infty$. Thus, by Lemma~\ref{lem:convergence_lemma},
\begin{align*}
    f_t^+ \xrightarrow[t \to 0^+]{} E^+f + E_0f = f^+ \quad \text{and} \quad f_t^+ \xrightarrow[t \to \infty]{} E_0f
\end{align*}
in the topology of $L_2(\T)$. Similarly, $f_t^- \to f^-$ as $t \to 0^-$, and $f_t^- \to 0$ as $t \to -\infty$. By considering the symbols
\begin{align*}
     \frac{e^{-\lambda(t + \alpha) } - e^{-\lambda t}}{\alpha}\chi^\pm(\lambda),
\end{align*}
for $t \in \R_\pm$ fixed, we also find from Lemma~\ref{lem:convergence_lemma} and the fact that $(\lambda)(BD) = BD$, that
\begin{align*}\label{eq:t_derivative_of_f_t}
    \frac{f^\pm_{t + \alpha} - f^\pm_t}{\alpha}  \xrightarrow[\alpha \to 0]{L_2(\T)}  \big(-\lambda e^{-t\lambda}\chi^\pm(\lambda) \big)(BD)  f = - BDf_t^\pm.
\end{align*}
From this last observation it follows that $f^\pm(t,\theta) = f^\pm_t(\theta)$ satisfies the $(t,\theta)$-version~\eqref{eq:t_theta_cauchy_riemann} of the Cauchy-Riemann equations, in $\R_\pm \times \T$. For example, if $\varphi \in C_0^\infty(\R_+ \times \T)$, then
\begin{equation}\label{eq:integration_by_parts_for_f_t}
\begin{aligned}
    \int_0^\infty \int_0^{2\pi} f^+_t(\theta) \partial_t\varphi(t,\theta)\,d\theta dt & = \lim_{\alpha \to 0} \int_0^\infty \int_0^{2\pi} f^+_t(\theta) \frac{\varphi(t+\alpha,\theta) - \varphi(t,\theta)}{\alpha}\,d\theta dt \\
    & = \lim_{\alpha \to 0} -\int_0^\infty \int_0^{2\pi} \frac{f^+_{t-\alpha}(\theta) - f^+_t(\theta)}{-\alpha} \varphi(t,\theta)\,d\theta dt \\
    & = \int_0^\infty \int_0^{2\pi} BD f^+_t(\theta) \varphi(t,\theta)\,d\theta dt;
\end{aligned}
\end{equation}
hence $\partial_t f^+_t = - BD f^+_t$. Let us summarize our findings in a lemma.
\begin{lemma}\label{lem:properties_of_f_t}
    If $f \in L_2(\T)$ and we decompose $f = f^+ + f^-$ according~\eqref{eq:L2(T)_splitting}, then
    \begin{align*}
        f_t^\pm \xrightarrow[t \to 0^\pm]{} f^\pm, \quad f_t^+ \xrightarrow[t \to \infty]{} E_0f, \quad \text{and} \quad f_t^- \xrightarrow[t\to -\infty]{} 0,  
    \end{align*}
    in the topology of $L_2(\T)$. Moreover, in the sense of distributions, $(\partial_t + BD)f_t^\pm = 0$ in $\R_\pm \times \T$.
\end{lemma}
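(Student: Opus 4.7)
The proof will essentially just organize the computations already sketched in the paragraphs leading up to the lemma, using the bounded $H_\infty(S_\mu^o)$ functional calculus of Theorem~\ref{thm:functional_calculus} together with Lemma~\ref{lem:convergence_lemma} as the sole analytic inputs. The plan is to handle the four convergence statements and the distributional PDE in two separate blocks, since the first is entirely a matter of identifying the right family of bounded symbols, while the second is a short integration-by-parts in the $t$-variable.

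For the convergence statements, the key observation is that the four symbols
\[
h_t^+(\lambda) = e^{-t\lambda}\chi^+(\lambda), \quad t \ge 0, \qquad h_t^-(\lambda) = e^{-t\lambda}\chi^-(\lambda), \quad t \le 0,
\]
all lie in $H_\infty(S_\mu^o)$ with $\|h_t^\pm\|_\infty \le 1$ uniformly, since $\Re\lambda \ge 0$ on $\operatorname{supp}\chi^+ \cap S_\mu^o$ and $\Re\lambda \le 0$ on $\operatorname{supp}\chi^- \cap S_\mu^o$. Pointwise on $S_\mu^o$, $h_t^+ \to \chi^+$ as $t \to 0^+$ and $h_t^+ \to 0$ as $t \to \infty$, with the analogous behavior for $h_t^-$. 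Lemma~\ref{lem:convergence_lemma} then gives strong operator convergence $h_t^\pm(BD) \to \chi^\pm(BD) = E^\pm$ as $t \to 0^\pm$ and $h_t^\pm(BD) \to 0$ as $t \to \pm\infty$. Since $f^+ = E^+ f + E_0 f$ and $f^- = E^- f$ under the splitting~\eqref{eq:L2(T)_splitting}, evaluating $f_t^\pm$ from its definition and adding back $E_0 f$ only in the $+$ case yields each of the four claimed limits in $L_2(\T)$.

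For the distributional identity, I will imitate the calculation already displayed in~\eqref{eq:integration_by_parts_for_f_t}. Fix $\varphi \in C_0^\infty(\R_\pm \times \T)$ and compute $\int\!\!\int f_t^\pm \partial_t \varphi \, d\theta\, dt$ by swapping the order of the (compactly supported) $t$-integration and the difference-quotient limit. The essential input is that, for each fixed $t \in \R_\pm$, the $L_2(\T)$-valued difference quotient $\alpha^{-1}(f_{t+\alpha}^\pm - f_t^\pm)$ converges to $-BDf_t^\pm$, a fact obtained by applying Lemma~\ref{lem:convergence_lemma} to the uniformly bounded symbols $\alpha^{-1}(e^{-(t+\alpha)\lambda}-e^{-t\lambda})\chi^\pm(\lambda)$, which converge pointwise to $-\lambda e^{-t\lambda}\chi^\pm(\lambda)$. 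Translation invariance in $t$ then gives
\[
\int_{\R_\pm}\!\!\int_\T f_t^\pm \partial_t\varphi \, d\theta\, dt = \int_{\R_\pm}\!\!\int_\T (BDf_t^\pm) \varphi \, d\theta\, dt,
\]
and since $D = -i\partial_\theta$ can be moved onto $\varphi$ by a further integration by parts in $\theta$, this says exactly $(\partial_t + BD)f_t^\pm = 0$ in $\mathcal{D}'(\R_\pm \times \T)$.

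The only real subtlety is justifying the interchange of the limit in $\alpha$ with the $t$-integration in the first line of~\eqref{eq:integration_by_parts_for_f_t}. Since $\varphi$ is compactly supported in $\R_\pm \times \T$, one can restrict $t$ to a compact set bounded away from $0$ (and from $\pm\infty$), on which the semigroup-like estimates from the bounded functional calculus yield a uniform $L_2(\T)$-bound on $\alpha^{-1}(f_{t+\alpha}^\pm - f_t^\pm)$, so dominated convergence applies. This is the only step requiring a little care; everything else is formal manipulation of the functional calculus.
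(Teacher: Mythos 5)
Your argument is correct and coincides with the paper's own treatment: the four limits follow from Lemma~\ref{lem:convergence_lemma} applied to the uniformly bounded symbols $e^{-t\lambda}\chi^\pm(\lambda)$, and the distributional equation is exactly the difference-quotient and shift-of-variable computation of~\eqref{eq:integration_by_parts_for_f_t}, with your dominated-convergence justification filling in a step the paper leaves implicit. One small caveat: the closing remark about a further integration by parts in $\theta$ to move $D$ onto $\varphi$ is unnecessary (the paper reads $BDf_t^\pm$ as an $L_2(\T)$-valued function of $t$, so the identity $\int_{\R_\pm}\int_\T f_t^\pm\,\partial_t\varphi\,d\theta\,dt = \int_{\R_\pm}\int_\T (BDf_t^\pm)\varphi\,d\theta\,dt$ already is the asserted statement), and taken literally it would require differentiating the coefficient $b$, which is only bounded measurable.
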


Let $R_\gamma : L_2(\gamma) \to L_2(\T)$ be the parametrization operator $R_\gamma f(\theta) = f(r_\gamma(\theta)e^{i\theta})$. For ${f\in L_2(\gamma)}$, we define $F^\pm$ in $\Omega_0^+ :=\Omega^+\setminus\{0\}$ and $\Omega^-$, respectively, by
\begin{align*}\label{eq:z_version_of_f_t}
    F^\pm(r_\gamma(\theta)e^{-t}e^{i\theta}) = C_t^\pm R_\gamma f(\theta).
\end{align*}
\begin{lemma}\label{lem:analyticity_of_f_t}
    The functions $F^\pm$ are analytic in $\Omega^+_0$ and $\Omega^-$, respectively, and $F^+$ has an analytic extension to $\Omega^+$.
\end{lemma}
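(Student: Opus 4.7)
The plan is to push the $L_2(\T)$-valued ODE $\partial_t f^\pm_t + BD f^\pm_t = 0$, already at hand from Lemma~\ref{lem:properties_of_f_t}, forward under the bi-Lipschitz change of variables $\Phi(t,\theta) = r_\gamma(\theta) e^{-t+i\theta}$ into the Cauchy--Riemann equation $\bar\partial F^\pm = 0$ on the image of $\Phi$, using the inverse chain rule derived at the start of Section~\ref{sec:sfe}, and then invoke Weyl's lemma for analyticity. The extension of $F^+$ across $0$ will follow from a uniform sup-bound on $f^+_t$ for $t$ large, together with Riemann's removable singularity theorem.

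First I would establish joint regularity of $f^\pm$: for $t > 0$, the symbol $\lambda h_t(\lambda) = \lambda e^{-t\lambda}\chi^+(\lambda)$ is bounded on $S^o_\mu$ with $\|\lambda h_t\|_\infty \lesssim 1/t$, so by Theorem~\ref{thm:functional_calculus} the operator $BD h_t(BD) = (\lambda h_t)(BD)$ is bounded. Hence $f^+_t \in D(BD) = H^1(\T)$ with $\|f^+_t\|_{H^1(\T)} \lesssim (1 + 1/t)\|f\|$, and analogously for $f^-_t$ at $t < 0$. Combined with $\partial_t f^\pm_t = -BD f^\pm_t$ in $L_2(\T)$ from Lemma~\ref{lem:properties_of_f_t}, this places $f^\pm$ in $H^1_{\textup{loc}}(\R_\pm\times\T)$, solving $(\partial_t + BD) f^\pm = 0$ almost everywhere.

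Since $r_\gamma$ is Lipschitz and strictly positive, $\Phi$ is a bi-Lipschitz diffeomorphism $\R_+\times\T \to \Omega^+_0$ and $\R_-\times\T \to \Omega^-$, so $F^\pm$ inherits $H^1_{\textup{loc}}$ regularity on the image. From the inversion of the coordinate change in Section~\ref{sec:sfe}, one computes $\partial_x + i\partial_y = C(t,\theta)(\partial_t + BD)$ pointwise a.e., for a non-vanishing factor $C(t,\theta)$ (explicitly $-iA(t,\theta)(r_\gamma e^{i\theta})'$ with $A = 1/\det(M) = -e^{2t}/r_\gamma^2$); hence the $(t,\theta)$-PDE transfers to $\bar\partial F^\pm = 0$ a.e.\ on the image, which by the $H^1_{\textup{loc}}$ regularity holds distributionally as well. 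Weyl's lemma for the Cauchy--Riemann operator then gives that $F^+$ is analytic on $\Omega^+_0$ and $F^-$ is analytic on $\Omega^-$. For the removable singularity at $0$, I would exploit that $\|f^+_t\|_{H^1(\T)} \lesssim \|f\|$ uniformly for $t \geq t_0 > 0$; the one-dimensional Sobolev embedding $H^1(\T) \hookrightarrow C(\T)$ then makes $(t,\theta)\mapsto f^+_t(\theta)$ uniformly bounded on $[t_0,\infty) \times \T$, so via $\Phi$ the function $F^+$ is bounded on the punctured neighborhood $\{0 < |z| < \min_\theta r_\gamma(\theta)e^{-t_0}\}$ of $0$, and Riemann's removable singularity theorem delivers the analytic extension to $\Omega^+$.

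The step I expect to demand the most care is the chain-rule transfer of the $(t,\theta)$-PDE to $\bar\partial F^\pm = 0$ in the present low-regularity setting, since $\Phi$ is merely Lipschitz (as $r_\gamma$ is only Lipschitz) and the coefficient $b$ of $BD$ is only $L^\infty$. However, the standard facts that bi-Lipschitz maps preserve $H^1_{\textup{loc}}$, and that the chain rule is valid almost everywhere for $H^1$ functions under such changes of variables, provide the right framework to justify each step rigorously.
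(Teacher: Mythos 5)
Your argument is correct, but it follows a genuinely different route than the paper's proof. The paper never establishes joint Sobolev regularity of $(t,\theta)\mapsto f_t^\pm$: it works entirely in the weak formulation, testing $z^{-1}F^+$ against $\overline{\partial}_z\varphi$, changing variables in the integral, and integrating by parts in $t$ via the difference-quotient identity~\eqref{eq:integration_by_parts_for_f_t} applied to the test function $b^{-1}\varphi$ (which is why the factor $z^{-1}$ is inserted); elliptic regularity then yields analyticity. You instead upgrade regularity first, using Theorem~\ref{thm:functional_calculus} with the symbol $\lambda e^{-t\lambda}\chi^+(\lambda)$ (whose sup-norm on $S_\mu^o$ is $\lesssim 1/t$) to place $f_t^\pm$ in $D(BD)=H^1(\T)$ with locally uniform bounds and strong $t$-differentiability from Lemma~\ref{lem:properties_of_f_t}, hence $f^\pm\in H^1_{\textup{loc}}$ jointly; the a.e.\ chain rule for Sobolev functions under locally bi-Lipschitz changes of variables then converts $(\partial_t+BD)f^\pm=0$ into $\overline{\partial}_zF^\pm=0$ weakly, and Weyl's lemma applies. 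Your route buys a pointwise formulation at the price of quoting the Sobolev chain-rule theorem; the paper's route needs only $L_2(\T)$-valued information but requires the slightly fussier choice of test functions. The removability arguments also differ: the paper shows $F^+\in L_2(\Omega^+)$ and kills the boundary term with the cutoffs $\eta_\varepsilon$ in~\eqref{eq:analytically_through_the_origin_integration_by_part_trick}, whereas you obtain a uniform $L_\infty$ bound near $0$ from the uniform $H^1(\T)$ bound for $t\ge t_0$ together with $H^1(\T)\hookrightarrow C(\T)$, and then invoke Riemann's theorem; both are valid. Two small precision points: the map $(t,\theta)\mapsto r_\gamma(\theta)e^{-t}e^{i\theta}$ is only \emph{locally} bi-Lipschitz (its Jacobian degenerates like $e^{-t}$ as $t\to\infty$, and it is not a diffeomorphism since $r_\gamma$ is merely Lipschitz), but locality is all your chain-rule and Weyl argument needs; and your explicit factor $C(t,\theta)$ differs from the paper's computation by a power of $e^t$, which is immaterial since only its a.e.\ non-vanishing is used.
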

\begin{proof}
        We start by showing that $\overline{\partial}_z F^+ = 0$ in $\Omega^+_0$, where we set $\overline{\partial}_z = \partial_x + i \partial_y$. To simplify the calculations slightly, we show the equivalent statement that $\overline{\partial}_z (z^{-1} F^+) = 0$ in $\Omega^+_0$. By elliptic regularity, it is sufficient to show that this equation holds in the sense of distributions. Thus, let $\varphi \in C_0^\infty(\Omega^+_0)$. We want to show that 
    \begin{align*}
        \int_{\Omega^+}z^{-1} F^+(z)\overline{\partial}_z\varphi(z)\,dA(z) = 0.
    \end{align*}    
    Let $g = R_\gamma f$, so that $F^\pm(r_\gamma(\theta)e^{-t}e^{i\theta}) = g_t^+(\theta)$. We will make the Lipschitz change of variables $z = r_\gamma(\theta)e^{-t}e^{i\theta}$, and it follows from the chain rule that 
    \begin{align*}
        \overline{\partial}_z\varphi(z) = \frac{ie^{-t}}{(r_\gamma(\theta)e^{-t})^2}\Big[\big(r_\gamma(\theta)e^{i\theta}\big)'\partial_t + r_\gamma(\theta)e^{i\theta} \partial_\theta \Big].
    \end{align*}
    Recalling that $b(\theta) = i \frac{r_\gamma(\theta)e^{i\theta}}{(r_\gamma(\theta)e^{i\theta})'}$, and using $dA = (r_\gamma(\theta)e^{-t})^2\,d\theta dt$, we get that
    \begin{equation}\label{eq:change_of_variables}
    \begin{aligned}
        \int_{\Omega^+}z^{-1} F^+(z)\overline{\partial}_z\varphi(z)\,dA(z) =  - \int_0^\infty \int_0^{2\pi} g_t^+(\theta) \big[b(\theta)^{-1}\partial_t + D\big]\varphi(t,\theta) \,d\theta dt.
    \end{aligned}
    \end{equation}    
    Using~\eqref{eq:integration_by_parts_for_f_t}, which is seen to hold also with $\varphi \in C_0^\infty(\R_+ \times \T)$ replaced by $b(\theta)^{-1}\varphi(t,\theta)$ where $b^{-1} \in L_\infty(\T)$ and $\varphi \in \text{Lip}_0(\R_+ \times \T)$, we get
    \begin{multline*}
        \int_0^\infty \int_0^{2\pi} g_t^+(\theta) b(\theta)^{-1}\partial_t\varphi(t,\theta) \,d\theta dt  = \int_0^\infty \int_0^{2\pi} \big(BDg_t^+(\theta)\big) b(\theta)^{-1}\varphi(t,\theta) \,d\theta dt \\
         = \int_0^\infty \int_0^{2\pi} D g_t^+(\theta) \varphi(t,\theta) \,d\theta dt 
         = -\int_0^\infty \int_0^{2\pi} g_t^+(\theta)D\varphi(t,\theta) \,d\theta dt,
    \end{multline*}
    where the last step follows from integration by parts. Thus, by~\eqref{eq:change_of_variables}, we have that $\overline{\partial}_z (z^{-1} F^+) = 0$ in $\Omega^+_0$. The proof that $F^-$ is analytic in $\Omega^-$ is analogous. 

    To see that $F^+$ can be analytically extended through the origin, let $\varphi \in C_0^\infty(\Omega^+)$, and let $\eta$ be a smooth function in $\C$ such that $\eta \equiv 0$ on $\overline{B(0,1/2)}$, $\eta \equiv 1$ on $\C \setminus B(0,1)$, and $0 \leq \eta \leq 1$. Put $\eta_\varepsilon(z) = \eta(z / \varepsilon)$. By the product rule and the analyticity of $F^+$ in $\Omega^+_0$, we have
    \begin{align}\label{eq:analytically_through_the_origin_integration_by_part_trick}
        \int_{\Omega^+} \eta_\varepsilon(z) F^+(z) \overline{\partial}_z \varphi(z)\,dA(z) = - \int_{\Omega^+}F^+(z)\varphi(z) \overline{\partial}_z\eta_\varepsilon(z) \,dA(z).
    \end{align}   
    Theorem~\ref{thm:functional_calculus} combined with $\sup_{(t,\lambda)\in (0,\infty)\times\C}|e^{-t\lambda}\chi^+(\lambda)| = 1$ implies that $\|g_t^+\|_{L_2(\T)} \lesssim \|g\|_{L_2(\T)}$. The same change of variables as before therefore gives    
    \begin{align*}
        \|F^+\|^2_{L_2(\Omega^+)} \lesssim \int_0^\infty \|g_t^+\|^2_{L_2(\T)} e^{-2t}\,dt \lesssim \|g\|_{L_2(\T)} \int_0^\infty e^{-2t}\,dt < \infty.
    \end{align*}
    Thus, $F^+ \in L_2(\Omega^+)$, so the left-hand side of~\eqref{eq:analytically_through_the_origin_integration_by_part_trick} tends to $\int_{\Omega^+} F^+(z)\overline{\partial}_z\varphi(z)dA$ as $\varepsilon \to 0$. Let $r_0 = \min_{\theta \in [0,2\pi]} r_\gamma(\theta)$ and $\Omega^+_\varepsilon = \{r_\gamma(\theta)e^{-t}e^{i\theta} : t \geq \ln(r_0/\varepsilon),\, \theta \in [0,2\pi]\} $. Using $\|\overline{\partial}_z\eta_{\varepsilon}\|_{\infty} \lesssim 1/ \varepsilon$, $\text{supp}\, \overline{\partial}_z\eta_\varepsilon \subset \Omega^+_\varepsilon$, and $\|g_t^+\|_{L_1(\T)} \lesssim \|g\|_{L_2(\T)}$, we estimate the right-hand side of~\eqref{eq:analytically_through_the_origin_integration_by_part_trick} as
        \begin{multline*}
        \bigg|\int_{\Omega^+} F^+(z)\varphi(z) \overline{\partial}_z\eta_\varepsilon(z) \,dA(z)\bigg|  \lesssim \frac{1}{\varepsilon}\int_{\Omega^+_\varepsilon} |F^+(z)|\,dA(z)  \\
          \lesssim \frac{1}{\varepsilon}\int_{\ln(r_0/\varepsilon)}^\infty  \|g_t^+\|_{L_1(\T)}e^{-2t}\,dt 
         \\
         \lesssim \frac{\|g\|_{L_2(\T)}}{\varepsilon}\int_{\ln(r_0/\varepsilon)}^\infty  e^{-2t}\,dt 
         = \frac{\|g\|_{L_2(\T)}}{2r_0^2}\varepsilon \xrightarrow[\varepsilon \to 0]{} 0.
    \end{multline*}
    Therefore, $\int_{\Omega^+}F^+(z)\overline{\partial}_z\varphi(z)\,dA(z) = 0$, showing that $F^+$ has a removable singularity at $z =~0$. 
\end{proof}

We are now in a position to prove Theorem~\ref{thm:hardy_splitting}. For $t \in \R_\pm$, define $F_t^\pm(w) = F^\pm (e^{-t}w)$ for $w \in \gamma$. Recall that every operator $h(BD)$ with $h \in H_\infty(S_\mu^o)$ is extended to act on $L_2(\T)$ by precomposing with the projection $f \mapsto f_\mathcal{H}$ corresponding the splitting $L_2(\T) = \mathcal{H} \oplus N(BD)$.
\begin{proof}[Proof of Theorem~\ref{thm:hardy_splitting}] The bounded complementary projections $E^+ + E_0$ and $E^-$ corresponding to the splitting~\eqref{eq:L2(T)_splitting}, induce, through similarity, bounded complementary projections $P^\pm$ on $L_2(\gamma)$ defined by
\begin{align*}
    P^+ = R_\gamma^{-1}(E^+ + E_0)R_\gamma \quad \text{and} \quad P^- = R_\gamma^{-1}E^-R_\gamma.
\end{align*}
Given $f \in L_2(\gamma)$, we define $f^\pm = P^\pm f$. Observe that $F^\pm_t = R_\gamma^{-1} C_t^\pm R_\gamma f$, so that due to Lemma~\ref{lem:properties_of_f_t}, we have the following convergences in $L_2(\gamma)$:
 \begin{align}\label{eq:convergence_of_F_t^+}
     F_t^+ \xrightarrow[t \to 0 ^+]{} R_\gamma^{-1}(E^+ + E_0)R_\gamma f = f^+ \quad \text{and} \quad F_t^- \xrightarrow[t \to 0^-]{} R_\gamma^{-1}E^- R_\gamma f = f^-.
 \end{align}
Thus, to show Theorem~\ref{thm:hardy_splitting}, it is enough to prove that $\pm F_t = F^\pm_t$ for all $f \in L_2(\gamma)$ and $t \in \R_\pm$. To this end, define 
\begin{gather*}
     V^\pm  = \big\{f_\varepsilon^\pm \in L_2(\T) \:; f \in L_2(\T),\,  \pm\varepsilon > 0 \big\} \subset L_2^{\pm,b}(\T), 
     \\ V = V^+ \oplus V^-, \quad W = R_\gamma^{-1}V.
\end{gather*}
By Lemma~\ref{lem:properties_of_f_t}, $V^\pm$ is dense in $L_2^{\pm,b}(\T)$. Thus, due to~\eqref{eq:L2(T)_splitting}, $V$ is dense in $L_2(\T)$; hence $W$ is dense in $L_2(\gamma)$.

Let $f \in W$, so that $R_\gamma f \in V$. It follows from the functional calculus of $BD$ that $C_t^+R_\gamma f = C_{t+\varepsilon}^+ g$ for some $g \in L_2(\T)$ and $\varepsilon > 0$. Thus, by Lemma~\ref{lem:analyticity_of_f_t}, $F^+$ is analytic in $\{r_\gamma(\theta)e^{-t}e^{i\theta} : t >-\varepsilon,\, \theta \in \T\}$, a neighborhood of $\overline{\Omega^+}$. In particular, $ F^+|_\gamma$ is well-defined, and by~\eqref{eq:convergence_of_F_t^+}, $F^+|_\gamma = \lim_{t \to 0^+}F^+_t = f^+$. Similarly, $F^-$ is analytic in a neighborhood of $\overline{\Omega^-}$, and $ F^-|_\gamma = f^-$. Lemma~\ref{lem:properties_of_f_t} also implies $\lim_{t \to -\infty} F_t^- = 0$. By Cauchy's integral theorem, we then have
    \begin{align*}
        F^+(z) = \frac{1}{2\pi i} \int_\gamma \frac{f^+(z)}{w-z}\,dw = \frac{1}{2\pi i} \int_\gamma \frac{f^+(z) + f^-(z)}{w-z}\,dw = F(z), \quad z \in \Omega^+.
    \end{align*}
Similarly, $ F^-(z) = - F(z)$ for $z \in \Omega^-$ (where the minus sign is due to the reversed orientation of $\gamma$ when considered as the boundary of $\Omega^-$). In particular, for fixed $t \in \R_\pm$, the $L_2$ bounded operators $f \mapsto F_t^\pm$ and $f \mapsto \pm F_t$ agree on the dense subset $W$, and are therefore equal everywhere. We conclude that for each $f \in L_2(\gamma)$, we have $\lim_{t \to 0^\pm}\pm F_t = f^\pm$ strongly in $L_2(\gamma)$.
\end{proof}

\section{Square function estimates}
\label{appendix:sfe}
In this appendix, we prove Theorem~\ref{thm:square_function_estimates}. As before, $\mu$ is a fixed angle in $(\omega, \pi/2)$.
\begin{theorem}\label{thm:abstract_sfe}
Fix $h \in H_\infty(S_\mu^o)$ such that $h \not \equiv 0$ on $S_{\mu}^o \cap \{\lambda : \textup{Re}\, \lambda > 0\}$ and $h \not \equiv 0$ on $S_{\mu}^o \cap \{\lambda : \textup{Re}\, \lambda < 0\}$, and suppose that $h$ satisfies the decay condition
\[|h(\lambda)| \lesssim |\lambda|^\alpha(1+|\lambda|)^{-2\alpha}\]
for some $\alpha > 0$. 
Then
\[\|f\|_{L_2(\T)}^2 \simeq \int_0^\infty \|h(t BD)f\|_{L_2(\T)}^2 \frac{dt}{t}\]
for all $f \in \H$.
\end{theorem}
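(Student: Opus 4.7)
My plan is to establish the two inequalities
\[\int_0^\infty \|h(tBD)f\|^2 \frac{dt}{t} \lesssim \|f\|^2 \qquad \text{and} \qquad \|f\|^2 \lesssim \int_0^\infty \|h(tBD)f\|^2 \frac{dt}{t}\]
separately, handling the first by an abstract Schur/almost-orthogonality argument that reduces to a single canonical quadratic estimate for $BD$, and deriving the second from the first via a Calder\'on reproducing formula. Since $h$ satisfies the decay $|h(\lambda)| \lesssim |\lambda|^\alpha (1+|\lambda|)^{-2\alpha}$, each operator $h(tBD)$ is defined unambiguously by the Dunford integral over the contour $\Gamma$ introduced before Lemma~\ref{lem:convergence_lemma}, and is bounded on $\mathcal{H}$ uniformly in $t>0$ by the resolvent estimates of Proposition~\ref{prop:bd_facts}.

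For the upper bound I would fix a reference symbol $\psi_0(\lambda) = \lambda(1+\lambda^2)^{-1}$ and first establish the single quadratic estimate
\[\int_0^\infty \|\psi_0(sBD)g\|^2 \,\frac{ds}{s} \lesssim \|g\|^2.\]
When $B = I$ this is trivial via Parseval on $\ell_2(\mathbb{Z})$, using that $\int_0^\infty |\psi_0(sk)|^2 \,ds/s$ is a finite constant independent of $k \neq 0$. For general accretive $B$ this estimate is a repackaging of the $L_2$-boundedness of the Cauchy integral on the Lipschitz curve $\gamma$ after the change of variables from Section~\ref{sec:sfe}, and I would take the Coifman--McIntosh--Meyer theorem as the essential analytic input rather than reproving it. Having the single-symbol estimate, the passage to arbitrary $h$ is achieved by the abstract Schur test of Ros\'en--Keith--McIntosh~\cite{AKM_2006}: one verifies the off-diagonal estimate $\|h(tBD)\psi_0(sBD)\|_{\mathcal{H}\to\mathcal{H}} \lesssim \min(t/s, s/t)^{\alpha'}$ for some $\alpha' > 0$, which follows by deforming the product of two Dunford contours and exploiting the decay of both symbols together with the resolvent bounds. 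The same argument applies verbatim to the adjoint $(BD)^* = DB^*$, which is of the same structural form, yielding an analogous upper bound.

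For the lower bound I would use the nondegeneracy hypothesis on $h$ to construct $\widetilde h \in H_\infty(S_\mu^o)$ sharing the decay of $h$ and satisfying the Calder\'on reproducing identity
\[\int_0^\infty h(t\lambda)\,\widetilde h(t\lambda)\,\frac{dt}{t} = 1, \qquad \lambda \in S_\mu^o \setminus \{0\},\]
built componentwise on the two halves of the bisector by inverting the scalar integral. Interchanging this scalar integral with the Dunford representation, and using injectivity of $BD$ on $\mathcal{H}$ together with Lemma~\ref{lem:convergence_lemma} applied to a suitable sequence of compactly truncated symbols, lifts the scalar identity to the operator identity
\[f = \int_0^\infty \widetilde h(tBD)\, h(tBD)f\,\frac{dt}{t}, \qquad f \in \mathcal{H},\]
with the integral converging in $\mathcal{H}$. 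Pairing with $f$ and applying Cauchy--Schwarz in $t$ gives
\[\|f\|^2 \leq \Bigl(\int_0^\infty \|h(tBD)f\|^2\,\frac{dt}{t}\Bigr)^{1/2}\Bigl(\int_0^\infty \|\widetilde h(tBD)^* f\|^2\,\frac{dt}{t}\Bigr)^{1/2},\]
and the second factor is $\lesssim \|f\|$ by the upper bound already proved for $(BD)^*$ with symbol $\lambda \mapsto \overline{\widetilde h(\overline\lambda)}$. Dividing through by $\|f\|$ yields the lower square function estimate.

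The main obstacle is the quadratic estimate for the reference symbol $\psi_0$: this is where the Lipschitz geometry of $\gamma$ enters essentially through Coifman--McIntosh--Meyer, and every other step is abstract functional-calculus bookkeeping driven by the resolvent bounds of Proposition~\ref{prop:bd_facts}. A secondary technical point is to justify the interchange of integration and Dunford calculus in the Calder\'on identity for operators with only the decay afforded by $h$ and $\widetilde h$; this is handled by first truncating the $t$-integral to a compact subinterval and invoking Lemma~\ref{lem:convergence_lemma} in the limit.
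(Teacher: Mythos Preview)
Your sketch is correct and follows the standard McIntosh-school strategy: reduce to a single canonical quadratic estimate (whose proof is the Coifman--McIntosh--Meyer theorem in disguise), transfer to arbitrary decaying symbols via the Schur/off-diagonal argument of~\cite{AKM_2006}, and recover the lower bound from a Calder\'on reproducing identity together with the dual upper bound. This is exactly the content of the references the paper invokes.

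Note, however, that the paper does not give its own proof of Theorem~\ref{thm:abstract_sfe}: it simply records the non-periodic version as the Coifman--McIntosh--Meyer theorem (with the elementary proof of Coifman--Jones--Semmes), and cites~\cite[Thm.~7.1]{AuscherRosén_2012} as a precise reference for the periodic case, of which the present statement is the special case $n=1$, $\sigma=0$. So your proposal is not competing with an alternative argument in the paper; rather, you have supplied a correct outline of the proof that the paper outsources to the literature.
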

The non-periodic result $(\T \to \R)$ is closely related to the seminal work of Coifman, McIntosh, and Meyer \cite{CMM_1982}. 
An elementary proof of the (non-periodic) square function estimate is found in Coifman, Jones, and Semmes \cite[Sec.~1~and~2]{CoifmanJonesSemmes1989}. The periodic result~\ref{thm:abstract_sfe} is a straightforward tweak. A precise reference is~\cite[Thm.~7.1]{AuscherRosén_2012}, of which Theorem~\ref{thm:abstract_sfe} is the special case $n = 1$, $\sigma = 0$.
\begin{corollary}
    Let $k \geq 1$. For all $f\in R_\gamma^{-1}\H$ we have
    \label{cor:dirty_square_function_estimates}
    \begin{equation*}
        \|f\|_{L_2(\gamma)}^2 \simeq \int_{\C\setminus\gamma}|z^{-1}(z\partial_z)^kF(z)|^2\,\widetilde d(z)^{2k-1}dA(z),
    \end{equation*}
    where $\widetilde d(z)= \bigl|\log(r_\gamma(\Arg z)/|z|)\bigr|$. 
\end{corollary}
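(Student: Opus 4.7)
The plan is to deduce the corollary from Theorem~\ref{thm:abstract_sfe} applied to a single symbol that encodes both halves of the bisector, and then to translate the resulting $(t,\theta)$-integral back to the $z$-plane via the change of variables $z = r_\gamma(\theta)e^{-s}e^{i\theta}$ from Appendix~\ref{appendix:hardy_splitting}.

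For the symbol, I would take
\[h(\lambda) = \lambda^k e^{-\lambda}\chi^+(\lambda) + \lambda^k e^{\lambda}\chi^-(\lambda) \in H_\infty(S_\mu^o),\]
which is nonzero on each component of $S_\mu^o$ and satisfies $|h(\lambda)| \lesssim |\lambda|^k(1+|\lambda|)^{-2k}$, since the exponentials decay faster than any polynomial on the bisector. Setting $g = R_\gamma f$ for $f \in R_\gamma^{-1}\H$ and splitting $g = E^+ g + E^- g$, one finds
\[h(tBD)g = t^k (BD)^k C_t^+ g + t^k (BD)^k C_{-t}^- g,\]
whose summands lie in the complementary closed subspaces $E^+\H$ and $E^-\H$. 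Boundedness of $E^\pm$ then yields $\|u^+ + u^-\|^2 \simeq \|u^+\|^2 + \|u^-\|^2$ for any $u^\pm \in E^\pm \H$, so combining with Theorem~\ref{thm:abstract_sfe} gives
\[\|g\|_{L_2(\T)}^2 \simeq \int_0^\infty t^{2k}\bigl(\|(BD)^k C_t^+ g\|^2 + \|(BD)^k C_{-t}^- g\|^2\bigr)\frac{dt}{t}.\]

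To identify the right-hand side with the area integral in the statement, I would change variables by $z = r_\gamma(\theta)e^{-s}e^{i\theta}$, with $s = t > 0$ in the first summand (so $z \in \Omega^+$ and $\widetilde d(z) = t$) and $s = -t < 0$ in the second (so $z \in \Omega^-$ and $\widetilde d(z) = t$). A direct Jacobian calculation gives $dA(z) = |z|^2\,ds\,d\theta$. Along a ray $\{\theta = \mathrm{const}\}$ one has $\partial_s = -z\partial_z$ on holomorphic functions, so the relations $(BD)^k C_t^+ g = (-\partial_t)^k C_t^+ g$ and $(BD)^k C_{-t}^- g = \partial_t^k C_{-t}^- g$ from Appendix~\ref{appendix:hardy_splitting}, combined with the identifications $C_t^+ g(\theta) = F^+(z) = F(z)$ on $\Omega^+$ and $C_{-t}^- g(\theta) = F^-(z) = -F(z)$ on $\Omega^-$ from the proof of Theorem~\ref{thm:hardy_splitting}, convert each $(BD)^k$ into $\pm(z\partial_z)^k$ acting on $F$, and the signs disappear upon taking moduli. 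Using $t^{2k-1}\,dt = t^{2k}\,dt/t$ and absorbing the factor $|z|^2$ from the Jacobian, the two integrals reassemble into $\int_{\C\setminus\gamma}|z^{-1}(z\partial_z)^k F(z)|^2\,\widetilde d(z)^{2k-1}\,dA(z)$. Since $r_\gamma$ is bounded above and away from zero, $\|R_\gamma f\|_{L_2(\T)} \simeq \|f\|_{L_2(\gamma)}$, and the corollary follows.

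The main obstacle is the decoupling step: the summands of $h(tBD)g$ lie in complementary but non-orthogonal closed subspaces of $\H$, so the estimate $\|u^+ + u^-\|^2 \simeq \|u^+\|^2 + \|u^-\|^2$ is nontrivial and rests on the uniform boundedness of $E^\pm$. The remaining work—verifying the decay of $h$, tracking signs through the change of variables, and checking that $\widetilde d(z)^{2k-1}$ and the factor $|z|^{-2}$ emerge correctly from the Jacobian—is routine.
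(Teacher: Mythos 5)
Your proposal is correct and follows essentially the same route as the paper's proof: the same symbol $h(\lambda) = \lambda^k\bigl(e^{-\lambda}\chi^+(\lambda) + e^{\lambda}\chi^-(\lambda)\bigr)$ in Theorem~\ref{thm:abstract_sfe}, the relation $BDf_t^\pm = -\partial_t f_t^\pm$, the identification $F^\pm = \pm F$ from the proof of Theorem~\ref{thm:hardy_splitting}, and the change of variables $z = r_\gamma(\theta)e^{-t}e^{i\theta}$ with Jacobian $|z|^2\,dt\,d\theta$. The only detail you make explicit beyond the paper's terse argument is the decoupling $\|u^+ + u^-\|^2 \simeq \|u^+\|^2 + \|u^-\|^2$ via boundedness of $E^\pm$, which is implicit there and correctly handled in your write-up.
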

\begin{proof}
This follows by applying Theorem~\ref{thm:abstract_sfe} with the symbol
\begin{equation*}
    h(\lambda) = \lambda^k\big(e^{-\lambda}\chi^+(\lambda) + e^{\lambda}\chi^-(\lambda)\big),
\end{equation*}
using the fact that $BD f_t = -\partial_t f_t$, and then doing the change of variables $z = r_\gamma(\theta)e^{-t}e^{i\theta}$.
\end{proof}

We now show how Corollary~\ref{cor:dirty_square_function_estimates} implies Theorem~\ref{thm:square_function_estimates}.
Note that by the product rule, $(z\partial_z)^k = \sum_{j=1}^k a_{j,k}z^j\partial_z^j$, where $a_{j,k}$ are positive integers and $a_{k,k} = 1$. 
So for $z \in \Omega^+$, we have that
\begin{align*}
(z\partial_z)^kF(z) 
= \chi_\delta(z)z^kF^{(k)}(z) + z^k(1-\chi_\delta(z))F^{(k)}(z) + \sum_{j=1}^{k-1} a_{j,k}z^j F^{(j)}(z) ,
\end{align*}
where $\chi_\delta(z) = \chi_{\Omega_\delta}(z)$ and $\Omega_\delta = \Omega^+_\delta \cup \Omega^-_\delta$.\\
\begin{lemma}
    \label{lemma:compact_remainder_terms}
    Let $k \ge 1$ be an integer and $\delta >0$. 
    Further, let $F$ denote the Cauchy integral of $f \in L_2(\gamma)$.
    Then the operators
    \begin{align*}
        K_j : L_2(\gamma) &\longrightarrow L_2(\C \setminus \gamma, \widetilde d(z)^{2k-1}dA(z)) \\
        K_jf(z) &= a_{j,k}z^{j-1} F^{(j)}(z),
    \end{align*}
    $1 \le j \le k-1$, and
    \begin{align*}
        K_k : L_2(\gamma) &\longrightarrow L_2(\C \setminus \gamma, \widetilde d(z)^{2k-1}dA(z)) \\
        K_kf(z) &= z^{k-1}(1-\chi_\delta(z))F^{(k)}(z)
    \end{align*}
     are Hilbert-Schmidt.
\end{lemma}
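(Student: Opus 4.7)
The plan is to realize each $K_j$ explicitly as an integral operator with kernel $K_j(z,w)$ and then verify the Hilbert--Schmidt characterization
\[\|K_j\|_{\mathrm{HS}}^2 = \int_{\C\setminus\gamma}\int_\gamma |K_j(z,w)|^2\,|dw|\,\widetilde d(z)^{2k-1}\,dA(z) < \infty.\]
By the Cauchy integral formula for derivatives, $F^{(j)}(z)=\frac{j!}{2\pi i}\int_\gamma (w-z)^{-(j+1)}f(w)\,dw$ for $z\in\C\setminus\gamma$, so for $1\le j\le k-1$ the kernel is $K_j(z,w) = c_j \, z^{j-1}(w-z)^{-(j+1)}\tau(w)$, where $\tau(w)=dw/|dw|$ is the unit tangent and $c_j$ a constant; $K_k$ carries the additional cutoff factor $(1-\chi_\delta(z))$. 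In particular $|K_j(z,w)|\simeq |z|^{j-1}|w-z|^{-(j+1)}$.

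Next, I would split the $z$-integration into three regions: a boundary strip $A_1=\{0<d(z)<\delta/2\}$, an annular neighborhood of infinity $A_2=\{|z|>R\}$ for $R$ large, and the remaining compact set $A_3=\C\setminus(\gamma\cup A_1\cup A_2)$. On $A_1$, the Lipschitz regularity of $\gamma$ gives the standard estimate $\int_\gamma |w-z|^{-(2j+2)}\,|dw|\lesssim d(z)^{-(2j+1)}$; combined with $|z|\simeq 1$ and $\widetilde d(z)\simeq d(z)$ on $A_1$, the integrand becomes $\lesssim d(z)^{2(k-j-1)}$, which is locally bounded for $j\le k-1$ and therefore integrable over the bounded set $A_1$. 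On $A_2$, one has $|K_j(z,w)|^2\lesssim|z|^{-4}$ uniformly in $w\in\gamma$ and $\widetilde d(z)\lesssim \log|z|$, so the contribution reduces in polar coordinates to $\int_R^\infty r^{-3}(\log r)^{2k-1}\,dr<\infty$. On $A_3$ the kernel is smooth and only the behaviour at $z=0$ needs comment: there $\widetilde d(z)\sim\log(1/|z|)$, but $|K_j(z,w)|^2\lesssim|z|^{2j-2}$ for $w$ on $\gamma$ bounded away from the origin, yielding the convergent integral $\int_0^\rho r^{2j-1}(\log 1/r)^{2k-1}\,dr$ for every $j\ge 1$.

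Summing these three regional contributions shows $\|K_j\|_{\mathrm{HS}}<\infty$ for each $1\le j\le k-1$. For $K_k$ the cutoff $(1-\chi_\delta)$ excludes region $A_1$ entirely, so only the $A_2$- and $A_3$-estimates enter, and on $A_3$ the integrand is now bounded since $|w-z|\ge \delta$ on the support of the kernel. The only non-routine ingredient is the Lipschitz estimate $\int_\gamma |w-z|^{-(2j+2)}|dw|\lesssim d(z)^{-(2j+1)}$, which is obtained by parametrizing $\gamma$ near the closest point $w_0\in\gamma$ to $z$ and bi-Lipschitzly comparing with the tangent line through $w_0$; this is the only place the regularity of $\gamma$ enters. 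I expect no serious obstacle beyond careful bookkeeping of the two regimes where $\widetilde d(z)$ fails to be comparable to $d(z)$, namely near the origin and near infinity; in both those regimes the factor $z^{j-1}$ or the quadratic decay of the kernel exactly compensates the logarithmic growth of $\widetilde d$.
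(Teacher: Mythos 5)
Your proposal is correct and takes essentially the same route as the paper's proof: both realize $K_j$ as integral operators with kernel bounds $|z|^{j-1}|w-z|^{-(j+1)}$ and check the Hilbert--Schmidt integral by splitting $\C\setminus\gamma$ into a boundary strip (excluded for $K_k$ by the cutoff, and controlled for $j\le k-1$ via $\int_\gamma|w-z|^{-2j-2}\,|dw|\lesssim d(z)^{-2j-1}$ and $\widetilde d(z)\simeq d(z)$ near $\gamma$), a neighborhood of infinity where the squared kernel decays like $|z|^{-4}$ against the logarithmic growth of $\widetilde d$, and a bounded remainder where only the integrable logarithmic singularity of $\widetilde d$ at $z=0$ needs care (which you in fact treat more explicitly than the paper does). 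The only small caveat is that if $\delta$ is large your strip $A_1=\{0<d(z)<\delta/2\}$ may contain a neighborhood of the origin or far exterior points, where the stated comparisons $|z|\simeq 1$ and $\widetilde d(z)\simeq d(z)$ fail; this is harmless, since $K_1,\dots,K_{k-1}$ do not depend on $\delta$ and the cutoff in $K_k$ still annihilates the strip, so you may simply take the strip half-width to be $\tfrac12\min(\delta,\min_\theta r_\gamma(\theta))$.
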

\begin{proof}
We have that $(K_j)_{j = 1}^k$ are integral operators with kernel estimates
\begin{align*}
    |\widetilde K_j(z,w)| &\simeq |z|^{j-1}|w-z|^{-j-1}, \quad \quad 1 \le j \le k-1, \\
    |\widetilde K_k(z,w)| &\simeq (1-\chi_\delta(z))|z|^{k-1}|w-z|^{-k-1}.
\end{align*}
Now, we have for $1 \le j \le k$, $w \in \gamma$, and $z \in \Omega^+ \setminus \Omega^+_\delta$, that
\[|\widetilde K_{j}(z,w)|^2  \simeq  |z|^{2j-2} |w-z|^{-2j-2}  \lesssim 1,\]
uniformly in $z$ and $w$. Likewise, for $z \in \Omega^- \setminus \Omega^-_\delta$, we have that
\[|\widetilde K_{j}(z,w)|^2 \widetilde d(z)^{2k-1} \simeq  |z|^{2j-2} |w-z|^{-2j-2} \widetilde d(z)^{2k-1} \simeq |z|^{-4}\widetilde d(z)^{2k-1} \lesssim |z|^{-3},\]
uniformly in $z$ and $w$. 
Finally, for $1 \le j \le k-1$, we have that
\begin{multline*}
    \int_{\Omega_\delta} |z|^{2j-2} \int_\gamma |w-z|^{-2j-2}\,|dw|\, \widetilde d(z)^{2k-1}dA(z) \\
    \simeq \int_{\Omega_\delta} d(z)^{-2j-1} \,\widetilde d(z)^{2k-1}dA(z) 
     \simeq \int_{\Omega_\delta} d(z)^{2k-2j-2} dA(z) 
    < \infty.
\end{multline*}
Hence
\[    \int_\gamma \int_{\C\setminus \gamma}  |\widetilde K_{j}(z,w)|^2\, \widetilde d(z)^{2k-1} dA(z) |dw| < \infty, \quad j = 1,...,k,\]
so $K_j$, $j = 1,...,k$, are Hilbert-Schmidt operators.
\end{proof}
\begin{proof}[Proof of Theorem~\ref{thm:square_function_estimates}]  
Let $r_0 = \min_{\theta \in [0,2\pi]} r_\gamma(\theta)$ and suppose first $\delta < r_0$. Define $\widetilde C_k : L_2(\gamma) \to L_2(\C\setminus \gamma, \widetilde d(z)^{2k-1}dA(z))$, $\widetilde C_kf(z) = z^{-1}(z\partial_z)^k F(z)$. Then Corollary~\ref{cor:dirty_square_function_estimates} implies that  $\dim \ker (\widetilde C_k) = 1$ and that $\widetilde C_k$ is bounded and has closed range. 
To see that the range of $\widetilde C_k$ is closed, note that for any Cauchy sequence $g_n \in \ran (\widetilde C_k)$, we may take $f_n \in \H$ so that $\widetilde C_kf_n = g_n$. 
But then, by Corollary~\ref{cor:dirty_square_function_estimates}, it follows that
\[\|f_m - f_n\| \simeq \|\widetilde C_k(f_m - f_n)\| = \|g_m-g_n\| \to 0, \quad m,n \to \infty,\]
so $f_n \to f$ for some $f \in \H$, and consequently $g_n \to \widetilde C_kf \in \ran (\widetilde C_k)$. 

Then $C_k:=\widetilde C_k - \sum_{j=1}^kK_j$ is a semi-Fredholm operator by Lemma~\ref{lemma:compact_remainder_terms} and perturbation theory for Fredholm operators.
That is,
\[C_kf(z) = \chi_\delta(z)z^{k-1}F^{(k)}(z)\]
is a closed-range operator with a finite-dimensional nullspace. 
It is immediate from Cauchy's integral theorem that $C_kf=0$ if $f$ is a polynomial of degree less than $k$.
On the other hand, if $C_kf = 0$ for some $f \in L_2(\gamma)$, then the Cauchy integral $F$ of $f$ is a polynomial $p^\pm$ on $\Omega^\pm$ of degree $\leq k-1$. 
Since $|p^-(z)| \lesssim \int_\gamma \frac{|f(w)|}{|w-z|^{k+1}}\,|dw| \to 0$ as $|z| \to \infty$, it follows that $p^- = 0$. 
By Theorem~\ref{thm:hardy_splitting}, $f$ has a unique Hardy decomposition $f = f^+ + f^-$, so $f = p^+ - p^- = p^+$.
We conclude that
\[\ker (C_k) = \{p \in L_2(\gamma)\:;p \text{ is a polynomial of degree }\le k-1\}.\]
Now, since $C_k$ has closed range, we have by the bounded inverse theorem that
\[\|f\| \simeq \|C_kf\| \]
for all $f \in (\ker (C_k))^\perp$. 
Consequently,
\[\|f\|^2 \simeq \|C_kf\|^2 + \sum_{j=0}^{k-1}|F^{(j)}(0)|^2\]
for all $f \in L_2(\gamma)$. 
Since $\delta < r_0$, we have $|z| \simeq 1$ and $\widetilde d(z) \simeq d(z)$ on $\Omega^+_\delta$ and $\Omega^-_\delta$, hence
\begin{align*}
    \|C_kf\|^2 
    \simeq \int_{\Omega^+_\delta}|F^{(k)}(z)|^2\,d(z)^{2k-1}dA(z) + \int_{\Omega^-_\delta}|F^{(k)}(z)|^2\, d(z)^{2k-1}dA(z).
\end{align*}
This shows Theorem~\ref{thm:square_function_estimates} when $\delta < r_0$. 

Suppose now $r_0 \leq \delta \leq \infty$. Then for every $f \in L_2(\gamma)$,
\begin{multline*}
    \|f\|^2  \simeq \int_{\Omega^+_{r_0/2}}|F^{(k)}(z)|^2\,d(z)^{2k-1}dA(z) + \int_{\Omega^-_{r_0/2}}|F^{(k)}(z)|^2\, d(z)^{2k-1}dA(z) \\
    + \sum_{j=0}^{k-1}|F^{(j)}(0)|^2
     \leq \int_{\Omega^+_\delta}|F^{(k)}(z)|^2\,d(z)^{2k-1}dA(z) \\
     + \int_{\Omega^-_\delta}|F^{(k)}(z)|^2\, d(z)^{2k-1}dA(z) + \sum_{j=0}^{k-1}|F^{(j)}(0)|^2,
\end{multline*}
showing that the estimate $\lesssim$ holds. Let $U^\pm_\delta = \Omega_\delta^\pm \setminus \Omega_{r_0/2}^\pm$. We claim that the mappings $f \mapsto \chi_{U^\pm_\delta}F^{(k)}$ from $L_2(\gamma)$ to $L_2(\C\setminus\gamma, d(z)^{2k-1})$ are bounded. For any $\delta$, $f \mapsto \chi_{U^+_\delta}F^{(k)}$ is bounded since $d(z)$ and the kernel of $F^{(k)}$ are bounded on $U^+_\delta$. If $\delta < \infty$, the same is true for $f \mapsto \chi_{U^-_\delta}F^{(k)}$. In the case of $\delta = \infty$, we have
\begin{align*}
    \int_{U_\delta^-}|F^{(k)}(z)|^2d(z)^{2k-1}\,dA(z) & \leq \|f\|^2\int_{U_\delta^-}\int_\gamma \frac{1}{|w-z|^{2k+2}}\,|dw| d(z)^{2k-1}\,dA(z) \\
    & \lesssim \|f\|^2\int_{\Omega^-}|z|^{-3}\,dA(z) \lesssim \|f\|^2.
\end{align*}
Thus, $f \mapsto \chi_{U^-_\delta}F^{(k)}$ is bounded for any $\delta$. Writing $\Omega^\pm_\delta = \Omega_{r_0/2}^\pm \cup U^\pm_\delta$, it follows that

\begin{multline*}
     \int_{\Omega^+_\delta}|F^{(k)}(z)|^2\,d(z)^{2k-1}dA(z) + \int_{\Omega^-_\delta}|F^{(k)}(z)|^2\, d(z)^{2k-1}dA(z) + \sum_{j=0}^{k-1}|F^{(j)}(0)|^2 \\
    \lesssim  \int_{\Omega^+_{r_0/2}}|F^{(k)}(z)|^2\,d(z)^{2k-1}dA(z) + \int_{\Omega^-_{r_0/2}}|F^{(k)}(z)|^2\, d(z)^{2k-1}dA(z) \\
    + \sum_{j=0}^{k-1}|F^{(j)}(0)|^2 + \|f\|^2 \lesssim  \|f\|^2.
\end{multline*}
We conclude that Theorem~\ref{thm:square_function_estimates} holds for each fixed $\delta \in (0,\infty]$.  
\end{proof}

\section{Whitney sets}
\label{appendix:whitney_partition}
The goal of this appendix is prove some details regarding the existence of Whitney sets, and to show that the explicit set $\cup_{l = 0}^\infty \{q_{j,l}\}_{j = 1}^{n_l}$ of Example~\ref{ex:explicit_whitney_partition} is a Whitney set for $\Omega^-_{1/\mathcal{L}}$ with respect to $\gamma$. The latter objective is accomplished by constructing a certain Whitney set for $\mathbb{D}^-_1 = \{z : 1<|z|<2\}$ with respect to the unit circle $\T$, and then mapping this set to $\Omega^-$ using the bi-Lipschitz map $re^{i\theta} \mapsto rr_\gamma(\theta)e^{i\theta}$. The next lemma gives a sufficient condition for Condition~\ref{item:whitney_set_cond_2} of Definition~\ref{def:whitney_set} to be satisfied.
\begin{lemma}\label{lemma:whitney_covering_lemma}
    Let $A \subset \R^n$ be closed, and $\{x_j\}_{j = 1}^\infty \subset A^c$. Fix $\varepsilon \leq 1/4$ and $\sigma > 1$, and let $B_j = B(x_j,\varepsilon d(x_j,A))$. Suppose $\{E_j\}_{j = 1}^\infty$ is a collection of measurable sets with disjoint interiors such that $E_j \subset B_j$, $m(\partial E_j) = 0$, and $m(E_j) \geq \frac{1}{\sigma}m(B_j)$. Then $\sum_{j = 1}^\infty \chi_{B_j}(x) \leq 4^n\sigma$ for all $x \in \R^n$.
\end{lemma}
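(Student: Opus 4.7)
The plan is to bound the overlap count $|\{j : x \in B_j\}|$ at a fixed point $x$ by a direct volume comparison: all balls $B_j$ containing $x$ have comparable size and sit inside a common slightly larger ball, while the subsets $E_j$ occupy definite fractions of each $B_j$ and have disjoint interiors.

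I would first fix $x \in \R^n$, set $\mathcal{I}_x = \{j : x \in B_j\}$, and let $r = d(x,A)$. If $\mathcal{I}_x = \emptyset$ the conclusion is trivial. Otherwise, since each $B_j$ lies in $A^c$ (because $\varepsilon d(x_j,A) < d(x_j,A)$), we have $x \in A^c$ and $r > 0$. For each $j \in \mathcal{I}_x$, the triangle inequality together with $|x-x_j| < \varepsilon d(x_j,A)$ gives the two-sided comparison
\[
\frac{r}{1+\varepsilon} \;<\; d(x_j,A) \;<\; \frac{r}{1-\varepsilon},
\]
from which $B_j \subset B\bigl(x,\, 2\varepsilon d(x_j,A)\bigr) \subset B\bigl(x,\, 2\varepsilon r/(1-\varepsilon)\bigr)$ for every $j \in \mathcal{I}_x$.

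I would then carry out the volume comparison. Since the $E_j$ have pairwise disjoint interiors and $m(\partial E_j)=0$, the measure of their union equals the sum of their measures, so
\[
\sum_{j \in \mathcal{I}_x} m(E_j) \;=\; m\Bigl(\bigcup_{j \in \mathcal{I}_x} E_j\Bigr) \;\leq\; m\bigl(B(x,\, 2\varepsilon r/(1-\varepsilon))\bigr).
\]
On the other hand, $m(E_j) \geq \sigma^{-1} m(B_j) \geq \sigma^{-1} m\bigl(B(0,\,\varepsilon r/(1+\varepsilon))\bigr)$ by the lower bound on $d(x_j,A)$ from the previous step. Dividing, and noting that the measure of the unit ball cancels, gives
\[
|\mathcal{I}_x| \;\leq\; \sigma \Bigl(\frac{2(1+\varepsilon)}{1-\varepsilon}\Bigr)^{\!n}.
\]

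The final step is the only place the hypothesis $\varepsilon \leq 1/4$ enters: the map $\varepsilon \mapsto 2(1+\varepsilon)/(1-\varepsilon)$ is increasing on $[0,1)$, and at $\varepsilon = 1/4$ it equals $10/3 < 4$. This yields the desired bound $|\mathcal{I}_x| \leq \sigma \cdot 4^n$. The argument has no genuine obstacle; the only delicate point is the bookkeeping of sharp constants, and in particular noting that the hypothesis $\varepsilon \leq 1/4$ (rather than the weaker $\varepsilon \leq 1/2$ used in Definition~\ref{def:whitney_set}) is exactly what forces the ratio under $4$ in the last inequality — a hypothesis $\varepsilon \leq 1/2$ would instead yield a weaker bound of $6^n\sigma$.
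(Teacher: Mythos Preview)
Your proof is correct and follows essentially the same volume-comparison argument as the paper: bound $d(x_j,A)$ above and below in terms of $d(x,A)$, trap all the $E_j$ with $j\in\mathcal{I}_x$ in one large ball, and divide. The only cosmetic difference is that the paper plugs in $\varepsilon\le 1/4$ early to work with the explicit constants $\tfrac{2}{3}$ and $\tfrac{8}{3}$, whereas you keep the $\varepsilon$-dependence until the final step; both routes yield the same bound $4^n\sigma$.
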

\begin{proof}
    Let $d(x) = d(x,A)$. We need to show that for a given $x \in \cup_{j = 1}^\infty B_j$, $x$ is contained in at most $4^n\sigma$ balls. After relabeling, we can assume that $x \in \cap_{j = 1}^J B_{j}$, for some $1 \leq J \leq \infty$. For each $j$, we have $|x_j-x| \le \varepsilon d(x_j) \le \varepsilon (|x_j - x| + d(x))$. Thus, 
    \begin{align*}
        |x_j-x| \le \frac{\varepsilon}{1-\varepsilon }d(x) \leq \frac{4}{3}\varepsilon d(x),
    \end{align*}
    which implies in particular $\frac{2}{3}d(x) \leq d(x_j) \leq \frac{4}{3}d(x)$. It follows that $E_{j} \subset B(x,\frac{8}{3}\varepsilon d(x))$ for every $j$. We therefore have
    \begin{align}\label{eq:cov_lem_2}
         \sum_{j = 1}^J m(E_{j}) = m\Bigg(\bigcup_{j = 1}^J E_{j}\Bigg) \leq \alpha(n)\bigg(\frac{8}{3}\varepsilon d(x)\bigg)^n,  
    \end{align}
    where $\alpha(n)$ is the volume of the unit ball. By assumption, we have that $m(E_{j}) \geq \frac{1}{\sigma}\alpha(n)(\varepsilon d(x_j))^n$, which combined with $d(x_j) \geq \frac{2}{3}d(x)$ and \eqref{eq:cov_lem_2} gives
    \begin{equation*}  
         J\alpha(n)\bigg(\frac{2}{3} \varepsilon d(x) \bigg)^n \frac{1}{\sigma}\leq\alpha(n)\bigg(\frac{8}{3}\varepsilon d(x)\bigg)^n \quad \iff \quad J \leq 4^n\sigma. \qedhere
    \end{equation*}
\end{proof}
Next follows two lemmata, which are used in Section~\ref{sec:discrete_frames}. \begin{lemma}\label{lem:existence_of_whitney_sets}
    Let $A \subset \R^n$ be closed, and let $\O \subset A^c$. Let $\varepsilon \leq 1/8$. There exists an $\varepsilon$-fine Whitney set $C \subset \O$ for $\O$ with respect to $A$ such that $\sum_{x \in C}\chi_{B(x,2\varepsilon d(x))}(y) \leq 3(40)^n$ for all $y \in \R^n$.
\end{lemma}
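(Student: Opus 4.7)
The approach is a standard maximal-packing construction, calibrated so that both the covering condition and the overlap bound fall out of Lemma~\ref{lemma:whitney_covering_lemma}. Write $d(x) = d(x,A)$ and fix a packing radius parameter $\alpha = \varepsilon/5$. By Zorn's lemma, pick a maximal subset $C \subset \O$ with the property that the open balls $\{B(x,\alpha d(x)) : x \in C\}$ are pairwise disjoint. Any collection of pairwise disjoint open balls in $\R^n$ is countable by separability, so I may enumerate $C = \{x_j\}_{j\ge 1}$.

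To verify Condition~\ref{def:whitney_set_cond_1} of Definition~\ref{def:whitney_set}, fix $y \in \O \setminus C$. By maximality, $B(y,\alpha d(y))$ meets some $B(x_j,\alpha d(x_j))$, so $|y-x_j| < \alpha(d(y)+d(x_j))$. Combining this with the triangle inequality $d(y) \le d(x_j)+|y-x_j|$ yields $(1-\alpha) d(y) < (1+\alpha) d(x_j)$, and back-substituting gives
\[
|y-x_j| \;<\; \alpha\!\left(d(x_j) + \tfrac{1+\alpha}{1-\alpha}d(x_j)\right) \;=\; \tfrac{2\alpha}{1-\alpha}\,d(x_j).
\]
With $\alpha = \varepsilon/5$ and $\varepsilon \le 1/8$, a direct check gives $\tfrac{2\alpha}{1-\alpha} = \tfrac{2\varepsilon/5}{1-\varepsilon/5} \le \varepsilon$, so $y \in B(x_j,\varepsilon d(x_j))$. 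Thus the balls $B(x_j,\varepsilon d(x_j))$ cover $\O$.

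For the overlap estimate, I would apply Lemma~\ref{lemma:whitney_covering_lemma} to the doubled balls $B_j := B(x_j, 2\varepsilon d(x_j))$, using the packing cells $E_j := B(x_j,\alpha d(x_j)) \subset B_j$. The hypothesis $2\varepsilon \le 1/4$ of that lemma is guaranteed by $\varepsilon \le 1/8$; the $E_j$ are pairwise disjoint (hence have disjoint interiors) by construction, and $m(\partial E_j) = 0$. The volume ratio is $m(E_j)/m(B_j) = (\alpha/(2\varepsilon))^n = 10^{-n}$, so taking $\sigma = 10^n$ in Lemma~\ref{lemma:whitney_covering_lemma} yields
\[
\sum_{j=1}^{\infty} \chi_{B_j}(y) \;\le\; 4^n\cdot 10^n \;=\; 40^n \;\le\; 3(40)^n \qquad \text{for all } y \in \R^n.
\]
In particular, Condition~\ref{item:whitney_set_cond_2} of Definition~\ref{def:whitney_set} also holds, so $C$ is the desired $\varepsilon$-fine Whitney set.

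\textbf{Main obstacle.} The only real technical point is calibrating a single parameter $\alpha$ to fulfil two competing demands: $\alpha$ must be small enough that $\tfrac{2\alpha}{1-\alpha} \le \varepsilon$, which is what converts maximality of the $\alpha$-packing into coverage by the $\varepsilon$-balls; yet $\alpha$ must not be too small relative to $2\varepsilon$, otherwise the volume ratio $(\alpha/(2\varepsilon))^n$ in Lemma~\ref{lemma:whitney_covering_lemma} would blow up the overlap constant. The choice $\alpha = \varepsilon/5$ balances both under the standing hypothesis $\varepsilon \le 1/8$, and matches the stated bound $3(40)^n$ with room to spare.
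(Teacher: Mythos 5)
Your proof is correct, but it follows a genuinely different route from the paper. The paper stratifies $A^c$ into geometric shells $V_k$ (where $d(\cdot,A)$ is comparable, with ratio $\beta=1/(1-2\varepsilon)$), applies Vitali's covering theorem within each shell to extract a disjoint family of $\tfrac{\varepsilon}{5}$-balls whose $\varepsilon$-dilates cover $\O\cap V_k$, and then invokes Lemma~\ref{lemma:whitney_covering_lemma} per shell; since a ball $B(x,2\varepsilon d(x))$ can meet at most three consecutive shells, this costs the factor $3$ in the bound $3(40)^n$. You instead take a single global maximal packing (via Zorn) of balls $B(x,\tfrac{\varepsilon}{5}d(x))$, and your covering step replaces the Vitali machinery by the observation that if $B(y,\alpha d(y))$ meets $B(x_j,\alpha d(x_j))$ then $(1-\alpha)d(y)<(1+\alpha)d(x_j)$, so $|y-x_j|<\tfrac{2\alpha}{1-\alpha}d(x_j)\le\varepsilon d(x_j)$ --- this is exactly the point that makes bounded radii (the hypothesis behind the cited Vitali theorem, and the reason the paper layers) unnecessary, because $d(\cdot,A)$ is $1$-Lipschitz and hence comparable at centers of intersecting balls. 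Your application of Lemma~\ref{lemma:whitney_covering_lemma} with $E_j=B(x_j,\tfrac{\varepsilon}{5}d(x_j))$, $B_j=B(x_j,2\varepsilon d(x_j))$, $\sigma=10^n$ is then legitimate (the $E_j$ are globally disjoint, $2\varepsilon\le 1/4$), and it yields the sharper overlap constant $40^n\le 3(40)^n$. The trade-off: the paper leans on a standard cited covering theorem and avoids an abstract maximality argument, while your version is more self-contained, skips the shell bookkeeping, and improves the constant; just note explicitly that unions of chains of pairwise-disjoint families are pairwise disjoint (so Zorn applies) and that disjoint open balls are countable, both of which you essentially did.
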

\begin{proof}
    Let $d(x) = d(x,A)$, $V_0 = \{x \in A^c : d(x) \leq 1\}$, and for $k \geq 1$ let
    \begin{align*}
        V_k = \{x \in A^c ; \beta^{k-1} < d(x) \leq \beta^{k}\}, \quad \beta = \frac{1}{1-2\varepsilon}.
    \end{align*}
    Let $k \geq 0$ and $l \geq k+2$. Let $x \in V_l$ and $x' \in B(x, 2\varepsilon d(x))$. Then
    \begin{align*}
        d(x') \geq d(x) - |x'-x| > (1-2\varepsilon)d(x) > \beta^{l-2} \geq \beta^{k},
    \end{align*}
    showing that $B(x,2\varepsilon d(x)) \cap V_k = \emptyset$. Let $k \geq 2$ and $0 \leq l \leq k-2$, then
    \begin{align*}
        d(x') \leq d(x) + |x'-x| < (1+ 2\varepsilon)d(x) \leq (1+2\varepsilon) \beta^{l} < \beta^{k-1},
    \end{align*}
    showing again that $B(x, 2\varepsilon d(x)) \cap V_k = \emptyset$. We conclude that $B(x, 2\varepsilon d(x)) \cap V_k = \emptyset$ whenever $x \in V_l$ and $l \notin [k-1,k+1]$. 

    Let $\F_k = \{B(x,\frac{1}{5}\varepsilon d(x)): x\in \O \cap V_k \}$, $k \geq 0$. Since $\sup\{\textrm{diam}\,B : B \in \F_k\} < \infty$, Vitali's covering theorem (see Evans and Gariepy~\cite[Thm~1.24]{EvansGariepy_2015}) shows that there exists a countable set $C_k \subset \O \cap V_k$ such that the collection $\{B(x,\frac{1}{5}\varepsilon d(x))\}_{x \in C_k}$ is disjoint and $\O \cap V_k \subset \cup_{x \in C_k} B(x,\varepsilon d(x))$. Let $C = \cup_k C_k$. Then $\O = \cup_{k = 0}^\infty \O \cap V_k \subset \cup_{x \in C} B(x,\varepsilon d(x))$.
    
    By Lemma~\ref{lemma:whitney_covering_lemma}, $\sum_{x \in C_k} \chi_{B(x,2\varepsilon d(x))} \leq (40)^n$. Let $y \in A^c$. Then $y \in V_k$ for some $k \geq 0$. Thus, 
    \begin{multline*}
        \sum_{x \in C}\chi_{B(x,2\varepsilon d(x))}(y) = \sum_{l = 0}^\infty\sum_{x \in C_l} \chi_{B(x,2\varepsilon d(x))}(y) \\
        = \sum_{l = k-1}^{k+1}\sum_{x \in C_l} \chi_{B(x,2\varepsilon d(x))}(y) \leq 3(40)^n
    \end{multline*}
    whenever $k \geq 1$. If $k = 0$, the sum is bounded by $2(40)^n$. This proves the claimed bound on the sum, and that $C$ is an $\varepsilon$-fine Whitney set for $\O$ with respect to $A$.
\end{proof}

\begin{lemma}\label{lem:extracting_sub_whitney_sets}
    Let $\varepsilon \leq 1/20$. Let $A \subset \R^n$ be closed, and let $\O \subset A^c$, and suppose $\{x_j\}_{j = 1}^\infty$ is an $\varepsilon$-fine Whitney set for $\O$ with respect to $A$. There exists $\J \subset \N$ such that $\{x_j\}_{j \in \J}$ is a $5\varepsilon$-fine Whitney set for $\O$ with respect to $A$, with covering constant $N \leq 3(20)^n$
\end{lemma}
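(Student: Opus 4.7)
The plan is to mimic the layering-plus-Vitali argument of Lemma~\ref{lem:existence_of_whitney_sets}, but applied to the already given Whitney set rather than to arbitrary balls centered in $\O$. Concretely, I would partition the index set $\N$ according to the distance $d(x_j,A)$, apply Vitali's covering theorem inside each layer to thin out the collection $\{B(x_j,\varepsilon d(x_j,A))\}_{j\in V_k}$, and then verify the resulting subfamily meets both conditions of Definition~\ref{def:whitney_set} for the larger parameter $5\varepsilon$.

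First I would set $\beta = 1/(1-5\varepsilon)$, $V_0 = \{j : d(x_j,A)\leq 1\}$ and $V_k = \{j : \beta^{k-1}< d(x_j,A)\leq \beta^k\}$ for $k\geq 1$. Inside each $V_k$ the radii $\varepsilon d(x_j,A)$ are uniformly bounded, so Vitali's covering theorem (see Evans--Gariepy~\cite[Thm~1.24]{EvansGariepy_2015}) yields a countable $\J_k\subset V_k$ such that the balls $\{B(x_j,\varepsilon d(x_j,A))\}_{j\in\J_k}$ are pairwise disjoint and their five-times enlargements $\{B(x_j,5\varepsilon d(x_j,A))\}_{j\in\J_k}$ cover $\bigcup_{j\in V_k}B(x_j,\varepsilon d(x_j,A))$. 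Setting $\J = \bigcup_k \J_k$ then gives
\begin{equation*}
\O \subset \bigcup_{j\in\N} B(x_j,\varepsilon d(x_j,A)) \subset \bigcup_{j\in\J} B(x_j,5\varepsilon d(x_j,A)),
\end{equation*}
which is Condition~\ref{def:whitney_set_cond_1} of Definition~\ref{def:whitney_set} for the parameter $5\varepsilon$.

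For Condition~\ref{item:whitney_set_cond_2}, the main point is that within a single layer $V_k$ the disjoint interior balls $E_j := B(x_j,\varepsilon d(x_j,A))$, $j\in\J_k$, occupy a fraction $5^{-n}$ of their enlargements $B(x_j,5\varepsilon d(x_j,A))$, so Lemma~\ref{lemma:whitney_covering_lemma} applied with enlargement parameter $5\varepsilon$ and density $\sigma = 5^n$ gives $\sum_{j\in\J_k}\chi_{B(x_j,5\varepsilon d(x_j,A))}\leq 4^n\cdot 5^n = 20^n$. This is precisely where the hypothesis $\varepsilon\leq 1/20$ enters: Lemma~\ref{lemma:whitney_covering_lemma} demands the (enlarged) parameter $5\varepsilon\leq 1/4$. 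It remains to see that at most three layers contribute simultaneously at any point $y$: if $y\in B(x_j,5\varepsilon d(x_j,A))$ with $x_j\in V_k$, then $(1-5\varepsilon)d(x_j,A)\leq d(y,A)\leq(1+5\varepsilon)d(x_j,A)$, so with the choice $\beta = 1/(1-5\varepsilon)$ one computes $\beta^{k-2}<d(y,A)\leq\beta^{k+1}$, forcing $y\in V_{k-1}\cup V_k\cup V_{k+1}$ (using $(1+5\varepsilon)(1-5\varepsilon)\leq 1$ for the upper bound). Summing the layer-wise estimate over the at most three relevant values of $k$ yields $\sum_{j\in\J}\chi_{B(x_j,5\varepsilon d(x_j,A))}(y)\leq 3\cdot 20^n$, completing the proof.

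The main obstacle is essentially bookkeeping: making sure the thresholds in the layering are chosen so that (i)~the $5$-fold enlargement used by Vitali matches the target fineness, (ii)~the conditions of Lemma~\ref{lemma:whitney_covering_lemma} are met, and (iii)~the cross-layer interaction is limited to three layers. The choice $\beta = 1/(1-5\varepsilon)$ makes all three compatible under the single assumption $\varepsilon\leq 1/20$.
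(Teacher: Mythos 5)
Your proposal is correct and follows essentially the same route as the paper: layer the indices by $d(x_j,A)$ with $\beta=1/(1-5\varepsilon)$, thin each layer by Vitali's covering theorem so the disjoint balls $B(x_j,\varepsilon d(x_j,A))$ serve as the sets $E_j$ in Lemma~\ref{lemma:whitney_covering_lemma} with $\sigma=5^n$ (using $5\varepsilon\leq 1/4$), and conclude with the three-adjacent-layers observation to get $3\cdot 20^n$. The only differences are cosmetic (you spell out the cross-layer computation that the paper cites from Lemma~\ref{lem:existence_of_whitney_sets}).
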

\begin{proof}
    Let $d(x) = d(x,A)$. Define the sets $V_k$ as in Lemma~\ref{lem:existence_of_whitney_sets}, but with $2\varepsilon$ replaced by $5\varepsilon$. Let also $\I_k = \{j  \in \N : x_j\in V_k\}$, $k \geq 0$. Let $B_j = B(x_j,\varepsilon d(x_j))$ and $B_j' = B(x_j,5\varepsilon d(x_j))$. As in the proof of Lemma~\ref{lem:existence_of_whitney_sets}, we see that $B_j' \cap V_k = \emptyset$ whenever $j \in \I_l$ and $l \notin [k-1,k+1]$. Since $\sup\{\textrm{diam}\,B_j : j \in \I_k\} < \infty$, Vitali's covering theorem shows that there exists $\J_k \subset \I_k$ such that the collection $\{B_j\}_{j \in \J_k}$ is disjoint, and $\cup_{j \in \I_k}B_j \subset \cup_{j \in \J_k}B_j'$. Let $\J = \cup_k \J_k$. By Lemma~\ref{lemma:whitney_covering_lemma}, $\sum_{j \in \J_k} \chi_{B_j'}(y) \leq (20)^n$ for every $y \in \R^n$, and as in the proof of Lemma~\ref{lem:existence_of_whitney_sets}, we see that $\sum_{j \in \J}\chi_{B_j'}(y) \leq 3(20)^n$ for every $y \in \R^n$. Since $\O \subset \cup_{k = 0}^\infty \cup_{j \in \I_k} B_j \subset \cup_{j \in \J}B_j'$, it follows that $\{x_j\}_{j \in \J}$ is an $5\varepsilon$-fine Whitney set for $\O$ with respect to $A$, with covering constant $N \leq 3(20)^n$.
\end{proof}
The following lemma is used to show that the set $\cup_{l = 0}^\infty \{q_{j,l}\}_{j = 1}^{n_l}$ of Example~\ref{ex:explicit_whitney_partition} is a Whitney set for $\Omega^-_{1/\mathcal{L}}$ with respect to $\gamma$.
\begin{lemma}\label{lem:whitney_set_under_bilip_map}
    Let $\rho : \R^n \to \R^n$ be a bi-Lipschitz map, and define $\mathcal{L} = \max(\textup{Lip}(\rho),\textup{Lip}(\rho^{-1}))$. Let $A \subset \R^n$ be closed, and suppose $\varepsilon$, $\sigma$, $\{x_j\}_{j = 1}^\infty$, $\{B_j\}_{j = 1}^\infty$, and $\{E_j\}_{j = 1}^\infty$ are given as in Lemma~\ref{lemma:whitney_covering_lemma}. If $\mathcal{L}^2\varepsilon \leq 1/4$, then $\{\rho(x_j)\}_{j = 1}^\infty$ is a Whitney set for $\rho(\cup_{j = 1}^\infty B_j)$ with respect to $\rho(A)$, with parameters $(\mathcal{L}^2\varepsilon, N)$, where $N \leq 4^n \mathcal{L}^{3n+1}\sigma$.    
\end{lemma}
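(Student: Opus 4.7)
The plan is to verify the two conditions of Definition~\ref{def:whitney_set} for the image set $\{\rho(x_j)\}_{j=1}^\infty$ with Whitney parameter $\mathcal{L}^2\varepsilon$, and then to extract the multiplicity bound by applying Lemma~\ref{lemma:whitney_covering_lemma}. The workhorse throughout is the bi-Lipschitz distance distortion
\[
\mathcal{L}^{-1} d(x,A) \;\leq\; d(\rho(x), \rho(A)) \;\leq\; \mathcal{L}\, d(x,A), \qquad x \in A^c,
\]
which follows by applying $\textup{Lip}(\rho)$ and $\textup{Lip}(\rho^{-1}) \leq \mathcal{L}$ to $|\rho(x) - \rho(a)|$ with $a \in A$ and passing to the infimum. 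Note that $\rho(A)$ is closed since $\rho$ is a homeomorphism, so these distances make sense.

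For condition \ref{def:whitney_set_cond_1}, I take an arbitrary $y \in \rho\bigl(\bigcup_{j}B_j\bigr)$, write $y = \rho(z)$ with $|z - x_j| < \varepsilon d(x_j, A)$ for some $j$, and estimate
\[
|y - \rho(x_j)| \;\leq\; \mathcal{L}\,|z - x_j| \;<\; \mathcal{L}\varepsilon\, d(x_j, A) \;\leq\; \mathcal{L}^2\varepsilon\, d(\rho(x_j), \rho(A)).
\]
Thus $y$ lies in the enlarged ball centered at $\rho(x_j)$, and the hypothesis $\mathcal{L}^2\varepsilon \leq 1/4$ matches the constraint of Lemma~\ref{lemma:whitney_covering_lemma}.

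For condition~\ref{item:whitney_set_cond_2}, I will invoke Lemma~\ref{lemma:whitney_covering_lemma} with the test sets $E_j' := \rho(E_j)$ sitting inside $B_j' := B(\rho(x_j), \mathcal{L}^2\varepsilon\, d(\rho(x_j), \rho(A)))$. Four items must be checked: (a) containment $E_j' \subset B_j'$, which is precisely the previous estimate applied to $E_j \subset B_j$; (b) pairwise disjoint interiors, which follows because $\rho$ is a homeomorphism so $\textup{int}(\rho(E_j)) = \rho(\textup{int}(E_j))$; (c) $m(\partial E_j') = 0$, because $\partial E_j' \subset \rho(\partial E_j)$ and a Lipschitz self-map of $\R^n$ sends Lebesgue null sets to Lebesgue null sets; and (d) the volume lower bound, which is the crux. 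For (d), I combine the measure distortion $m(\rho(E_j)) \geq \mathcal{L}^{-n} m(E_j)$ (obtained by applying $m(\rho^{-1}(F)) \leq \mathcal{L}^{n} m(F)$ to $F = \rho(E_j)$) with the hypothesis $m(E_j) \geq \sigma^{-1} m(B_j)$ and the ball-ratio bound $m(B_j') \leq \mathcal{L}^{3n}\, m(B_j)$ that comes from the distance distortion. Chaining these yields
\[
\frac{m(E_j')}{m(B_j')} \;\gtrsim\; \frac{1}{\mathcal{L}^{a(n)}\sigma}
\]
for a suitable exponent $a(n)$, after which Lemma~\ref{lemma:whitney_covering_lemma} delivers the multiplicity bound.

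The main obstacle is nothing conceptual but rather the careful bookkeeping of the powers of $\mathcal{L}$: each of the three separate appeals to the bi-Lipschitz estimate (covering, measure distortion, ball comparison) contributes its own power, and the stated bound $N \leq 4^n \mathcal{L}^{3n+1}\sigma$ requires tracking these contributions tightly, exploiting the freedom to apply the distortion inequality in whichever direction gives the sharper estimate at each step.
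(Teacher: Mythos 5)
Your proposal is correct and follows essentially the same route as the paper: condition~\ref{def:whitney_set_cond_1} via the chain $|y-\rho(x_j)|\le\mathcal{L}|z-x_j|<\mathcal{L}\varepsilon\,d(x_j,A)\le\mathcal{L}^2\varepsilon\,d(\rho(x_j),\rho(A))$, and condition~\ref{item:whitney_set_cond_2} by feeding the images $\rho(E_j)$ into Lemma~\ref{lemma:whitney_covering_lemma}, which is exactly the paper's argument. The only point of divergence is the explicit covering constant: chaining the inequalities you actually state, namely $m(\rho(E_j))\ge\mathcal{L}^{-n}m(E_j)$, $m(E_j)\ge\sigma^{-1}m(B_j)$, and $m\bigl(B(\rho(x_j),\mathcal{L}^2\varepsilon\,d(\rho(x_j),\rho(A)))\bigr)\le\mathcal{L}^{3n}m(B_j)$, yields $N\le 4^n\mathcal{L}^{4n}\sigma$, not $4^n\mathcal{L}^{3n+1}\sigma$; the paper reaches the exponent $3n+1$ only by using $m(\rho(E_j))\ge\mathcal{L}^{-1}m(E_j)$, i.e.\ a volume distortion with exponent $1$ instead of $n$, which is justified only for $n=1$. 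So your closing hedge that ``tight bookkeeping'' recovers the stated exponent cannot be realized along this route for $n\ge 2$; your bound $4^n\mathcal{L}^{4n}\sigma$ is the defensible one, and the discrepancy is harmless for the way the lemma is used, since only a uniform bound on the covering constant is needed downstream.
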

\begin{proof}
    Let $d(x) = d(x,A)$, $d'(y) = d(y,\rho(A))$, $y_j = \rho(x_j)$, and set $B_j' = B(y_j, \mathcal{L}^2\varepsilon d'(y_j))$. Suppose $y \in \rho(\cup_{j = 1}^\infty B_j)$, with $y = \rho(x)$, $x \in B_j$. Then
    \begin{align}\label{eq:new_balls_contain_U}
        |y-y_j| \leq \mathcal{L}|x-x_j| \leq \mathcal{L}\varepsilon d(x_j) \leq \mathcal{L}^2\varepsilon d'(y_j).
    \end{align}
    Thus, $y \in B'_j$, showing that $\rho(\cup_{j = 1}^\infty B_j) \subset \cup_j B_j'$. Condition~\ref{def:whitney_set_cond_1} of Definition~\ref{def:whitney_set} is therefore satisfied. 

    Next, since $\rho$ is bi-Lipschitz, $\rho(E_j)$ is measurable, $\rho(E_j^o) = \rho(E_j)^o$ and $\rho(\partial E_j) = \partial \rho(E_j)$. Thus $\{\rho(E_j)\}_{j = 1}^\infty$ is a collection of measurable sets with disjoint interiors satisfying $m(\partial\rho(E_j)) = m(\rho(\partial E_j)) \leq \mathcal{L}m(\partial E_j) = 0$. If $y = \rho(x) \in \rho(E_j)$, then since $x \in E_j \subset B_j$,~\eqref{eq:new_balls_contain_U} shows that $\rho(E_j) \subset B_j'$. Moreover,
    \begin{multline*}
        m(\rho(E_j))  \geq \frac{1}{\mathcal{L}}m(E_j) \geq \frac{1}{\mathcal{L}\sigma} \alpha(n) (\varepsilon d(x_j))^n  \\
        \geq \frac{1}{\mathcal{L}\sigma} \alpha(n) \bigg(\varepsilon \frac{1}{\mathcal{L}}d'(y_j)\bigg)^n 
         = \frac{1}{\mathcal{L}^{3n+1}\sigma}m(B_j').
    \end{multline*}
    Lemma~\ref{lemma:whitney_covering_lemma} thus implies that $\sum_{j = 1}^\infty\chi_{B_j'}(x) \leq 4^n \mathcal{L}^{3n+1}\sigma$, showing that Condition~\ref{item:whitney_set_cond_2} of Definition~\ref{def:whitney_set} is also satisfied, with the claimed estimate for the covering constant $N$.
\end{proof}
\begin{figure}
     \centering
         \includegraphics[trim={0 0 14cm 0},clip,width=0.5\textwidth]{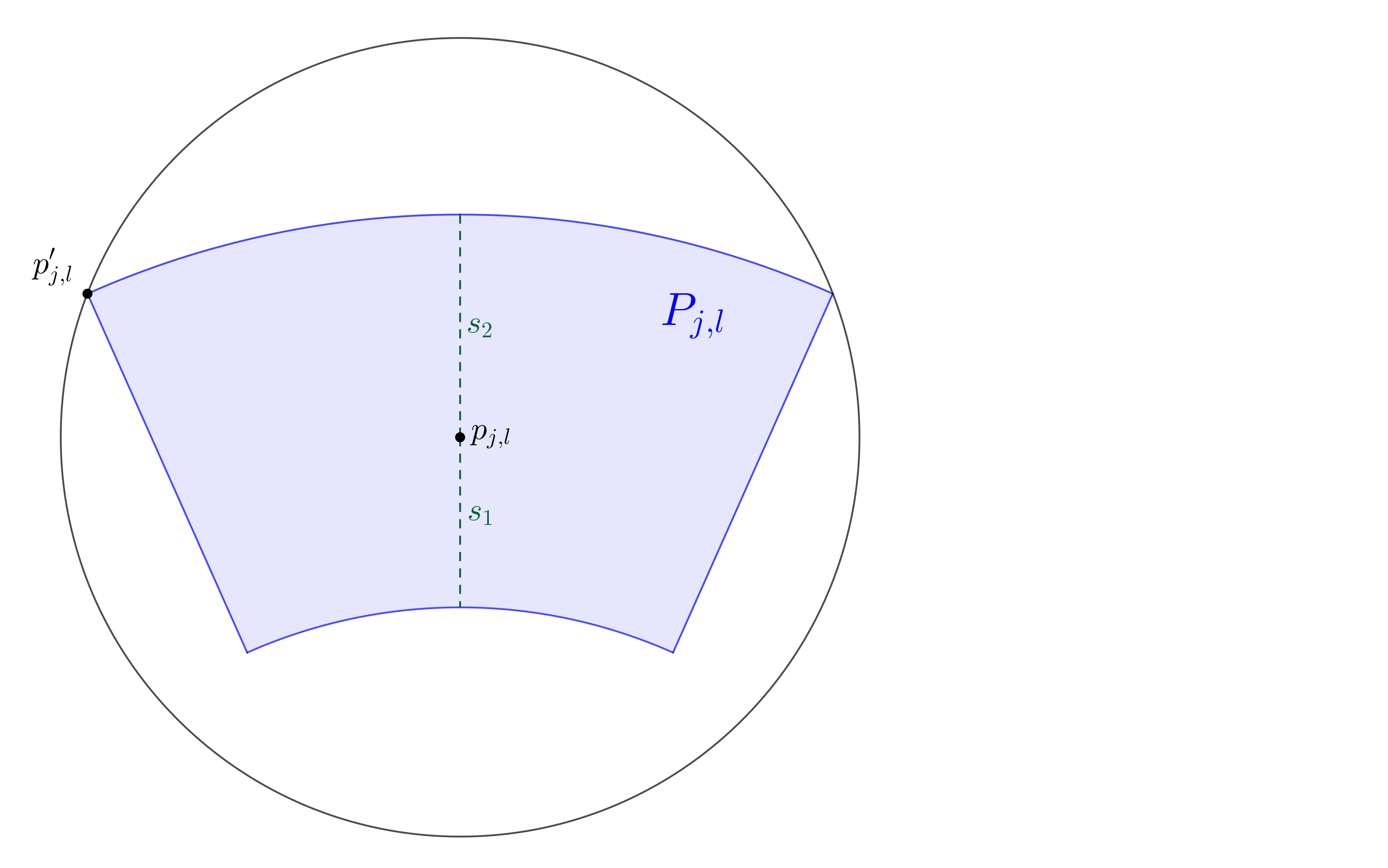}
         \caption{The annular sectors $P_{j,l}$ are shown together with their approximate midpoints $p_{j,l}$. The upper-left corner $p_{j,l}'$ of $P_{j,l}$ is the point in $P_{j,l}$ (together with the upper-right corner) that is farthest away from $p_{j,l}$. The numbers $s_1$ and $s_2$ are the lengths of the corresponding dashed lines.}
         \label{fig:annular_sec}
\end{figure}
We now turn attention back to Example~\ref{ex:explicit_whitney_partition}. Let $\varepsilon$, $\varepsilon'$, $\rho$, $\mathcal{L}$, $n_l$, $r_l$, and $\theta_{j,l}$, be as in that example.
Let $R_{-1} = (4+\varepsilon)/2$, and for $l \geq 0$ and $1 \leq j \leq n_l$, let
\begin{gather*}
    p_{j,l} = r_l e^{i\theta_{j,l}}, \quad R_l = \frac{r_l + r_{l+1}}{2}, \quad \varphi_{j,l} = -\pi + \frac{2\pi}{n_l}(j - 1/2), \\
    P_{j,l} = \{re^{i\varphi} : R_l\leq r \leq R_{l-1}, \varphi_{j,l} \leq \varphi \leq \varphi_{j+1,l} \}.
\end{gather*}
The sets $P_{j,l}$ are the "annular sectors" shown in Figure~\ref{fig:annular_sec}, and the points $p_{j,l}$ are approximately "midpoints" of $P_{j,l}$. Our plan is for the $P_{j,l}$:s to play the role of the sets $E_j$ in Lemma~\ref{lem:whitney_set_under_bilip_map}. As such, we need to verify the hypotheses on $P_{j,l}$ in Lemma~\ref{lem:whitney_set_under_bilip_map}. Because the geometry of these sets is very simple, only elementary computations are required for this, so we do not spell out all the details.

Clearly, $\cup_{l = 0}^\infty \{P_{j,l}\}_{j = 1}^{n_l}$ is a collection of measurable sets with disjoint interiors satisfying $m(\partial P_{j,l}) = 0$. For each $j$ and $l$, let $B_{j,l}=B(p_{j,l},\frac{3}{2}\varepsilon d(p_{j,l},\T))$. We claim that $P_{j,l} \subset B_{j,l}$, and that $m(P_{j,l}) \gtrsim m(B_{j,l})$ uniformly in $j$, $l$, and $\varepsilon$. Fix $j$, $l$, and let $s_1 = r_l - R_{l}$ and $s_2 = R_{l-1} - r_l$. Using that $s_2 > s_1$ and $2\pi/n_l < \pi$, one verifies, by parameterizing each boundary segment of $P_{j,l}$, that
\begin{equation*}
    \max_{z \in P_{j,l}}|z-p_{j,l}| = |p_{j,l}' - p_{j,l}| = \bigg[(R_{l-1}-r_l)^2 + 2r_lR_{l-1}\bigg(1-\cos \frac{\pi}{n_l}\bigg)\bigg]^{1/2},
\end{equation*}
where $p_{j,l}' = R_{l-1}e^{i\varphi_{j+1,l}}$; see Figure~\ref{fig:annular_sec}. By estimating $1 - \cos (\pi /n_l) < (\pi/n_l)^2/2$, and bounding $n_l$ below by $2\pi \varepsilon^{-1}(1+\varepsilon)^l$, one then obtains $|p_{j,l}' - p_{j,l}| < \frac{3}{2}\varepsilon d(p_{j,l},\T)$. Thus, $P_{j,l}\subset B_{j,l}$. Furthermore, by bounding $n_l$ above by $4\pi \varepsilon^{-1}(1+\varepsilon)^l$, straightforward calculations show that
\begin{align*}
    m(P_{j,l}) = \frac{1}{n_l}[m(B(0,R_{l-1})) - m(B(0,R_l))] \geq \frac{1}{3}(\varepsilon d(p_{j,l},\T))^2 = \frac{4}{27 \pi}m(B_{j,l}).
\end{align*}
Thus, $P_{j,l}$ satisfies the hypotheses in Lemma~\ref{lem:whitney_set_under_bilip_map}. Now observe that $R_0 > 2$ implies 
\begin{align*}
    \mathbb{D}^-_1 \subset \cup_{l = 0}^\infty \cup_{j = 1}^{n_l}P_{j,l} \subset \cup_{l = 0}^\infty \cup_{j = 1}^{n_l}B_{j,l}.
\end{align*}
Thus, since $\varepsilon' = 3\mathcal{L}^2\varepsilon /2 \leq 1/4$ by assumption, we may apply  Lemma~\ref{lem:whitney_set_under_bilip_map} with 
\begin{gather*}
    \{x_j\}_{j = 1}^\infty = \cup_{l = 0}^\infty \{p_{j,l}\}_{j = 1}^{n_l}, \quad \{E_j\}_{j = 1}^\infty = \cup_{l = 0}^\infty \{P_{j,l}\}_{j = 1}^{n_l}, \\
    \varepsilon = \frac{3}{2}\varepsilon, \quad \text{and} \quad\sigma = \frac{4}{27\pi},
\end{gather*}
to obtain an $\varepsilon'$-fine Whitney set $\cup_{l = 0}^\infty \{\rho(p_{j,l})\}_{j = 1}^{n_l}$ for $\rho(\mathbb{D}^-_1)$ with respect to $\rho(\T)$. Since $q_{j,l} = \rho(p_{j,l})$, $\gamma = \rho(\T)$, and $\Omega^-_{1/\mathcal{L}} \subset \rho(\mathbb{D}^-_1)$, we conclude that $\cup_{l = 0}^\infty \{q_{j,l}\}_{j = 1}^{n_l}$ is an $\varepsilon'$-fine Whitney set for $\Omega^-_{1/\mathcal{L}}$ with respect to $\gamma$. This proves the statement made in Example~\ref{ex:explicit_whitney_partition}.
\end{appendices}
\section*{Data availability}
Data will be made available on request.
\bibliographystyle{plain}
\bibliography{ref}

\end{document}